\numberwithin{equation}{section}
\numberwithin{figure}{section}
\theoremstyle{plain}
\newtheorem{thm}{Theorem}
  \theoremstyle{plain}
  \numberwithin{thm}{section}
  \newtheorem{cor}[thm]{Corollary}
  \theoremstyle{plain}
  \newtheorem{lem}[thm]{Lemma}
  \theoremstyle{remark}
  \newtheorem{rem}[thm]{Remark}
    \theoremstyle{remark}
   \theoremstyle{plain}
  \def\Ddots{\mathinner{\mkern1mu\raise\p@
\vbox{\kern7\p@\hbox{.}}\mkern2mu
\raise4\p@\hbox{.}\mkern2mu\raise7\p@\hbox{.}\mkern1mu}}
\numberwithin{definition}{section}
\newtheorem{result}{Result}
\newcommand{\norm}[1]{\left\| #1 \right\|}
\newcommand{\eklm}[1]{\left\langle #1 \right\rangle}
\renewcommand{\d}{\,d}
\newcommand{\N}{{\mathbb N}}
\newcommand{\Z}{{\mathbb Z}}
\newcommand{\C}{{\mathbb C}}
\newcommand{\R}{{\mathbb R}}
\newcommand{\B}{{\mathcal B}}
\newcommand{\1}{{\bf 1}}
\renewcommand{\epsilon}{\varepsilon}
\renewcommand{\rho}{\varrho}
\newcommand{\bdm}{\begin{displaymath}}
\newcommand{\edm}{\end{displaymath}}
\newcommand{\bq}{\begin{equation}}
\newcommand{\eq}{\end{equation}}
\newcommand{\bqn}{\begin{equation*}}
\newcommand{\eqn}{\end{equation*}}
\newcommand{\Cinft}{{\rm C^\infty}}
\newcommand{\CT}{{\rm C^\infty_c}}
\newcommand{\Sob}{{\rm H}}
\renewcommand{\L}{{\rm L}}
\newcommand{\vol}{\text{vol}\,}
\DeclareMathOperator{\supp}{supp\,}
\DeclareMathOperator{\tr}{tr}
\DeclareMathOperator{\gd}{\partial}
\begin{document}
\author{Benjamin K\"uster}
\address{Philipps-Universit\"at Marburg, Fachbereich Mathematik und Informatik, Hans-Meerwein-Str., 35032 Marburg, Germany}
\email{\href{mailto:bkuester@mathematik.uni-marburg.de}{bkuester@mathematik.uni-marburg.de}}
\title{Semiclassical functional calculus for $h$-dependent functions}
\keywords{Semiclassical analysis, functional calculus, spectral theory}
\date{\today}

\begin{abstract}We study the functional calculus for operators of the form $f_h(P(h))$ within the theory of semiclassical pseudodifferential operators, where  $\{f_h\}_{h\in (0,1]}\subset \CT(\R)$ denotes a family of $h$-dependent functions satisfying some regularity conditions, and $P(h)$ is either an appropriate self-adjoint semiclassical pseudodifferential operator in $\L^2(\R^n)$ or a Schr\"odinger operator in $\L^2(M)$, $M$ being a closed Riemannian manifold of dimension $n$. The main result is an explicit semiclassical trace formula with remainder estimate that is well-suited for studying the spectrum of $P(h)$ in spectral windows of width of order $h^\delta$, where $0\leq \delta <\frac{1}{2}$.
\end{abstract}

\maketitle

\setcounter{tocdepth}{1}
\tableofcontents{}

\section{Introduction}
\subsection{Motivation and problem}\label{sec:motivgoalsI}
It is very well known that the functional calculus for unbounded self-adjoint operators combines with the symbolic calculus for semiclassical pseudodifferential operators in a useful way, a major class of examples being given by trace formulas, see \cite{dimassi-sjoestrand,helffer-robert83} for the $\R^n$ case and \cite{zworski} for the closed manifold case. However, while applying the functional calculus for semiclassical Schr\"odinger operators on a closed Riemannian manifold in the context of equivariant quantum ergodicity \cite{kuester-ramacher15a}, we noticed that in the literature the functions considered in the functional calculus are either supposed to be independent of the semiclassical parameter $h$, or the $h$-dependence is (implicitly) allowed but the results are not stated in an explicit enough form. Therefore, the main purpose of this paper is to prove explicit trace formulas for the functional calculus of Schr\"odinger operators on closed Riemannian manifolds with respect to $h$-dependent functions. 
To explain things more precisely, let $X$ be either the euclidean space $\R^n$ or a closed connected Riemannian manifold of dimension $n$, endowed with the Riemannian measure $dX$, and $P(h)$ be the self-adjoint extension of an essentially self-adjoint semiclassical pseudodifferential operator in $\L^2(X)$ with real-valued semiclassical principal symbol $p$. Given a function $f\in \CT(\R)$, and assuming that $p$ satisfies reasonable technical ellipticity conditions, $f(P(h))$ extends to a bounded semiclassical pseudodifferential operator on $\L^2(X)$ with semiclassical principal symbol $f\circ p$, see \cite{dimassi-sjoestrand,zworski}. Provided that one has chosen $f$ and $p$ appropriately such that $f(P(h))$ is of trace class, one obtains the asymptotic trace formula
\bq
(2\pi h)^n\tr_{\L^2(X)}f(P(h))=\intop_{T^*X} f\circ p \,\d (T^*X) + \mathrm{O}(h)\qquad \text{as }h\to 0, \label{eq:traceform00}
\eq
where $d(T^*X)$ is the volume form defined by the canonical symplectic form on the co-tangent bundle $T^*X$. Now, suppose we are in the situation that there are numbers $E\in \R$ and $c,\varepsilon>0$ such that for each $h\in (0,1]$ the following holds: $p^{-1}([E-\varepsilon,E+c+\varepsilon])$ is compact, $E$ and $E+c$ are regular values of $p$, and $P(h)$ has discrete spectrum in $[E-\varepsilon,E+c+\varepsilon]$ consisting of only finitely many eigenvalues $\{E_j(h)\}_{j\in J(h)\subset \N}$. Then, (\ref{eq:traceform00}) can be applied for each $f\in \CT([E-\varepsilon/2,E+c+\varepsilon/2])$, and by approximating the characteristic function of $[E,E+c]$ with such functions, one gets the semiclassical Weyl law 
\bq
(2\pi h)^n \# \big\{j\in J(h): E_j(h)\in [E,E+c]\big\}=\vol_{T^*X}\big(p^{-1}([E,E+c])\big)+\mathrm{o}(1)\label{eq:weyllaw1}
\eq
as $h\to 0$, see \cite[Cor.\ 9.7]{dimassi-sjoestrand} and \cite[Thm.\ 14.11]{zworski}. In general, it is more desirable to study spectral windows of shrinking width of the form $$[E,E+c(h)],\qquad \lim_{h\to 0}c(h)=0,$$
since in this case the leading term in Weyl-type formulas as above turns into an integral over the compact hypersurface $p^{-1}(\{E\})\subset T^*X$ with respect to the induced Liouville measure, and one may drop some technical hypotheses which would be necessary without the localization to such a hypersurface. However, this is not possible in the functional calculus approach sketched above, where the function $f$ is fixed and independent of $h$. \\

In this paper, we shall study in detail the semiclassical functional calculus for operators of the form $f_h(P(h))$ within the theory of semiclassical pseudodifferential operators, where $f_h\in \CT(\R)$ is explicitly allowed to depend on $h\in(0,1]$. \\

The largest class of $h$-dependent functions that we will consider is given by $\bigcup_{\delta\in[0,\frac{1}{2})}\mathcal{S}^{\mathrm{comp}}_{\delta}$, where for each $\delta\in[0,\frac{1}{2})$ the symbol $\mathcal{S}^{\mathrm{comp}}_{\delta}$ denotes the set of all families $\{f_h\}_{h\in (0,1]}\subset \CT(\R)$ such that 
\begin{itemize}
\item[(1)] $\{f_h\}_{h\in (0,1]}$ defines an element of the semiclassical symbol class $S_{\delta}(1_\R)$, meaning that 
\[
\norm{f_h^{(j)}}_\infty=\mathrm{O}(h^{-\delta j})\qquad\text{as }h\to 0,\quad j=0,1,2,\ldots, 
\]
where $f_h^{(j)}$ denotes the $j$-th derivative of $f_h$;
\item[(2)] the diameter of the support of $f_h$ does not grow faster than polynomially in $h^{-1}$ as $h\to 0$.
\end{itemize}
The second property means that there is some $N\geq0$ such that $\mathrm{diam}(\supp f_h)=\mathrm{O}(h^{-N})$ as $h\to 0$. This is a very mild technical condition; in usual applications the diameter of the support of $f_h$ will be bounded or even tend to zero as $h\to 0$. It is therefore convenient to introduce also the subset of $\mathcal{S}^{\mathrm{comp}}_{\delta}$ given by 
\bqn
\mathcal{S}^{\mathrm{bcomp}}_{\delta}:=\big\{\{f_h\}_{h\in (0,1]}\in\mathcal{S}^{\mathrm{comp}}_{\delta}: \exists \text{ compact interval }I\subset \R\text{ with }\supp f_h\subset I\;\,\forall\;h\in (0,1]\big\}.
\eqn
The class $\mathcal{S}^{\mathrm{bcomp}}_{\delta}$ is technically easier to handle and we stress again that the loss of generality from $\mathcal{S}^{\mathrm{comp}}_{\delta}$ to $\mathcal{S}^{\mathrm{bcomp}}_{\delta}$ seems to be irrelevant to most applications. In the following, we will use a shorter notation and just write $f_h\in \mathcal{S}^{\mathrm{bcomp}}_{\delta}$ or $f_h\in \mathcal{S}^{\mathrm{comp}}_{\delta}$.
\subsection{Goals and methods}
We pursue two main goals. The first is to prove that $f_h(P(h))$ is a semiclassical pseudodifferential operator, provided that $f_h\in\mathcal{S}^{\mathrm{comp}}_{\delta}$ and $P(h)$ is the self-adjoint extension of an appropriate essentially self-adjoint semiclassical pseudodifferential operator in $\R^n$. This involves relating the abstract functional calculus to the semiclassical symbolic calculus with suitable estimates. Operator norm and trace norm estimates have been carried out very precisely for appropriate classes of operators in \cite{helffer-robert83} and \cite{dimassi-sjoestrand}, so that, thanks to the strong existing literature on the $\R^n$ case, it is not hard to achieve our first goal. We mainly work out certain critical steps in existing proofs in a more explicit form so that they generalize to the classes $\mathcal{S}^{\mathrm{comp}}_{\delta}$. The main issues here are estimates that are uniform in $h$ for fixed $f$, but which depend on $f$ and hence are no longer uniform in $h$ when $f=f_h$. Our second goal is then to provide a detailed treatise of the semiclassical functional calculus for Schr\"odinger operators on a closed connected $n$-dimensional Riemannian manifold $M$ of the form\bq
P(h)=-h^2\Delta +V,\qquad V\in \Cinft(M,\R),\qquad P(h):\Sob^2(M)\to \L^2(M),\label{eq:schrop}
\eq
where $V$ is a real-valued potential, $\Delta$ is the unique self-adjoint extension of the Laplace-Beltrami operator $\breve \Delta: \Cinft(M)\to \Cinft(M)\subset \L^2(M)$,  and $\Sob^2(M)$ denotes the second Sobolev space. Thus, $P(h)$ is the unique self-adjoint extension of the essentially self-adjoint operator
\bqn
\breve{P}(h):=-h^2\breve \Delta + V,\qquad \breve{P}(h):\Cinft(M)\to \Cinft(M)\subset \L^2(M).
\eqn
As is well known,  the spectrum of $P(h)$ is discrete for each $h\in(0,1]$ and accumulates only at $+\infty$, see \cite[Chapter 14]{zworski}. We write $$p(x,\xi):=\norm{\xi}^2_x+V(x),\qquad p:T^*M\to \R,$$ for the Hamiltonian function associated to $P(h)$, which represents its semiclassical principal symbol. Apart from establishing that $f_h(P(h))$ is a semiclassical pseudodifferential operator when $f_h\in \mathcal{S}^{\mathrm{comp}}_{\delta}$, we are interested in an explicit expression for $f_h(P(h))$ in terms of local quantizations of symbol functions in $\R^n$, which can be used flexibly to prove new semiclassical trace formulas that are well-suited for studying spectral windows of width of order $h^\delta$, where $0\leq\delta<\frac{1}{2}$. This involves relating the abstract functional calculus to the local semiclassical symbolic calculus with trace norm remainder estimates, which we do under the assumption that $f_h\in \mathcal{S}^{\mathrm{bcomp}}_{\delta}$. In contrast to the euclidean case, there seems to be no reference in the literature where for a Schr\"odinger operator $P(h)$ on a closed manifold the transition from the global operator $f_h(P(h))$ to the locally defined quantizations, obtained by introducing an atlas and a partition of unity, is made in a way such that the \emph{trace norm} of the remainder operators is precisely controlled, not even if $f_h$ is actually independent of $h$. 
\subsection{Summary of main results} In what follows, the results of this paper are presented in a slightly condensed form. In the first two results, which are preliminaries for the third result, we consider semiclassical pseudodifferential operators in $\L^2(\R^n)$, with the notation $\overline{A}$ for the self-adjoint extension of an essentially self-adjoint operator $A$, and $\mathrm{Op}_h$ for the Weyl quantization. For the other required definitions, in particular those of \emph{order functions} and the associated notion of ellipticity, the symbol classes $S^k_{\delta}(\mathfrak{m})$ and $S^m_{\delta}(M)$, and the operator classes $\Psi^m_{\delta} (M)$, we refer the reader to Section \ref{subsec:semiclassical}. To state our results, fix a number $\delta\in [0,\frac{1}{2})$. The main result of Section \ref{sec:resultsrn} is

\begin{result}[{Theorem \ref{thm:semiclassfunccalcrn},  $h$-dependent version of \cite[Theorem 8.7]{dimassi-sjoestrand}}]\label{res:1} 
Let  $\mathfrak{m}:\R^{2n}\to (0,\infty)$ be an order function with $\mathfrak{m}\geq 1$, and let $s\in S(\mathfrak{m})$ be a real-valued symbol function such that $s+i$ is $\mathfrak{m}$-elliptic. Choose $f_h\in \mathcal{S}^{\mathrm{comp}}_{\delta}$. Then, for small $h$ the operator $f_h\big(\overline{\mathrm{Op}_h(s)}\big):\L^2(\R^n)\to \L^2(\R^n)$ is a semiclassical pseudodifferential operator. More precisely, there is a symbol function $a\in \bigcap_{k\in \N}S_{\delta}(\mathfrak{m}^{-k})$ and a number $h_0\in (0,1]$ such that for $h\in (0,h_0]$
\[
f_h\Big(\overline{\mathrm{Op}_h(s)}\Big)=\mathrm{Op}_h(a).
\]
Moreover, if $s$ has an asymptotic expansion in $S(\mathfrak{m})$ of the form $
s\sim \sum_{j=0}^\infty h^j s_j$, then $a$ has an expansion in $S_{\delta}(1/\mathfrak{m})$, with explicitly known coefficients, of the form
\bq
a\sim \sum_{j=0}^\infty a_j,\qquad a_j\in S_{\delta}^{j(2\delta-1)}(1/\mathfrak{m}),\qquad a_0(y,\eta,h)=f_h(s_0(y,\eta,h)).\label{eq:expansion333333}
\eq
\end{result}
The second result concerns the Schr\"odinger operator (\ref{eq:schrop}) on a closed connected Riemannian manifold $M$ of dimension $n$. We obtain in Section \ref{sec:closedriem}:
\begin{result}[{Theorem \ref{thm:semiclassfunccalcmfld}, $h$-dependent version of \cite[Theorem 14.9]{zworski}}]\label{res:partIresult2}Choose a function $\rho_h\in \mathcal{S}^{\mathrm{comp}}_{\delta}$. Then, for small $h$ the operator $\rho_h(P(h))$ is a semiclassical pseudodifferential operator on $M$ of order $(-\infty,\delta)$, and its principal symbol is represented by the function $\rho_h\circ p$.
\end{result}
In order to prove trace formulas for a semiclassical pseudodifferential operator on a manifold, a standard approach is to approximate the operator by pullbacks of semiclassical pseudodifferential operators in $\R^n$ by introducing an atlas and a partition of unity, up to a trace class remainder operator with small trace norm. In addition, one would like to localize the leading term in the obtained trace formulas using an operator $B\in \Psi^0_{\delta}(M)$ with principal symbol represented by a symbol function $b\in S^0_{\delta}(M)$. Thus, let us introduce a finite atlas $$\{U_\alpha,\gamma_\alpha\}_{\alpha \in \mathcal{A}},\qquad \gamma_\alpha: U_\alpha\stackrel{\simeq}{\to} \R^n,\qquad U_\alpha\subset M\;\text{open}.$$
We choose the whole euclidean space $\R^n$ as the image of our charts in order to avoid problems related to the fact that pseudodifferential operators are non-local. Furthermore, in order to state our next result we require a smooth partition of unity $\{\varphi_\alpha\}_{\alpha \in \mathcal{A}}$ on $M$  subordinate to $\{U_\alpha\}_{\alpha \in \mathcal{A}}$ and for each $\alpha\in\mathcal{A}$ an associated triple of cutoff functions $\overline \varphi_\alpha,\overline{\overline \varphi}_\alpha,\overline{\overline {\overline \varphi}}_\alpha\in \CT(U_\alpha)$ with $$\overline \varphi_\alpha\equiv 1\text{ on }\supp  \varphi_\alpha,\qquad \overline{\overline \varphi}_\alpha\equiv1\text{ on }\supp  \overline \varphi_\alpha,\qquad \overline{\overline {\overline \varphi}}_\alpha\equiv 1\text{ on }\supp\overline {\overline \varphi}_\alpha.$$  For each chart, define a local symbol function
\bqn
u_{\alpha,0}(y,\eta,h):=\big((\rho_h\circ p)\cdot b\big)\big(\gamma_\alpha^{-1}(y),(\partial\gamma_\alpha^{-1})^T\eta,h\big)\cdot\varphi_\alpha\big(\gamma_\alpha^{-1}(y)\big)
\eqn
where $(y,\eta)\in\R^{2n},\; \alpha\in\mathcal{A},\; h\in (0,1]$. Then, one has the following
\begin{result}[Theorem \ref{thm:localtraceform}]\label{res:3}Suppose that $\rho_h\in \mathcal{S}^{\mathrm{bcomp}}_{\delta}$. Then, for each $N\in \N$, there is a number $h_0\in(0,1]$, a collection of symbol functions $\{r_{\alpha,\beta,N}\}_{\alpha,\beta\in \mathcal{A}}\subset S^{2\delta-1}_{\delta}(1_{\R^{2n}})$ and an operator $\mathfrak{R}_{N}(h)\in \B(\L^2(M))$ such that
\begin{itemize}
\item one has for all $f\in \L^2(M)$, $h\in (0,h_0]$ the relation \begin{multline}
B\circ \rho_h(P(h))(f)=\sum_{\alpha \in \mathcal{A}}\overline \varphi_\alpha\cdot\mathrm{Op}_h(u_{\alpha,0})\big((f\cdot \overline{\overline {\overline \varphi}}_\alpha)\circ \gamma_\alpha^{-1}\big)\circ\gamma_\alpha\\
+\sum_{\alpha,\beta \in \mathcal{A}}\overline \varphi_\beta\cdot\mathrm{Op}_h(r_{\alpha,\beta,N})\big((f\cdot \overline{\overline \varphi}_\alpha\cdot\overline {\overline {\overline \varphi}}_\beta)\circ\gamma_\beta^{-1}\big)\circ\gamma_\beta  \;+\; \mathfrak{R}_{N}(h)(f);\label{eq:result1formula1214}
\end{multline}

\item the operator $\mathfrak{R}_N(h)\in \B(\L^2(M))$ is of trace class and its trace norm fulfills
\bqn
\norm{\mathfrak{R}_N(h)}_{\mathrm{tr},\L^2(M)}=\mathrm{O}\big(h^{N}\big)\quad\text{as }h\to 0;
\eqn
\item for fixed $h\in (0,h_0]$, each symbol function $r_{\alpha,\beta,N}$ is an element of $\CT(\R^{2n})$  that fulfills 
\bqn
\supp  r_{\alpha,\beta,N}\subset \supp   \big((\rho_h\circ p)\cdot b \cdot\varphi_\alpha\big)\circ(\gamma_\alpha^{-1},(\partial\gamma_\alpha^{-1})^T).
\eqn
\end{itemize}
\end{result}

\subsection{Applications}
In general,  Result \ref{res:3} can be used to prove an asymptotic semiclassical trace formula with non-trivial remainder estimates for an operator of the form
\[
T\circ B\circ \rho_h(P(h))
\]
where $T:\L^2(M)\to \L^2(M)$ is some bounded, explicitly known operator. For example, $T$ can be defined using an additional structure on the manifold $M$. Then, by Result \ref{res:3}, one has for each $N\in \N$
\[
\mathrm{tr}_{\L^2(M)}\big[T\circ B\circ \rho_h(P(h))\big]=\mathrm{tr}_{\L^2(M)}\big(T\circ L_N\big) + \mathrm{O}\big(h^N\big)\qquad \text{as }h\to 0,
\]
where $L_N$ is the operator defined by the right hand side of (\ref{eq:result1formula1214}) without $\mathfrak{R}_N(h)(f)$. The significance of Result \ref{res:3} is that proving a trace formula for $T\circ B\circ \rho_h(P(h))$ with remainder of order $h^N$ immediately reduces to calculating the leading term $\mathrm{tr}_{\L^2(M)}(T\circ L_N)$, and this term involves only pullbacks of semiclassical pseudodifferential operators in $\R^n$, so that in the calculations one can rely on the precise symbolic calculus on $\R^n$ and needs to deal only with compactly supported symbol functions.

In the simplest case where $T=B=\1_{\L^2(M)}$, Corollary \ref{thm:semiclassicaltraceformula} yields the $h$-dependent analogue of (\ref{eq:traceform00}) given by
\bqn
(2\pi h)^n\,\mathrm{tr}_{\L^2(M)}\rho_h\left(P(h)\right)=\intop_{T^*M}\rho_h\circ p\,\d(T^*M)+\mathrm{O}\Big(h^{1-2\delta}\,\mathrm{vol}_{\,T^*M}\big(\supp \rho_h\circ p\big)\Big)\quad \text{as }h\to 0.
\eqn
Provided that $\delta<\frac{1}{3}$, this leads directly to an improved version of (\ref{eq:weyllaw1}) given by
\bq
(2\pi)^n h^{n-\delta} \# \big\{j\in J(h): E_j(h)\in [E,E+h^\delta]\big\}=\vol \,p^{-1}(\{E\})+\mathrm{O}\big(h^\delta+h^{\frac{1}{3}-\delta}\big),\label{eq:weyllaw2}
\eq
where the volume is now measured using the induced Liouville measure on $p^{-1}(\{E\})$, compare \cite[proof of Thm.\ 4.1]{kuester-ramacher15a}. Of course, formula (\ref{eq:weyllaw2}) is far from optimal in terms of its quantitative statement (see Subsection \ref{subsec:prevknown1} below), yet it serves as a simple example of the qualitative fact that due to the localization onto the hypersurface $p^{-1}(\{E\})$ it is now enough to assume that $p^{-1}([E-\varepsilon,E+\varepsilon])$ is compact for some small $\varepsilon>0$ and that $E$ is a regular value of $p$, i.e.\ the $c=0$ version of the assumptions required for (\ref{eq:weyllaw1}). 

When choosing $\delta>0$, one can use the operator $B$ to perform a localization to small subsets (for example, single points or geodesics) of $T^*M$ in the semiclassical limit. This  is the small-scale approach, see \cite{han}. By choosing $T$ to be the projection onto a linear subspace $V$ of $\L^2(M)$ and then taking into account the equality $$\mathrm{tr}\big[T\circ B\circ \rho_h(P(h))\big]=\mathrm{tr}\big[B\circ \rho_h(P(h))\circ T\big]=\mathrm{tr}\big[T\circ B\circ \rho_h(P(h))\circ T\big],$$one can use Result $3$ to study the spectral properties of the bi-restriction of $P(h)$ to $V$, still possibly localizing the problem using $B$. 
As a major application, we use Result \ref{res:3} in \cite[Thm.\ 3.1]{kuester-ramacher15a} to prove a singular equivariant semiclassical trace formula for Schr\"odinger operators in case that $M$ carries an isometric effective action of a compact connected Lie group $G$. There, one has $T=T_\chi$, where $T_\chi:\L^2(M)\to \L^2(M)$ is the projection onto an isotypic component of the left-regular $G$-representation in $\L^2(M)$ associated to a character $\chi\in \widehat G$. The calculation of $\mathrm{tr}_{\L^2(M)}(T_\chi\circ L_N)$ reduces to the evaluation of certain oscillatory integrals which can be carried out using a formula from \cite{ramacher10} whose remainder term is of lower order than that in Result \ref{res:3}. Here, knowing a better remainder estimate in Result \ref{res:3} would not improve the results, so in this case Result \ref{res:3} is fully sufficient both qualitatively and quantitatively. The trace formula stated in \cite[Thm.\ 3.1]{kuester-ramacher15a} could not be established without a functional calculus for $h$-dependent functions. It implies a generalized equivariant semiclassical Weyl law with remainder estimate, as well as a symmetry-reduced quantum ergodicity theorem, see \cite{kuester-ramacher15a,kuester-ramacher15b}. 
\subsection{Previously known results}\label{subsec:prevknown1}As mentioned above, an $h$-dependent functional calculus of the form $f_h(P(h))$ has been used in the literature before, but not systematically and usually only implicitly. For example, consider a Schwartz function $\chi:\R\to \R$ whose Fourier transform has compact support. Then a common approach in the literature is to study the operator $\chi\big(\frac{P(h)-E}{h}\big)$ in the context of semiclassical Fourier integral operators, $E\in \R$ being a fixed regular value of $p$. Writing $f^E_h(x):=\chi\big(\frac{x-E}{h}\big)$, this amounts to studying $f^E_h(P(h))$, as in the semiclassical Gutzwiller trace formula  \cite{combescure,gutzwiller} and in the proof of the semiclassical quantum ergodicity theorem in \cite{dyatlov}. Using the same techniques, one can also prove a semiclassical Weyl law for the smallest possible spectral window $[E,E+h]$ with the best possible $\mathrm{O}(h)$-remainder, see \cite{dimassi-sjoestrand,dyatlov,hoermander68}.  However, these techniques are considerably more involved than the simple semiclassical pseudodifferential operator calculus. Another way in which the abstract functional calculus has been used for $h$-dependent functions in the literature is of a very basic form.  Namely, given 
a family $\{f_h\}_{h\in (0,1]}$ of bounded Borel functions on $\R$ with uniformly bounded supremum norms for $h\in(0,1]$, one can use the fact that $f_h(P(h))$ has uniformly bounded operator norm as $h\to 0$, an estimate which follows directly from the spectral theorem. In particular, one considers $f_h(P(h))$ only abstractly as an $h$-dependent bounded operator, and not concretely as a semiclassical pseudodifferential operator or Fourier integral operator. See e.g.\ \cite[Proof of Lemma 3.11]{dyatlov}.
\subsection{Discussion of methods and outlook}\quad Developing the functional calculus for $h$-dependent functions within the theory of semiclassical pseudodifferential operators restricts the applications in spectral analysis to spectral windows of width of order $h^\delta$ with $\delta<\frac{1}{2}$. In particular, the best possible case $\delta=1$ cannot be studied. However, qualitatively there is no significant difference between the cases $\delta=1$ and $\delta>0$, since for any $\delta>0$ the spectral window $[E,E+h^\delta]$ shrinks to a point polynomially fast in the semiclassical limit, leading to a localization on an energy hypersurface in Weyl-type formulas, and if the manifold dimension is greater than $1$, then by Weyl's law the number of eigenvalues in $[E,E+h^\delta]$ grows as $h\to 0$, regardless whether $\delta=1$ or just $\delta>0$. Thus, going beyond the theory of semiclassical pseudodifferential operators could only lead to quantitative improvements, at the expense of losing the simplicity of the symbolic calculus. Although non-optimal, the quantitative results presented here are sufficient for many applications, as outlined above. An obvious possible future line of research consists in studying an explicit functional calculus for $h$-dependent functions within more general semiclassical frameworks of operators, as the theory of semiclassical Fourier integral operators. This will probably yield improved quantitative results.
\subsection{Acknowledgements}
The author would like to thank Maciej Zworski for insightful conversations and Pablo Ramacher for his careful proofreading and many helpful comments.
\section{Preliminaries and Background}\label{sec:prelim}

\subsection{Notation} For two complex vector spaces $V,W$, we write $\mathcal{L}(V,W)$ for the $\C$-linear maps $V\to W$. If $V,W$ are normed vector spaces, we write $\B(V,W)$ for the bounded linear operators $V\to W$ and we set $\B(V):=\B(V,V)$. By a \emph{closed} Riemannian manifold, we mean a compact Riemannian manifold without boundary. 
If $\varphi\in \CT(U)$, where $U$ is an open subset of a smooth manifold $X$, we consider $\varphi$ as a function on $X$ without mentioning the extension by zero explicitly. Similarly, we sometimes consider a function in $\Cinft(X)$ as an element of $\Cinft(T^*X)$ without explicitly mentioning the composition with the cotangent bundle projection. For a chart $\gamma:U\to V$, $V\subset \R^n$, and a function $f\in \Cinft(T^*X)$, we write $f\circ(\gamma^{-1},(\partial\gamma^{-1})^T)$ for the composition of $f$ with $\gamma^{-1}$ in the manifold variable and the adjoint of its derivative in the cotangent space variable. In general, $1_S$ denotes the function with constant value $1$ on a set $S$.

\subsection{Semiclassical analysis}\label{subsec:semiclassical}
In what follows, we shall briefly recall the theory of semiclassical symbol classes and pseudodifferential operators on $\R^n$ and on general smooth manifolds. For a detailed introduction, we refer the reader to \cite[Chapters 9
and 14]{zworski} and \cite[Chapters 7 and 8]{dimassi-sjoestrand}.  Semiclassical analysis developed out of the theory of pseudodifferential operators, a thorough exposition of which can be found in \cite{shubin}. 
An important feature that distinguishes semiclassical analysis from usual pseudodifferential operator theory is that instead of the usual symbol functions and corresponding operators, one considers families of symbol functions and pseudodifferential operators indexed by a global parameter \[h\in (0,1].\]
Essentially, the definitions of those families are obtained from the usual definitions by substituting in the symbol functions the co-tangent space variable $\xi$ by $h\xi$. To begin, recall that a Lebesgue-measurable function $\mathfrak{m}:\R^{n}\to (0,\infty)$ is called \emph{order function} if there are constants $C,N>0$ such that
\[
\mathfrak{m}(v_1)\leq C\eklm{v_1-v_2}^N\mathfrak{m}(v_2)\quad \forall\; v_1,v_2\in \R^{n}.
\]
Here we used the notation $\eklm{v}:=\sqrt{1+\Vert v\Vert^2}$. If $\mathfrak{m}_1,\mathfrak{m}_2$ are order functions, then $\mathfrak{m}_1\mathfrak{m}_2$ is also an order function. For example, $\mathfrak{m}=1_{\R^n}$  and $\mathfrak{m}(v)=\eklm{v}^k$, $k>0$, are order functions. Let $\mathfrak{m}:\R^{n}\to (0,\infty)$ be an order function. For $\delta\in [0,\frac{1}{2})$ and $k\in \R$, we define the semiclassical symbol class $S^k_{\delta}(\mathfrak{m})$ as the set of all functions $s:\R^{n}\times(0,1]\to \C$ such that $ s(\cdot,h)\in \Cinft(\R^{n})$ for each $h\in (0,1]$ and for each non-negative $n$-dimensional multiindex $\alpha$, there is a constant $C_{\alpha,\delta,k}>0$ with
\bq
|\partial_v^\alpha s(v,h)|\leq C_{\alpha,\delta,k} \,\mathfrak{m}(v)\,h^{-\delta |\alpha|-k}\quad \forall\; (v,h)\in \R^{n}\times (0,1].\label{eq:symbclass7834}
\eq
We write
\[
S^k_h(\mathfrak{m}):=S_{0}^k(\mathfrak{m}),\qquad  S_{\delta}(\mathfrak{m}):=S^0_{\delta}(\mathfrak{m}),\qquad  S(\mathfrak{m}):=S_{0}^0(\mathfrak{m}).
\]
We call an element of a semiclassical symbol class a \emph{symbol function}. Furthermore, let us define
\[
S^{-\infty}(\mathfrak{m}):=\bigcap_{k\in \R}  S^k_{\delta}(\mathfrak{m}),\quad\delta\in [0,1/2)\text{ arbitrary}.
\]
This set is in fact well-defined (the intersection on the right hand side is independent of $\delta$). $S^{-\infty}(\mathfrak{m})$ is the set of functions $s:\R^{n}\times(0,1]\to \C$ such that $ s(\cdot,h)\in \Cinft(\R^{n})$ for each $h\in (0,1]$ and for each non-negative $n$-dimensional multiindex $\alpha$ and each $N\in \N$, there is a constant $C_{\alpha,N}>0$ with
\[
|\partial_v^\alpha s(v,h)|\leq C_{\alpha,N} \,\mathfrak{m}(v)\,h^N\quad \forall\; (v,h)\in \R^{n}\times (0,1].
\]
In order to recall the definition of semiclassical asymptotic series, let $\mathfrak{m}:\R^{n}\to (0,\infty)$ be an order function. Given $\delta\in [0,\frac{1}{2})$, a sequence $\{k_j\}_{j\in \N}\subset \R$ with $k_j\to -\infty$ as $j\to \infty$, a sequence $\{s_j\}_{j\in \N}$ with $s_j\in S^{k_j}_{\delta}(\mathfrak{m})$, and a symbol function $s\in S_{\delta}(\mathfrak{m})$, we say that $\{s_j\}_{j\in \N}$ is \emph{asymptotic to} $s$ in $S_{\delta}(\mathfrak{m})$, in short
\[
s\sim \sum_{j=0}^\infty s_j\quad\text{in }\,S_{\delta}(\mathfrak{m}),
\]
provided that for each $N\in \N$ one has 
$
s- \sum_{j=0}^N s_j\in S^{k_{N+1}}_{\delta}(\mathfrak{m}).
$
We denote by $\mathcal{S}(\R^n)$ the vector space of Schwartz functions on $\R^n$, equipped with the semi-norms
\[
|f|_{\alpha,\beta}:=\sup_{x\in \R^n}|x^\alpha \partial^\beta f(x)|
\]
and denote by $\mathcal{S}'(\R^n)$ the topological dual space of $\mathcal{S}(\R^n)$, i.e.\ the space of continuous linear functionals on $\mathcal{S}(\R^n)$, equipped with the weak-$\ast$ topology. Let $\mathfrak{m}:\R^{2n}\to (0,\infty)$ be an order function. For $s\in S^k_{\delta}(\mathfrak{m})$,  $f\in \mathcal{S}(\R^n)$, and $x\in \R^n$, define
\bq
\mathrm{Op}_h(s)(f)(x):=\frac 1 {(2\pi h)^n} \intop_{\R^n} \intop_{\R^n} e^{\frac i h (x-y)\cdot\eta}
s\Big(\frac{x+y}{2},\eta,h\Big) f(y) \d y \d \eta.\label{eq:defweyl}
\eq
Then, by \cite[Theorem 4.16]{zworski} and \cite[Theorem 7.8]{dimassi-sjoestrand}, the function $$\mathrm{Op}_h(s)(f): x\mapsto \mathrm{Op}_h(s)(f)(x)$$ is an element of $\mathcal{S}(\R^n)$ and the map
\[
\mathrm{Op}_h(s): \mathcal{S}(\R^n)\to \mathcal{S}(\R^n),\qquad f\mapsto \mathrm{Op}_h(s)(f),
\]
is a continuous linear operator. Moreover, by duality $\mathrm{Op}_h(s)$ extends to a continuous linear operator
\[
\mathrm{Op}_h(s): \mathcal{S}'(\R^n)\to \mathcal{S}'(\R^n).
\]
This so-called \emph{Weyl-quantization} is motivated by the fact that the
classical Hamiltonian $H(x,\xi)=\xi^2$ should correspond to the quantum Laplacian
$-h^2 \Delta$, and that real-valued symbol functions should correspond to symmetric or, more desirably, essentially self-adjoint operators. An operator $\mathcal{S}'(\R^n)\to \mathcal{S}'(\R^n)$ of the form (\ref{eq:defweyl}) is called a \emph{semiclassical pseudodifferential operator} on $\R^n$. We denote by $\mathrm{Op}_h(S^k_{\delta}(\mathfrak{m}))$ the set of semiclassical pseudodifferential operators that are quantizations of symbol functions in $S^k_{\delta}(\mathfrak{m})$. For the following important formula, we introduce the standard symplectic form $\sigma:\R^{2n}\times \R^{2n}\to \R$ given by $\sigma(x,\xi;y,\eta):=\xi\cdot y-x\cdot \eta$.
\begin{thm}[{Composition formula, \cite[Theorem 7.9]{dimassi-sjoestrand}}]\label{thm:compositionformula}
Let $\mathfrak{m}_1, \mathfrak{m}_2:\R^{2n}\to (0,\infty)$ be order functions and $s_j\in S_{\delta}(\mathfrak{m}_j)$. Then, there is a symbol function $s\in S_{\delta}(\mathfrak{m}_1\mathfrak{m}_2)$ such that $\mathrm{Op}_h(s_1)\circ \mathrm{Op}_h(s_2)=\mathrm{Op}_h(s)$, and
\bq
s\sim \sum_{k=0}^\infty \frac{1}{k!}\Big(\frac{ih}{2}\sigma(D_x,D_\xi;D_y,D_\eta)\Big)^k s_1(x,\xi,h)s_2(y,\eta,h)|_{y=x,\eta=\xi}\quad \text{ in }S_{\delta}(\mathfrak{m}_1\mathfrak{m}_2).\label{eq:composexpans}
\eq
\end{thm}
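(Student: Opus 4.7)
The plan is to follow the classical proof of the Weyl composition formula, adapting each step to track the losses of the form $h^{-\delta|\alpha|}$ inherent in the $\delta$-symbol classes. First I would write out $\mathrm{Op}_h(s_1)\circ\mathrm{Op}_h(s_2)(f)$ explicitly as a four-fold oscillatory integral using formula (\ref{eq:defweyl}), and perform changes of variables (a midpoint substitution followed by $y,z,\eta,\xi \mapsto x+u_1, x+v_1, \xi+u_2, \xi+v_2$) in order to exhibit the composition as $\mathrm{Op}_h(s)(f)$ with the twisted (Moyal) product
\[
s(x,\xi,h) \;=\; \frac{1}{(\pi h)^{2n}} \iint e^{-\frac{2i}{h}\sigma(u_1,u_2;\,v_1,v_2)}\, s_1(x+u_1,\xi+u_2,h)\, s_2(x+v_1,\xi+v_2,h)\,du\,dv.
\]

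Next I would verify that this oscillatory integral defines an element of $S_\delta(\mathfrak{m}_1\mathfrak{m}_2)$. Integration-by-parts identities derived from the non-degenerate quadratic phase $\sigma(u;v)$ make the integrand rapidly decreasing in $(u,v)$ away from the origin, and the required semi-norm estimates on $s$ then follow from the defining order function inequality $\mathfrak{m}_j(x+\cdot)\leq C\langle\cdot\rangle^N\mathfrak{m}_j(x)$ combined with the $S_\delta$-estimates (\ref{eq:symbclass7834}) for $s_1$ and $s_2$; the crucial accounting step is to check that the factors $h^{-\delta|\alpha|}$ produced by each differentiation integrate against a sufficient polynomial decay factor $\langle u,v\rangle^{-M}$.

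To obtain the asymptotic expansion, I would apply the method of stationary phase (equivalently, Taylor-expand $e^{\frac{ih}{2}\sigma(D_x,D_\xi;D_y,D_\eta)}$). The phase has the unique non-degenerate critical point $u=v=0$, and its Hessian reproduces $\sigma$ up to a constant, so Taylor-expanding the amplitude $s_1s_2$ at this critical point yields precisely the formal series in (\ref{eq:composexpans}). To see that the remainder after $N$ terms lies in $S_\delta^{k_{N+1}}(\mathfrak{m}_1\mathfrak{m}_2)$ for an appropriate sequence $k_{N+1}\to-\infty$, one observes that each application of $h\,\sigma(D_x,D_\xi;D_y,D_\eta)$ gains one power of $h$ from the phase but loses up to $h^{-2\delta}$ from the two derivatives hitting symbols in $S_\delta$, producing a net gain of $h^{1-2\delta}$ per term; this is exactly the statement that the remainder improves in the $S_\delta$ scale, and it is the place where the restriction $\delta<\tfrac{1}{2}$ becomes essential.

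The main obstacle is the quantitative control of the stationary phase remainder: one must bound the integral Taylor remainder uniformly in $h$ while tracking the cumulative $h^{-2\delta}$ losses, and verify that integration by parts against the quadratic phase $\sigma$ produces the decay needed to close the symbol estimates. Once this bookkeeping is carried out, the symbol $s$ is determined modulo $S^{-\infty}(\mathfrak{m}_1\mathfrak{m}_2)$ by Borel summation of the asymptotic series, and the identity $\mathrm{Op}_h(s_1)\circ\mathrm{Op}_h(s_2)=\mathrm{Op}_h(s)$ holds on $\mathcal{S}(\R^n)$ by construction, extending to $\mathcal{S}'(\R^n)$ by duality.
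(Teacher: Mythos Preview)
The paper does not supply its own proof of this statement: Theorem~\ref{thm:compositionformula} is quoted verbatim as a known result from \cite[Theorem 7.9]{dimassi-sjoestrand} in the preliminaries section and is used as a black box thereafter. There is therefore nothing in the paper to compare your proposal against.

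That said, your outline is the standard route to the Weyl composition formula in the $\delta$-calculus and matches the argument in Dimassi--Sj\"ostrand to which the paper defers: write the composition as an oscillatory integral with quadratic phase $\sigma$, establish the $S_\delta(\mathfrak{m}_1\mathfrak{m}_2)$ bounds by integration by parts using the order-function inequality, and obtain the expansion by stationary phase, keeping track of the net $h^{1-2\delta}$ gain per term. Your identification of the key technical point---uniform control of the stationary-phase remainder with the $h^{-2\delta}$ losses---is accurate, and the observation that $\delta<\tfrac12$ is what makes the series asymptotic is exactly right.
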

Let $s\in S^k_{\delta}(1_{\R^{2n}})$. Then, by \cite[Theorem 7.11]{dimassi-sjoestrand}, the operator $\mathrm{Op}_h(s): \mathcal{S}'(\R^n)\to \mathcal{S}'(\R^n)$ bi-restricts\footnote{Here, we are regarding $\L^2(\R^n)$ as a subset of $\mathcal{S}'(\R^n)$.} to a bounded linear operator $\mathrm{Op}_h(s)\in \B(\L^2(\R^n))$ which is essentially given by (\ref{eq:defweyl}), and there is a constant $C>0$ which is independent of $h$, such that
\bq
\norm{\mathrm{Op}_h(s)}_{\B(\L^2(\R^n))}\leq Ch^{-k}\quad\forall\; h\in (0,1].\label{eq:l2cont45}
\eq
It will be important for us to know when a semiclassical pseudodifferential operator is of trace class, and to estimate its trace norm. For these tasks, the following results from \cite[p.\ 113, Lemma 9.3, Theorem 9.4]{dimassi-sjoestrand} are very useful. Let $s\in S^k_{\delta}(1_{\R^{2n}})$ for some $\delta\in [0,\frac{1}{2})$, $k\in \R$, and suppose that the function $s(\cdot,h):\R^{2n}\to \C$, $(y,\eta)\mapsto s(y,\eta,h)$ fulfills 
\[
\sum_{|\alpha|\leq 2n+1}\norm{\partial^\alpha s(\cdot,h)}_{\L^1(\R^{2n})}<\infty.
\]
Then, the operator $\mathrm{Op}_h(s):\L^2(\R^n)\to \L^2(\R^n)$ is of trace class with trace norm
\[
\norm{\mathrm{Op}_h(s)}_{\mathrm{tr},\L^2(\R^n)}\leq C h^{-n}\sum_{|\alpha|\leq 2n+1}\norm{\partial^\alpha s(\cdot,h)}_{\L^1(\R^{2n})},
\]
where $C>0$ is independent of $h$, and its trace is given by
\bq
\mathrm{tr}\,_{\L^2(\R^n)}\mathrm{Op}_h(s)= \frac{1}{(2\pi h)^n}\intop_{\R^{2n}}s(y,\eta,h)\d y \d \eta.\label{eq:trace11111111}
\eq
Moreover, the integral kernel estimates on \cite[p.\ 113]{dimassi-sjoestrand} and the estimate proved there for the relation between standard quantization and Weyl quantization imply the following results. Choose $\phi\in \CT(\R^n)$ with support inside some compact set $K\subset \R^n$, and denote the operator $\L^2(\R^n)\to \L^2(\R^n)$ given by pointwise multiplication with $\phi$ by $\Phi$.
\begin{itemize}
\item Suppose that the function $s(y,\cdot,h): \eta\mapsto s(y,\eta,h)$ is a Schwartz function for each $y\in \R^n$ and each $h\in (0,1]$. Then, the operator $\Phi\circ \mathrm{Op}_h(s)$ is of trace class and its trace norm fulfills uniformly for $h\in (0,1]$ the estimate
\bq
\norm{\Phi\circ \mathrm{Op}_h(s)}_{\mathrm{tr},\L^2(\R^n)}\leq Ch^{-n}\sum_{|\alpha|\leq 2n+1}\norm{\partial^\alpha (\phi s)(\cdot,h)}_{\L^1(\R^{2n})},\label{eq:symboltraceestims2}
\eq
with a constant $C>0$ that is independent of $h$.
\item As a special case of the previous one, we have in particular: Suppose that the function $s(y,\cdot,h): \eta\mapsto s(y,\eta,h)$ is compactly supported in $\R^n$ for each $y\in \R^n$ and each $h\in (0,1]$, and the volume of the support of the function $s(y,\cdot,h)$ is bounded uniformly in $y\in \R^n$ by some $h$-dependent constant $C_h>0$. Then, 
\bq
\norm{\Phi\circ \mathrm{Op}_h(s)}_{\mathrm{tr},\L^2(\R^n)}\leq C_{\phi}C_hh^{-n}\sum_{|\alpha|\leq 2n+1}\max_{(y,\eta)\in K\times \R^n}|\partial^\alpha s(y,\eta,h)|,\label{eq:symboltraceestims}
\eq
with a constant $C_\phi>0$ that depends on $\phi$ but not on $h$.
\end{itemize}

Very useful in combination with the previous lines is also the following observation, which follows from the statements above and the composition formula (\ref{eq:composexpans}), compare \cite[Proposition 9.5]{dimassi-sjoestrand}. For $i\in \{1,2\}$, let $s_i\in S^{k_i}_{\delta}(1_{\R^{2n}})$ for some $\delta\in [0,\frac{1}{2})$, $k_i\in \R$, and suppose that for each $h\in (0,1]$ the function $s_1(\cdot,h):\R^{2n}\to \C$ is compactly supported inside the interior of some $h$-independent compactum $K\subset \R^{2n}$. Let $s_1\sharp s_2\in S_{\delta}^{k_1+k_2}$ be the symbol obtained from $s_1$ and $s_2$ by the composition formula (\ref{eq:composexpans}). Then for each $N\in \N$, $R>0$, and each non-negative $2n$-dimensional multiindex $\alpha$, there is a constant $C_{\alpha,N}>0$ such that for all $(y,\eta)\in \R^{2n}$ with $\mathrm{dist}((y,\eta),K)\geq R$, one has
\[
|\partial^\alpha (s_1\sharp s_2)(y,\eta,h)|\leq C_{\alpha,N}h^{N(1-\delta)-k_1-k_2-\delta |\alpha|} \mathrm{dist}((y,\eta),K)^{-N}\quad \;\forall\; h\in(0,1].
\]
As the function $(y,\eta)\mapsto \eklm{\mathrm{dist}((y,\eta),K)}^{-N}$ is in $\L^1(\R^{2n})$ if $N>2n$, we can combine the preceding results to get
\begin{cor}\label{cor:supercor}
For $i\in \{1,2\}$, let $s_i\in S^{k_i}_{\delta}(1_{\R^{2n}})$ for some $\delta\in [0,\frac{1}{2})$, $k_i\in \R$, and suppose that for each $h\in (0,1]$ the function $s_1(\cdot,h):\R^{2n}\to \C$ is compactly supported inside some $h$-independent compactum $K\subset \R^{2n}$. Then the operator $\mathrm{Op}_h(s_1\sharp s_2)$ is of trace class and
\bq
\norm{\mathrm{Op}_h(s_1\sharp s_2)}_{\mathrm{tr},\L^2(\R^n)}=\mathrm{O}\big(h^{-n-k_1-k_2-(2n+1)\delta}\big)\qquad \text{as }h\to 0.\label{eq:supercorollary}
\eq
\end{cor}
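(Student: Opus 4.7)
The plan is to apply the trace-norm estimate recalled just above,
\bqn
\norm{\Op_h(s)}_{\mathrm{tr},\L^2(\R^n)} \le C h^{-n}\sum_{|\alpha|\le 2n+1}\norm{\partial^\alpha s(\cdot,h)}_{\L^1(\R^{2n})},
\eqn
to $s:=s_1\sharp s_2$, which by Theorem \ref{thm:compositionformula} lies in $S^{k_1+k_2}_{\delta}(1_{\R^{2n}})$. The entire task thereby reduces to showing that, uniformly in $|\alpha|\le 2n+1$,
\bqn
\norm{\partial^\alpha(s_1\sharp s_2)(\cdot,h)}_{\L^1(\R^{2n})} = \mathrm{O}\big(h^{-k_1-k_2-(2n+1)\delta}\big)\qquad\text{as }h\to 0,
\eqn
since summing over $\alpha$ and multiplying by $h^{-n}$ then gives exactly (\ref{eq:supercorollary}).

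First I would split $\R^{2n}$ into the closed unit neighbourhood $K_1:=\{v\in\R^{2n}:\dist(v,K)\le 1\}$ of $K$ and its complement. On $K_1$, which has a finite $h$-independent Lebesgue volume because $K$ is compact, the generic symbol estimate inherited from $s_1\sharp s_2\in S^{k_1+k_2}_{\delta}(1_{\R^{2n}})$ provides $|\partial^\alpha(s_1\sharp s_2)(v,h)|\le C_\alpha h^{-k_1-k_2-\delta|\alpha|}$, so integrating over $K_1$ yields an $\L^1$-contribution of order $h^{-k_1-k_2-\delta|\alpha|}$.

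On the complement $\R^{2n}\setminus K_1$ I would invoke the pointwise decay estimate stated immediately before the corollary: for every $N\in\N$ there exists $C_{\alpha,N}>0$ with
\bqn
|\partial^\alpha(s_1\sharp s_2)(v,h)|\le C_{\alpha,N}\,h^{N(1-\delta)-k_1-k_2-\delta|\alpha|}\,\dist(v,K)^{-N},\quad v\in\R^{2n}\setminus K_1.
\eqn
Choosing $N>2n$ makes $v\mapsto\dist(v,K)^{-N}$ integrable on $\R^{2n}\setminus K_1$ since $K$ is compact; and because $1-\delta>0$, enlarging $N$ further absorbs any remaining negative powers of $h$, so this region also contributes $\mathrm{O}(h^{-k_1-k_2-\delta|\alpha|})$.

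I do not expect a serious obstacle. The only mild technicality is that the cited decay estimate is phrased for arbitrary $R>0$, whereas I need a single $R$, namely $R=1$, that works uniformly over all multiindices of length at most $2n+1$; this is clear from the stated dependence of the constants $C_{\alpha,N}$ on $\alpha$ and $N$ alone. Adding the bounds from the two regions, taking the worst case $|\alpha|=2n+1$ in the sum, and finally multiplying by $h^{-n}$ then gives the asserted trace-norm bound $\mathrm{O}(h^{-n-k_1-k_2-(2n+1)\delta})$.
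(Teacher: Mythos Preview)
Your proposal is correct and follows precisely the approach the paper intends: the corollary is stated immediately after the trace-norm estimate, the off-support decay bound for $s_1\sharp s_2$, and the integrability of $\eklm{\dist(\cdot,K)}^{-N}$ for $N>2n$, with the paper merely asserting that combining these yields the result. Your split into a bounded neighbourhood of $K$ (using the symbol-class bound) and its complement (using the decay estimate) is exactly the intended combination, with the details spelled out.
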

This corollary is important as it tells us that the trace norm of the composition of two semiclassical pseudodifferential operators, one of which has a symbol supported inside a fixed compactum, essentially depends only on the norm of the derivatives of the original two symbols near the compactum. Surely, Corollary \ref{cor:supercor} could be generalized to $h$-dependent compactums $K(h)$, but as we are mainly interested in a functional calculus for $h$-dependent functions whose support shrinks as $h\to 0$, it is no big loss of generality to assume that the shrinking happens inside a fixed $h$-independent compactum.

Let $\mathfrak{m}:\R^{2n}\to (0,\infty)$ be an order function and let $s\in S(\mathfrak{m})$. We call the symbol function $s$ $\mathfrak{m}$\emph{-elliptic} if there is a constant $\varepsilon>0$ such that $|s|\geq\varepsilon\,\mathfrak{m}$. Crucial for all what follows is the following result:
\begin{thm}[{Essential self-adjointness \cite[Prop.\ 8.5]{dimassi-sjoestrand}}]\label{thm:essself}
Let  $\mathfrak{m}:\R^{2n}\to (0,\infty)$ be an order function with $\mathfrak{m}\geq 1$, and let $s\in S(\mathfrak{m})$ be a real-valued symbol function such that $s+i$ is $\mathfrak{m}$-elliptic, where $i$ denotes the imaginary unit $\sqrt{-1}$. Then, there is a number $h_0\in (0,1]$ such that the operator $(\mathrm{Op}_h(s)+i)^{-1}\in \B(\L^2(\R^n))$ exists for each $h\in (0,h_0]$. Furthermore, the operator $\mathrm{Op}_h(s): \mathcal{S}(\R^n)\to \mathcal{S}(\R^n)\subset \L^2(\R^n)$ is essentially self-adjoint in $\L^2(\R^n)$ for each $h\in (0,h_0]$, and one obtains the unique self-adjoint extension $\overline{\mathrm{Op}_h(s)}$ by equipping $\mathrm{Op}_h(s)$ with the domain
\[
(\mathrm{Op}_h(s)+i)^{-1}\L^2(\R^n)\subset \L^2(\R^n).
\]
\end{thm}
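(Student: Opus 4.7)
The strategy is the classical parametrix argument: construct an approximate inverse of $\mathrm{Op}_h(s)+i$ via symbolic calculus, upgrade it to an exact inverse for small $h$ by a Neumann series, and then read off both essential self-adjointness (via the range criterion) and the description of the self-adjoint domain from this inverse.

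First, since $s+i$ is $\mathfrak{m}$-elliptic and $\mathfrak{m}\geq 1$, the bound $|s+i|\geq \varepsilon\mathfrak{m}\geq\varepsilon$ together with the quotient rule yields $q:=1/(s+i)\in S(1/\mathfrak{m})\subset S(1_{\R^{2n}})$. Hence, by the $\L^2$-continuity estimate (\ref{eq:l2cont45}), the operator $Q_h:=\mathrm{Op}_h(q)$ is bounded on $\L^2(\R^n)$ uniformly in $h$. Applying Theorem \ref{thm:compositionformula} to $s_1=s+i\in S(\mathfrak{m})$ and $s_2=q\in S(1/\mathfrak{m})$, the leading term of the composition symbol equals $(s+i)q=1$, and the next term is a multiple of the Poisson bracket $\{s+i,q\}$, which vanishes identically because $q$ is a function of $s+i$. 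Consequently, the residual symbol lies in $h^2 S(1/\mathfrak{m})\subset h^2 S(1_{\R^{2n}})$, so that
\bqn
(\mathrm{Op}_h(s)+i)\circ Q_h = I + h^2 R_h^{+},\qquad Q_h\circ(\mathrm{Op}_h(s)+i)=I+h^2R_h^{-},
\eqn
with $R_h^{\pm}\in\mathrm{Op}_h(S(1_{\R^{2n}}))$ uniformly bounded on $\L^2(\R^n)$ by (\ref{eq:l2cont45}). For $h\in(0,h_0]$ with $h_0$ sufficiently small, $I+h^2 R_h^{\pm}$ is invertible on $\L^2(\R^n)$ by the Neumann series, which identifies
\bqn
(\mathrm{Op}_h(s)+i)^{-1}=Q_h\circ(I+h^2R_h^{+})^{-1}\in\B(\L^2(\R^n)),
\eqn
and shows that this operator maps $\L^2(\R^n)$ bijectively onto $Q_h(I+h^2R_h^+)^{-1}\L^2(\R^n)\subset\L^2(\R^n)$.

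Next, $\mathrm{Op}_h(s)$ is symmetric on $\mathcal{S}(\R^n)$: real-valuedness of $s$ together with the defining Weyl formula (\ref{eq:defweyl}) yields $\langle\mathrm{Op}_h(s)f,g\rangle=\langle f,\mathrm{Op}_h(s)g\rangle$ for $f,g\in \mathcal{S}(\R^n)$. For essential self-adjointness I will apply the range criterion, i.e.\ show that $(\mathrm{Op}_h(s)\pm i)(\mathcal{S}(\R^n))$ is dense in $\L^2(\R^n)$. Given $g\in\mathcal{S}(\R^n)$, the Neumann series for $(I+h^2R_h^{+})^{-1}$ converges not only on $\L^2(\R^n)$ but also in the Fr\'echet topology of $\mathcal{S}(\R^n)$, because the pseudodifferential operators $R_h^+$ act continuously on $\mathcal{S}(\R^n)$. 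Hence $v:=(I+h^2R_h^{+})^{-1}g\in\mathcal{S}(\R^n)$, and $u:=Q_h v\in\mathcal{S}(\R^n)$ satisfies $(\mathrm{Op}_h(s)+i)u=g$. Thus $\mathrm{Ran}((\mathrm{Op}_h(s)+i)|_{\mathcal{S}(\R^n)})\supset \mathcal{S}(\R^n)$, which is dense in $\L^2(\R^n)$; the same argument works for $-i$, so essential self-adjointness follows.

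Finally, to identify the domain of $\overline{\mathrm{Op}_h(s)}$, I use that the closure of a symmetric operator $A$ whose $A+i$ has dense range on a core $D$ is characterized by its graph, and that $(A+i)$ extends to a bijection between $D(\overline A)$ and $\L^2$; by uniqueness, this extension must coincide with our bounded bijection $(\mathrm{Op}_h(s)+i)^{-1}:\L^2(\R^n)\to (\mathrm{Op}_h(s)+i)^{-1}\L^2(\R^n)$, which yields the stated formula for the domain. The main technical obstacle is the step showing that $I+h^2R_h^{+}$ is invertible simultaneously as an operator on $\L^2(\R^n)$ and on $\mathcal{S}(\R^n)$ in such a way that the resulting domain makes the abstract extension agree with the Weyl quantization on $\mathcal{S}(\R^n)$; this relies crucially on the fact that the ellipticity constant $\varepsilon$ and the $S(1_{\R^{2n}})$-seminorms of $R_h^{\pm}$ can be made uniform in $h\in(0,1]$, which itself uses the $\mathfrak{m}\geq 1$ hypothesis to absorb the growth of $s$ when forming $1/(s+i)$.
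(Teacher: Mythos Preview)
The paper does not give its own proof of this statement; Theorem~\ref{thm:essself} is quoted verbatim from \cite[Prop.~8.5]{dimassi-sjoestrand} and used as a black box. Your parametrix-plus-Neumann-series argument is the standard route and is essentially the one taken in that reference, so at the level of strategy there is nothing to compare.

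That said, one step in your write-up is not adequately justified. You claim that the Neumann series $\sum_{k\geq 0}(h^2R_h^+)^k$ converges in the Fr\'echet topology of $\mathcal{S}(\R^n)$ ``because the pseudodifferential operators $R_h^+$ act continuously on $\mathcal{S}(\R^n)$''. Continuity on a Fr\'echet space does not by itself give you summability of the iterates: each application of $R_h^+$ may force control by a larger finite family of seminorms, and the factor $h^2$ alone does not beat this loss. A cleaner way to get density of the range is to use the \emph{left} parametrix and duality: if $u\in\L^2(\R^n)$ satisfies $(\mathrm{Op}_h(s)+i)u=0$ in $\mathcal{S}'(\R^n)$, then applying $Q_h$ (which extends continuously to $\mathcal{S}'$) gives $(I+h^2R_h^-)u=0$, and invertibility of $I+h^2R_h^-$ on $\L^2$ forces $u=0$. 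This shows $\ker\big((\mathrm{Op}_h(s)\pm i)^*\big)=\{0\}$, hence essential self-adjointness, without ever needing convergence on $\mathcal{S}$. Alternatively, iterate the parametrix to make the remainder lie in $h^{N}\mathrm{Op}_h(S(1_{\R^{2n}}))$ for arbitrary $N$, or invoke Beals' theorem to conclude that the exact inverse is itself in $\mathrm{Op}_h(S(1/\mathfrak{m}))$ and hence preserves $\mathcal{S}$.

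A minor point: the residual symbol after the first two terms of the Weyl composition lies in $h^2 S(1_{\R^{2n}})$, which is what you need; your stronger claim that it lies in $h^2 S(1/\mathfrak{m})$ is not obviously true (derivatives of $s+i\in S(\mathfrak{m})$ remain in $S(\mathfrak{m})$, so the product stays in $S(1)$), but this does not affect the argument.
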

For example, if $\mathfrak{m}(y,\eta)=\eklm{\eta}^2$ and $s(y,\eta,h)=\norm{\eta}^2$, then
\bq
(\mathrm{Op}_h(s)+i)^{-1}\L^2(\R^n)=\Sob_h^2(\R^n),\label{eq:samesobolevspaces}
\eq
where $\Sob_h^2(\R^n)$ is the semiclassical equivalent of the Sobolev space $\Sob^2(\R^n)$, see \cite[Thm.\ 8.10]{zworski}. 
Now, the known functional calculus in $\R^n$ for fixed $h$-independent functions is summarized in
\begin{thm}[{\cite[Theorem 8.7 and p.\ 103]{dimassi-sjoestrand}}]\label{thm:funccalcrn}
Let $s$ be a symbol function as in Theorem \ref{thm:essself} and let $f\in \CT(\R)$. Consider the operator $f\big(\overline{\mathrm{Op}_h(s)}\big):\L^2(\R^n)\to \L^2(\R^n)$ defined by the spectral calculus for unbounded self-adjoint operators. Then, there is a symbol function $a_{s,f}\in \bigcap_{k\in \N}S(\mathfrak{m}^{-k})$ and a number $h_0\in (0,1]$ such that for $h\in (0,h_0]$
\[
f\Big(\overline{\mathrm{Op}_h(s)}\Big)=\mathrm{Op}_h(a_{s,f}).
\]
Moreover, if $s$ fulfills $
s\sim \sum_{j=0}^\infty h^j s_j \,\text{ in } S(\mathfrak{m})
$ 
for some sequence $\{s_j\}_{j=0,1,2,\ldots}\subset S(\mathfrak{m})$, then there is a sequence of polynomials $\{q_{s,j}(y,\eta,t,h)\}_{j=0,1,2,\ldots}$ in one variable $t\in \R$ with coefficients $h$-dependent functions in $\Cinft(\R^{2n})$ and with $q_{s,0}\equiv 1$, such that
\[
a_{s,f}\sim \sum_{j=0}^\infty h^j a_{s,f,j} \,\text{ in } S(1/\mathfrak{m}), \quad a_{s,f,j}(y,\eta,h)=\frac{1}{(2j)!}\Big(\frac{\partial}{\partial t}\Big)^{2j}(q_{s,j}(y,\eta,t,h)f(t))_{t=s_0(y,\eta,h)}.
\]
In particular, $a_{s,f,0}=f\circ s_0$.
\end{thm}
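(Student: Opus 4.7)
The plan is to apply the Helffer--Sj\"ostrand functional calculus, combining an almost analytic extension of $f$ with a parametrix for the resolvent of $\Op_h(s)$. Since $f\in\CT(\R)$, one constructs a compactly supported almost analytic extension $\tilde f\in\CT(\C)$ with $\tilde f|_\R=f$ and, for every $N\in\N$, $|\gdq \tilde f(z)|\leq C_N|\Im z|^N$ on $\C$. This yields the pointwise identity
\bqn
f(\lambda)=-\frac{1}{\pi}\intop_\C \gdq \tilde f(z)(z-\lambda)^{-1}L(dz),\qquad \lambda\in\R,
\eqn
with $L(dz)$ the Lebesgue measure on $\C$. By Theorem \ref{thm:essself}, $\overline{\Op_h(s)}$ is self-adjoint for small $h$, and combining the spectral theorem with the standard bound $\norm{(z-\overline{\Op_h(s)})^{-1}}_{\B(\L^2(\R^n))}\leq|\Im z|^{-1}$ gives
\bqn
f\big(\overline{\Op_h(s)}\big)=-\frac{1}{\pi}\intop_\C \gdq \tilde f(z)\big(z-\overline{\Op_h(s)}\big)^{-1}L(dz),
\eqn
understood as a Bochner integral in $\B(\L^2(\R^n))$.

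Next, for $z$ in a complex neighborhood of $\supp\tilde f$ with $\Im z\neq 0$, I would construct a parametrix $\Op_h(b_z)$ for $z-\Op_h(s)$. Since $s+i$ is $\mathfrak m$-elliptic and $s$ is real-valued, one obtains a lower bound $|z-s_0(y,\eta,h)|\geq c(|\Im z|+\mathfrak m(y,\eta))$ locally uniformly in $z$ and for small $h$, so that $b_{z,0}:=(z-s_0)^{-1}\in S(\mathfrak m^{-1})$ with symbol seminorms that blow up only polynomially in $|\Im z|^{-1}$. Iterating the composition formula of Theorem \ref{thm:compositionformula}, one builds $b_z\sim\sum_{j\geq 0}h^j b_{z,j}$ such that $(z-\Op_h(s))\Op_h(b_z)=I-\Op_h(r_z)$ with $r_z\in S^{-\infty}$, the seminorms again controlled by negative powers of $|\Im z|$. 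An induction on $j$ shows that each $b_{z,j}$ has the form $\sum_{k=0}^{2j}d_{j,k}(y,\eta,h)(z-s_0(y,\eta,h))^{-k-1}$, where the $d_{j,k}$ are polynomial expressions in the derivatives of $s_0$ and in $s_1,\ldots,s_j$.

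Expanding $(z-\Op_h(s))^{-1}$ as a Neumann-type series in $\Op_h(b_z)$ and $\Op_h(r_z)$ and inserting into the Helffer--Sj\"ostrand formula, the rapid vanishing of $\gdq\tilde f$ near $\R$ absorbs the $|\Im z|^{-1}$-blow-ups and produces, modulo $S^{-\infty}$, the symbol
\bqn
a_{s,f}(y,\eta,h)=-\frac{1}{\pi}\intop_\C\gdq \tilde f(z)\,b_z(y,\eta,h)\,L(dz).
\eqn
Applying the Cauchy--Pompeiu identity $-\frac{1}{\pi}\intop_\C \gdq \tilde f(z)(z-\mu)^{-k-1}L(dz)=\frac{1}{k!}f^{(k)}(\mu)$ for $\mu\in\R$ to each term in the expansion of $b_{z,j}$ yields $a_{s,f}\sim\sum_j h^j a_{s,f,j}$ in $S(1/\mathfrak m)$, with $a_{s,f,j}$ a linear combination of derivatives $f^{(k)}(s_0)$, $k\leq 2j$, weighted by the $d_{j,k}$. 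Reorganizing via the Leibniz rule in a new variable $t$ recasts this as $\frac{1}{(2j)!}(\partial_t)^{2j}(q_{s,j}(y,\eta,t,h)f(t))|_{t=s_0}$ with $q_{s,0}\equiv 1$, whence $a_{s,f,0}=f\circ s_0$. The upgrade from $S(1/\mathfrak m)$ to $\bigcap_k S(\mathfrak m^{-k})$ follows because $\supp f$ is compact and $z-s$ is invertible to any order in $\mathfrak m^{-1}$ outside a compactum, so the parametrix construction can be iterated to gain arbitrary powers of $\mathfrak m^{-1}$.

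The main technical obstacle is the careful bookkeeping of the $|\Im z|^{-1}$-degeneration of the symbol seminorms of $b_{z,j}$ and $r_z$ as $z\to\R$: each step of the parametrix construction multiplies the seminorms by an additional negative power of $|\Im z|$, and one must verify that the rapid vanishing of $\gdq\tilde f$ near $\R$ still suppresses these blow-ups after any finite number of iterations, uniformly in the compactly supported $z$-variable. Once this is established, the polynomial structure of the $q_{s,j}$ follows combinatorially from the composition formula \eqref{eq:composexpans}, and the rapid $\mathfrak m$-decay of $a_{s,f}$ is a direct consequence of the ellipticity of $s+i$.
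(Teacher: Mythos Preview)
Your proposal is correct and follows essentially the same Helffer--Sj\"ostrand approach that the paper cites from Dimassi--Sj\"ostrand and recapitulates in the proof of Theorem~\ref{thm:semiclassfunccalcrn}: almost analytic extension, resolvent as $\Op_h$ of a symbol with $|\Im z|^{-1}$-controlled seminorms, termwise Cauchy integration to obtain the $a_{s,f,j}$. The only noticeable deviation is your argument for the upgrade to $\bigcap_k S(\mathfrak m^{-k})$: the paper (following Dimassi--Sj\"ostrand) does this algebraically via $f(\overline{\Op_h(s)})=(\overline{\Op_h(s)}+i)^{-k}f_k(\overline{\Op_h(s)})$ with $f_k(t)=(t+i)^k f(t)$, which is cleaner than iterating the parametrix, though both routes work.
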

\noindent A corollary is the following trace formula for semiclassical pseudodifferential operators in $\R^n$:
\begin{thm}[{\cite[Theorem 9.6]{dimassi-sjoestrand}}]\label{thm:traceformrn}
Let  $\mathfrak{m}:\R^{2n}\to (0,\infty)$ be an order function with $\mathfrak{m}\geq 1$, and let $s\in S(\mathfrak{m})$ be a real-valued symbol function such that $s+i$ is $\mathfrak{m}$-elliptic, with an asymptotic expansion $ 
s\sim \sum_{j=0}^\infty h^j s_j \,\text{ in } S(\mathfrak{m}),
$ 
where $\{s_j\}_{j=0,1,2,\ldots}\subset S(\mathfrak{m})$. Let $I\subset \R$ be a bounded open interval with
\[
\liminf_{\norm{v} \to +\infty}\mathrm{dist}(s(v,h),I)\geq C\qquad\forall\;h\in(0,1]
\]
for a constant $C>0$ which is independent of $h$, and let $f\in \CT(I)\subset \CT(\R)$ be given. Then, the operator $f\big(\overline{\mathrm{Op}_h(s)}\big):\L^2(\R^n)\to \L^2(\R^n)$ is of trace class for small $h$, and as $h\to 0$, its trace is asymptotically given by
\[
\mathrm{tr}_{\L^2(\R^n)}f\Big(\overline{\mathrm{Op}_h(s)}\Big)=\frac{1}{(2\pi h)^n}\intop_{\R^{2n}}f\big(s_0(y,\eta,h)\big)\d y \d \eta + \mathrm{O}\big(h^{-n+1}\big).
\]
\end{thm}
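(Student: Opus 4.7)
My plan is to derive the formula by combining Theorem \ref{thm:funccalcrn} with the explicit trace identity (\ref{eq:trace11111111}) for the Weyl quantization. First I would apply Theorem \ref{thm:funccalcrn} to obtain a number $h_0\in(0,1]$ and a symbol function $a_{s,f}\in\bigcap_{k\in\N}S(\mathfrak{m}^{-k})$ with $f\big(\overline{\mathrm{Op}_h(s)}\big)=\mathrm{Op}_h(a_{s,f})$ for all $h\in(0,h_0]$, and with asymptotic expansion $a_{s,f}\sim\sum_{j=0}^\infty h^j a_{s,f,j}$ in $S(1/\mathfrak{m})$ whose leading term is $a_{s,f,0}=f\circ s_0$.

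The crucial step is to leverage the distance hypothesis on $s$ to conclude that $a_{s,f}(\cdot,h)$ is compactly supported in $\R^{2n}$, uniformly for small $h$. Since $s-s_0=\mathrm{O}(h)$ in $S(\mathfrak{m})$, the function $s_0(\cdot,h)$ also satisfies $\liminf_{\norm{v}\to\infty}\mathrm{dist}(s_0(v,h),I)\geq C/2$ uniformly for small $h$, so one can choose an $h$-independent compactum $K\subset\R^{2n}$ outside which $s_0(\cdot,h)$ avoids $\supp f$. By the explicit formula for $a_{s,f,j}$ in Theorem \ref{thm:funccalcrn}, each $a_{s,f,j}(\cdot,h)$ is a polynomial-coefficient combination of derivatives of $f$ evaluated at $s_0(\cdot,h)$, hence vanishes outside $K$ for small $h$. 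The Borel-type summation producing $a_{s,f}$ can then be carried out so that $a_{s,f}(\cdot,h)$ is supported in a slightly enlarged $h$-independent compactum $K'$, as follows from the standard almost-analytic extension proof of Theorem \ref{thm:funccalcrn}.

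With compact support in hand, the trace class criterion and the trace identity (\ref{eq:trace11111111}) apply directly to $\mathrm{Op}_h(a_{s,f})$. Truncating the asymptotic expansion at first order gives $a_{s,f}=f\circ s_0+h\,r_1$ with $r_1\in S(1/\mathfrak{m})$ supported in $K'$ for small $h$, so that
\begin{equation*}
\mathrm{tr}_{\L^2(\R^n)}f\Big(\overline{\mathrm{Op}_h(s)}\Big)=\frac{1}{(2\pi h)^n}\intop_{\R^{2n}}f\circ s_0\d y\d\eta+\frac{h}{(2\pi h)^n}\intop_{K'}r_1(y,\eta,h)\d y\d\eta.
\end{equation*}
Since $r_1$ is bounded and $K'$ is compact, the second term is $\mathrm{O}(h^{-n+1})$, yielding the claim.

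The hard part will be carefully tracking the support of the Borel-summed symbol $a_{s,f}$ and of its truncation remainder $r_1$ through the almost-analytic-extension construction behind Theorem \ref{thm:funccalcrn}, so as to ensure these supports sit in a fixed compactum for all small $h$. Once this is achieved, the remainder estimate follows by bounding the remaining integral over a compact set by the supremum norm of $r_1$, which is controlled via the embedding $S(1/\mathfrak{m})\subset S(1_{\R^{2n}})$ implied by $\mathfrak{m}\geq 1$.
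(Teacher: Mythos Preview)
Your overall strategy---apply Theorem~\ref{thm:funccalcrn} to write $f\big(\overline{\mathrm{Op}_h(s)}\big)=\mathrm{Op}_h(a_{s,f})$, exploit the liminf hypothesis to localize, and then invoke the trace identity~(\ref{eq:trace11111111})---is exactly the route the paper points to (the paper does not prove Theorem~\ref{thm:traceformrn} itself but cites \cite{dimassi-sjoestrand}, and its own $h$-dependent generalization is proved ``by complete analogy'' using Theorem~\ref{thm:semiclassfunccalcrn} together with Corollary~\ref{cor:supercor}). So the plan is the right one.

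However, there is a genuine gap in your argument. The symbol $a_{s,f}$ is \emph{not} obtained by a Borel-type summation that you are free to arrange; it is the \emph{unique} Weyl symbol of the operator $f\big(\overline{\mathrm{Op}_h(s)}\big)$, produced directly by the Helffer--Sj\"ostrand formula as the integral $-\pi^{-1}\int_\C \bar\partial_z\tilde f(z)\,r_z\,dz$. The resolvent symbol $r_z$ is not compactly supported in $v\in\R^{2n}$, and hence neither is $a_{s,f}$ in general---it lies in $\bigcap_k S(\mathfrak{m}^{-k})$, which gives rapid decay in $\mathfrak{m}$ but not compact support, and not even $L^1$-integrability if $\mathfrak{m}$ fails to grow at infinity. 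The same objection applies to your truncation remainder $r_1=(a_{s,f}-f\circ s_0)/h$: while every individual coefficient $a_{s,f,j}$ \emph{is} compactly supported in the set $K=\{v:s_0(v,h)\in\supp f\}$ (this part of your argument is correct), the full symbol and its tails are not. Consequently you cannot invoke~(\ref{eq:trace11111111}) directly for $\mathrm{Op}_h(a_{s,f})$ or for $\mathrm{Op}_h(r_1)$.

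The standard fix, and the one the paper signals via Corollary~\ref{cor:supercor}, is to localize by a compactly supported cutoff in phase space rather than to claim compact support of $a_{s,f}$ itself. Concretely: choose $\chi\in\CT(\R^{2n})$ with $\chi\equiv 1$ on a fixed neighborhood of $K$, and split $f(P)=\mathrm{Op}_h(\chi)\circ f(P)+\mathrm{Op}_h(1-\chi)\circ f(P)$. The first piece is $\mathrm{Op}_h(\chi\sharp a_{s,f})$ with $\chi$ compactly supported, so Corollary~\ref{cor:supercor} yields trace class with the right trace norm, and the expansion of $\chi\sharp a_{s,f}$ gives the leading integral. For the second piece one shows that $(1-\chi)\sharp a_{s,f}\in S^{-\infty}(\mathfrak{m}^{-k})$ for all $k$ (since $1-\chi$ and every $a_{s,f,j}$ have disjoint supports, each term of the composition expansion vanishes), and then combines this with another application of the trace-norm machinery to get an $\mathrm{O}(h^\infty)$ contribution. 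Your ``hard part'' paragraph should therefore be redirected from tracking support of $a_{s,f}$ (which will not succeed) to carrying out this cutoff-and-composition argument.
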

In order to introduce semiclassical pseudodifferential operators on general smooth manifolds, we need the following special type of symbol classes which is invariant under pullbacks along diffeomorphisms. For $m\in \R$ and $\delta\in[0,\frac{1}{2})$,  one sets
\begin{multline}
S_{\delta}^{m}(\R^{n}):=\Big\{a:\R^{2n}\times(0,1]\to \C: \; a(\cdot,h)\in \Cinft(\R^{2n})\;\forall\; h\in (0,1],\text{ and }  \forall\text{ multiindices } s,t\;\\ \exists\; C_{s,t}>0:  |\partial^s_x\partial^t_\xi a(x,\xi,h)|\leq
C_{s,t}\left<\xi\right>^{m-|t|}h^{-\delta(|s|+|t|)}\;\forall\,x\in \R^n,\;h\in (0,1]\Big\}.\label{eq:kohnnirenberg}
\end{multline}
Note that $S_{\delta}^{m}(\R^{n})\subset S_{\delta}(\mathfrak{m}_m)$, where $\mathfrak{m}_m:\R^{2n}\to (0,\infty)$ is given by $\mathfrak{m}_m(x,\xi):=\left<\xi\right>^m$, but the reverse inclusion is not true. The symbol classes (\ref{eq:kohnnirenberg}) generalize the classical Kohn-Nirenberg classes. In the literature one usually encounters only the case $\delta=0$. In our context it is natural to allow $\delta>0$, since the $h$-dependent functional calculus is primarily useful for functions whose derivatives have growing supremum norms as $h\to 0$. See \cite{dyatlov} for more applications of the symbol class (\ref{eq:kohnnirenberg}). Let now $M$ be a smooth manifold of dimension $n$, and let $\{(U_\alpha,\gamma_\alpha)\}_{\alpha\in\mathcal{A}}$, $\gamma_\alpha:M \supset U_\alpha \to V_\alpha\subset \R^{n}$, be an atlas for $M$. Then one defines
\begin{align}
\nonumber S_{\delta}^{m}(M):=&\Big\{a:T^*M\times (0,1]\to \C,\; a(\cdot, h)\in \Cinft(T^*M)\;\forall\;h\in(0,1],\\
&\quad (\gamma_\alpha^{-1}) ^*(\varphi_\alpha a)\in S_{\delta}^m(\R^{n})\;\,\forall\;\alpha\in\mathcal{A},\; \forall\;\varphi_\alpha\in \CT(U_\alpha)\Big\},\label{def:symbclass2}
\end{align}
where $(\gamma_\alpha^{-1})^\ast$ denotes the pullback\footnote{The pullback is defined as follows: 
First, one identifies $T^*V_\alpha$ with $V_\alpha\times\R^n$. Then, given $a:T^*M\times (0,1]\to \C$, the function $(\varphi_\alpha a)\circ \big(\gamma^{-1}_\alpha\times(\partial{\gamma_\alpha^{-1}})^T\times \1_{(0,1]}\big):V_\alpha\times\R^n\times(0,1]\to \C$ has compact support inside $V_\alpha$ in the first variable, and hence extends by zero to a function  
  $\R^{2n}\times(0,1]\to \C$ which is smooth for each fixed $h$. This function is defined to be $(\gamma_\alpha^{-1}) ^*(\varphi_\alpha a)$.} along $\gamma^{-1}_\alpha$. 
The definition is independent of the choice of atlas, and we call an element of 
$S_{\delta}^{m}(M)$ a \textit{symbol function}, similarly to the notion of symbol functions on $\R^n$ defined above. We use the short hand notations $$S_{\delta}^{-\infty}(M):=\bigcap_{m\in \R}S_{\delta}^{m}(M),\qquad S^m(M):=S_{0}^m(M),\qquad m\in \R\cup\{-\infty\}.$$For $m\in \R\cup\{-\infty\}$ and $\delta\in [0,\frac{1}{2})$, we call a $\C$-linear map $P: \CT(M)\to \Cinft(M)$ \emph{semiclassical pseudodifferential operator on $M$ of order $(m,\delta)$} if the following holds:
\begin{enumerate}
\item For some (and hence any) atlas $\{(U_\alpha,\gamma_\alpha)\}_{\alpha\in\mathcal{A}}$,
$\gamma_\alpha:M \supset U_\alpha \to V_\alpha\subset \R^{n}$ of $M$ there exists a collection of symbol functions $\{s_\alpha\}_{\alpha\in\mathcal{A}}\subset S_{\delta}^m(\R^{n})$ such that for any two functions $\varphi_{\alpha,1}, \varphi_{\alpha,2} \in \CT(U_\alpha)$, it holds
\[
\varphi_{\alpha,1} P(\varphi_{\alpha,2} f)=\varphi_{\alpha,1} \mathrm{Op}_h(s_\alpha)((\varphi_{\alpha,2} f)\circ\gamma_\alpha^{-1})\circ\gamma_\alpha.
\]
\item For all $\varphi_{1}, \varphi_{2} \in \CT(M)$ with $\supp \varphi_{1}\cap \supp \varphi_{2}=\emptyset$, one has
\[
\norm{\Phi_1\circ P \circ \Phi_2}_{\Sob^{-N}(M)\to \Sob^N(M)}=\mathrm{O}(h^\infty)\quad\forall \;N=0,1,2,\ldots,
\]
where $\Phi_j$ is given by pointwise multiplication with $\varphi_j$, and $\Sob^N(M)$ is the $N$-th Sobolev space.
\end{enumerate}
When $\delta=0$, we just say \emph{order} $m$ instead of \emph{order} $(m,0)$. We denote by $\Psi^m_{\delta} (M)$ the $\C$-linear space of all semiclassical pseudodifferential operators on $M$ of order $(m,\delta)$, and we write $$\Psi_h^{m}(M):=\Psi^m_{0}(M),\qquad \Psi_h^{-\infty}(M)= \bigcap_{m\in \Z} \Psi_h^{m}(M).$$ 
From the classical theorems about pseudodifferential operators one infers in particular the following relation between symbol functions and semiclassical pseudodifferential operators, see \cite[page 86]{hoermanderIII}, \cite[Theorem 14.1]{zworski}, \cite[page 383]{dyatlov}. There is a $\C$-linear map 
\begin{equation}
\Psi^m_{\delta} (M) \to S_{\delta}^{m}(M)/\big(h^{1-2\delta}S_{\delta}^{m-1}(M)\big),\quad P  \mapsto \sigma(P) \label{eq:sigma}
\end{equation}
which assigns to a semiclassical pseudodifferential operator its \emph{principal symbol}. Moreover, for each choice of atlas  $\{(U_\alpha,\gamma_\alpha)\}_{\alpha\in\mathcal{A}}$ of $M$ and a partition of unity $\{\varphi_\alpha\}_{\alpha \in \mathcal{A}}$ subordinate to $\{U_\alpha\}_{\alpha \in \mathcal{A}}$, there is a $\C$-linear map called \emph{quantization}, written
\begin{equation}
S_{\delta}^{m}(M) \to \Psi^m_{\delta} (M), \quad s \mapsto \mathrm{Op}_{h,\left\{U_\alpha,\varphi_\alpha\right\}_{\alpha \in \mathcal{A}}}(s).\label{eq:op}
\end{equation}
Any choice of such a map induces the same $\C$-linear bijection
\begin{align}
\Psi^m_{\delta} (M)/\big(h^{1-2\delta}\Psi^{m-1}_{\delta}(M)\big) & \begin{matrix}\sigma\\ \rightleftarrows \\ \mathrm{Op}_h \end{matrix} S_{\delta}^{m}(M)/\big(h^{1-2\delta}S_{\delta}^{m-1}(M)\big),\label{eq:qmaps}
\end{align}
which means in particular that the bijection exists and is independent from the choice of atlas and partition of unity. We will call an element in the quotient set $$S_{\delta}^{m}(M)/\big(h^{1-2\delta}S_{\delta}^{m-1}(M)\big)$$ a \textit{principal symbol}, whereas we call the elements of $S_{\delta}^{m}(M)$ \textit{symbol functions}, as introduced above. Operations on principal symbols such as pointwise multiplication with other principal symbols or smooth functions and composition with smooth functions are defined by performing the corresponding operations on the level of symbol functions. For a semiclassical pseudodifferential operator $A$ on $M$, we will use the notation 
\[
\sigma(A)=[a]
\]
to express that the principal symbol $\sigma(A)$ is the equivalence class in the quotient set $$S_{\delta}^{m}(M)/\big(h^{1-2\delta}S_{\delta}^{m-1}(M)\big)$$ defined by the symbol function $a\in S_{\delta}^{m}(M)$. Finally, returning to the setup introduced at the beginning, the known functional calculus for our Schr\"odinger operator $P(h)$ on the closed connected Riemannian manifold $M$ for a fixed $h$-independent function is summarized in the following
\begin{thm}[{\cite[Theorems 14.9 and 14.10]{zworski}}]
\label{thm:schwartzinfty} Let $f\in \mathcal{S}(\mathbb{R})$. Then, the operator $f(P(h))$, defined by the spectral theorem
for unbounded self-adjoint operators, is an element of $\Psi_h^{-\infty}(M)$. Furthermore, $f(P(h))$ extends to a bounded operator $f(P(h)):\L^2(M)\to \L^2(M)$ of trace class, and one has
\begin{equation}
\sigma\left(f(P(h))\right)=[f\circ p].\label{eq:2}
\end{equation}
As $h\to 0$, the trace of $f(P(h))$ is asymptotically given by
\[
\mathrm{tr}_{\L^2(M)}f(P(h))=\frac{1}{(2\pi h)^n}\intop_{T^*M}f\circ p \,\d(T^*M)+\mathrm{O}(h^{-n+1}).
\]
\end{thm}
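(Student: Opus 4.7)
The plan is to combine the Helffer--Sj\"ostrand formula with a parametrix construction for the resolvent $(z - P(h))^{-1}$ done chart by chart. Fix an almost analytic extension $\widetilde f \in \CT(\C)$ of $f$ satisfying $|\overline\partial\widetilde f(z)| \leq C_N |\Im z|^N$ for every $N$, so that the spectral theorem yields
\bqn
f(P(h)) = -\frac{1}{\pi}\int_{\C} \overline\partial\widetilde f(z)\,(z-P(h))^{-1}\, dL(z),
\eqn
with $dL$ the Lebesgue measure on $\C$. The problem then reduces to understanding $(z - P(h))^{-1}$ as a semiclassical pseudodifferential operator with controlled dependence on $z$.

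For the resolvent, introduce an atlas $\{(U_\alpha,\gamma_\alpha)\}_{\alpha\in\A}$, a subordinate partition of unity $\{\varphi_\alpha\}$, and nested cutoffs $\overline\varphi_\alpha$. In each chart, $P(h)$ pulls back to a Weyl quantization of a symbol in $S(\mathfrak{m})$ with $\mathfrak{m}(x,\xi)=\eklm{\xi}^2$ and real-valued principal symbol $p_\alpha$ satisfying the ellipticity hypothesis of Theorem \ref{thm:essself}. For $\Im z \neq 0$ the function $(z-p_\alpha)^{-1}$ lies in $S(1/\mathfrak{m})$ with seminorms growing polynomially in $1/|\Im z|$, and the composition formula (Theorem \ref{thm:compositionformula}) produces an asymptotic inverse $e_\alpha(z,h)\in S(1/\mathfrak{m})$ with leading term $1/(z - p_\alpha)$. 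Patching via
\bqn
E(z,h) f = \sum_{\alpha\in\A} \overline\varphi_\alpha\cdot \Op_h(e_\alpha(z,h))\big((\varphi_\alpha f)\circ\gamma_\alpha^{-1}\big)\circ \gamma_\alpha
\eqn
gives a global $E(z,h) \in \Psi_h^{-2}(M)$ satisfying $(z-P(h))E(z,h) = \mathbf{1} + R(z,h)$ with $R(z,h) \in \Psi_h^{-\infty}(M)$. Neumann iteration combined with the resolvent bound $\|(z-P(h))^{-1}\| \leq 1/|\Im z|$ expresses the true resolvent as $E(z,h)$ plus finitely many explicit pseudodifferential corrections plus an arbitrarily small $O(h^N)$ remainder in every Sobolev operator norm.

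Substituting into the Helffer--Sj\"ostrand integral, the rapid vanishing of $\overline\partial\widetilde f$ at the real axis absorbs the $1/|\Im z|$ growth, so each term is an absolutely convergent Bochner integral producing a semiclassical pseudodifferential operator on $M$. A Stokes/residue computation in the $z$-plane shows that the principal-symbol contribution is exactly $f\circ p$, giving $\sigma(f(P(h))) = [f\circ p]$. Because $(z-p_\alpha)^{-k}$ lies in $S(\mathfrak{m}^{-k})$ for every $k$, iteration of the composition formula yields local symbols in $\bigcap_m S^m(\R^n)$, whence $f(P(h)) \in \Psi_h^{-\infty}(M)$, and boundedness and trace-class membership on $\L^2(M)$ follow from the local estimate (\ref{eq:trace11111111}).

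For the asymptotic trace formula, apply (\ref{eq:trace11111111}) chart by chart to the local Weyl symbols $a_\alpha(y,\eta,h)$ obtained from the above parametrix expansion, and sum over $\alpha$. The leading term in $a_\alpha$ is $f\circ p_\alpha$, producing $(2\pi h)^{-n}\int_{T^*M} f\circ p\,d(T^*M)$; the first correction is of class $h\,S^{-\infty}$, yielding an $O(h^{-n+1})$ remainder after the $(2\pi h)^{-n}$ prefactor, and the higher-order terms contribute strictly lower order. The main obstacle will be ensuring that all estimates are uniform in $z$ on the support of $\widetilde f$ away from the real axis, that patching errors and commutators with the $\overline\varphi_\alpha$ are genuinely $O(h^\infty)$ in trace norm rather than merely in operator norm, and that the Stokes computation producing $f\circ p$ at the principal-symbol level is tracked consistently across charts.
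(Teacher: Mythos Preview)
The paper does not actually prove this statement: it is quoted verbatim from \cite[Theorems 14.9 and 14.10]{zworski} and closed immediately with a bare \qed. There is therefore no in-paper proof to compare against, and your outline is essentially the standard argument and is correct.

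It is still worth noting that the paper \emph{does} prove the $h$-dependent generalisation (Theorem~\ref{thm:semiclassfunccalcmfld}, Theorem~\ref{thm:localtraceform}, Corollary~\ref{thm:semiclassicaltraceformula}), and there the parametrix step is organised a little differently from yours. You build, in each chart, a symbolic asymptotic inverse $e_\alpha(z,h)$ of the local operator via the composition formula, with leading term $(z-p_\alpha)^{-1}$, and only afterwards integrate against $\overline\partial\widetilde f$. The paper instead patches together the \emph{exact} local resolvents $(P_\alpha(h)-z)^{-1}$ (available by Lemma~\ref{lem:symbfunc}) into a global parametrix $Y(h,z)$ for $(P(h)-z)^{-1}$, integrates first, recognises the result in each chart as $f(P_\alpha(h))$, and then invokes the already established $\R^n$ functional calculus (Theorem~\ref{thm:funccalcrn} / Theorem~\ref{thm:semiclassfunccalcrn}) as a black box. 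The advantage of the paper's ordering is that the only $z$-dependent estimates one has to track are the crude resolvent bound and the off-diagonal estimate of Lemma~\ref{lem:resolvestim}; all the symbolic bookkeeping is hidden inside the $\R^n$ theorem. Your route is more self-contained at the symbolic level but forces you to control the $|\Im z|^{-k}$ growth of every seminorm of $e_\alpha(z,h)$ through the composition expansion---exactly the issue you flag in your last paragraph. Both approaches are valid; yours is closer to Zworski's original presentation, the paper's is tailored to reduce the manifold case to the Euclidean case wholesale.
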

\vspace*{-1em}
\qed 
\section{Results for $\R^n$}\label{sec:resultsrn}
In this section, we extend the results of Theorems \ref{thm:funccalcrn} and \ref{thm:traceformrn} to functions which depend on the semiclassical parameter $h$. The generalized theorems will then be used in the next section to prove a precise explicit trace formula for a semiclassical Schr\"odinger operator on a closed Riemannian manifold.

\subsection{Relating the functional and symbolic calculi}
We begin with the following
\begin{thm}\label{thm:semiclassfunccalcrn}
Let  $\mathfrak{m}:\R^{2n}\to (0,\infty)$ be an order function with $\mathfrak{m}\geq 1$, and let $s\in S(\mathfrak{m})$ be a real-valued symbol function such that $s+i$ is $\mathfrak{m}$-elliptic, where $i$ denotes the imaginary unit $\sqrt{-1}$. Choose $f_h\in \mathcal{S}^{\mathrm{comp}}_{\delta}$. Then, the operator $f_h\big(\overline{\mathrm{Op}_h(s)}\big):\L^2(\R^n)\to \L^2(\R^n)$, defined by the spectral calculus for unbounded self-adjoint operators, is a semiclassical pseudodifferential operator for small $h$. More precisely, there is a symbol function $a\in \bigcap_{k\in \N}S_{\delta}(\mathfrak{m}^{-k})$ and a number $h_0\in (0,1]$ such that for $h\in (0,h_0]$
\[
f_h\Big(\overline{\mathrm{Op}_h(s)}\Big)=\mathrm{Op}_h(a).
\]
Moreover, if $s$ fulfills $
s\sim \sum_{j=0}^\infty h^j s_j \,\text{ in } S(\mathfrak{m})
$ 
for some sequence $\{s_j\}_{j=0,1,2,\ldots}\subset S(\mathfrak{m})$, then there is an asymptotic expansion in $S_{\delta}(1/\mathfrak{m})$
\bq
a\sim \sum_{j=0}^\infty a_j,\qquad a_j\in S_{\delta}^{j(2\delta-1)}(1/\mathfrak{m}),\label{eq:expansion19129}
\eq
where
\bq
a_j(y,\eta,h)=\frac{1}{(2j)!}\Big(\frac{\partial}{\partial t}\Big)^{2j}\big(q_j(y,\eta,t,h)f_h(t)\big)_{t=s_0(y,\eta,h)}\label{eq:localsymbexp}
\eq
for a sequence of polynomials $\{q_j(y,\eta,t,h)\}_{j=0,1,2,\ldots}$ in one variable $t\in \R$ with coefficients being $h$-dependent functions in $\Cinft(\R^{2n})$ and satisfying $q_0\equiv 1$. In particular, $$a_0(y,\eta,h)=f_h(s_0(y,\eta,h)).$$
\end{thm}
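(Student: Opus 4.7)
The plan is to follow the Helffer--Sj\"ostrand strategy used for the $h$-independent statement \cite[Theorem 8.7]{dimassi-sjoestrand} and carefully track the extra powers of $h$ introduced by the hypothesis $f_h\in\mathcal{S}^{\mathrm{comp}}_\delta$. The starting point is the formula
\[
f_h\Big(\overline{\mathrm{Op}_h(s)}\Big) = -\frac{1}{\pi}\int_{\C} \bar\partial\tilde f_h(z)\,\big(z-\overline{\mathrm{Op}_h(s)}\big)^{-1}\, L(dz),
\]
where $L(dz)$ is Lebesgue measure on $\C$ and $\tilde f_h\in \CT(\C)$ is an almost analytic extension of $f_h$. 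The resolvent exists for $z\in\C\setminus\R$ by Theorem \ref{thm:essself}, so the integrand is well defined; the task is to show that the integral is a pseudodifferential operator, to identify its symbol, and to control it uniformly in $h$.

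First, I would construct $\tilde f_h$ via the standard truncated-Taylor recipe
\[
\tilde f_h(x+iy) := \chi(y)\sum_{k=0}^N \frac{(iy)^k}{k!}\,f_h^{(k)}(x),
\]
with $\chi\in\CT(\R)$ identically $1$ near $0$, and use $\|f_h^{(k)}\|_\infty = \mathrm{O}(h^{-\delta k})$ to derive the quantitative bound
\[
|\bar\partial\tilde f_h(x+iy)|\leq C_N\, h^{-\delta(N+1)}\,|y|^N\qquad\forall\,N\in\N.
\]
The integer $N$ must stay at our disposal: it will be chosen large depending on the multiindex $\alpha$ whose derivative of $a$ one wishes to estimate. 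The polynomial bound on $\mathrm{diam}(\supp f_h)$ controls the $L(dz)$-measure of $\supp\tilde f_h$ polynomially in $h^{-1}$ and is absorbed without difficulty.

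Second, I would build a parametrix for $(z-\mathrm{Op}_h(s))^{-1}$ within the symbolic calculus, obtaining a symbol $b(z,\cdot,h)\in S(1/\mathfrak{m})$ with an expansion of the form
\[
b(z,\cdot,h)\sim \sum_{j=0}^\infty \frac{h^j\, q_j(y,\eta,h)}{(z-s_0(y,\eta,h))^{2j+1}}\quad\text{in } S(1/\mathfrak{m}),
\]
where $q_0\equiv 1$ and each $q_j$ is obtained from $s_0,\ldots,s_j$ via repeated applications of the composition formula (\ref{eq:composexpans}); the remainder after $N$ terms has operator norm bounded by a polynomial in $|\Im z|^{-1}$ times $h^{N+1}$. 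Substituting $b$ into the Helffer--Sj\"ostrand formula and invoking the pointwise identity
\[
-\frac{1}{\pi}\int_\C \bar\partial\tilde f_h(z)\,\frac{L(dz)}{(z-t)^{2j+1}} = \frac{1}{(2j)!}\,f_h^{(2j)}(t)\qquad\forall\,t\in\R,
\]
produces a symbol $a$ whose formal expansion has coefficients of the claimed shape (\ref{eq:localsymbexp}); in particular $a_0=f_h\circ s_0$. The crucial new observation is that combining the explicit $h^j$ from the parametrix with $f_h^{(2j)}=\mathrm{O}(h^{-2j\delta})$ yields a net factor $h^{j(1-2\delta)}$, placing $a_j$ in $S_\delta^{j(2\delta-1)}(1/\mathfrak{m})$ as asserted in (\ref{eq:expansion19129}) rather than in $S(1/\mathfrak{m})$. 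The membership $a\in\bigcap_k S_\delta(\mathfrak{m}^{-k})$ is obtained by the auxiliary-multiplier trick of \cite[Theorem 8.7]{dimassi-sjoestrand}, i.e.\ applying the same argument with $f_h(t)$ replaced by $(t+i)^{-M}f_h(t)$ for arbitrary $M$.

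I expect the main obstacle to be the quantitative bookkeeping that links these ingredients. Each derivative $\partial_{y,\eta}^\alpha$ hitting a term $h^j q_j/(z-s_0)^{2j+1}$ produces one extra negative power of $z-s_0$, and hence of $|\Im z|$; integrability against $\bar\partial\tilde f_h$ is recovered only by raising $N$, but each such unit increase of $N$ costs an extra factor $h^{-\delta}$ in the pointwise bound on $\bar\partial\tilde f_h$. The estimates must be organized so that these two opposing effects cancel to exactly the $h^{-\delta|\alpha|}$ allowed by the $S_\delta$-class, and together with the $h^j$ from the parametrix and the $h^{-2j\delta}$ from $f_h^{(2j)}$ yield the required remainder bound $a-\sum_{j\leq J}a_j\in S_\delta^{(J+1)(2\delta-1)}(1/\mathfrak{m})$. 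Once this uniform balance is in place, identifying $a_0$, verifying the decay in all powers of $\mathfrak{m}^{-1}$, and certifying the asymptotic expansion reduce to direct applications of the Cauchy-type formula above and of Theorem \ref{thm:compositionformula}.
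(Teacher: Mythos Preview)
Your approach is essentially the same as the paper's: both use the Helffer--Sj\"ostrand formula, the parametrix expansion of the resolvent, and the Cauchy-type evaluation of the resulting integrals. The paper differs in two organizational choices. First, it uses the Fourier-based almost analytic extension (the one from \cite[p.~94]{dimassi-sjoestrand}), which is a single function satisfying the $|\Im z|^N$ bound for every $N$ simultaneously, rather than your truncated-Taylor extension, which depends on $N$; either works, but the Fourier version spares you from juggling $N$-dependent extensions inside one integral. Second, rather than balancing $N$ directly against $|\alpha|$ as you outline, the paper splits the $z$-integral at $|\Im z|=h^\gamma$ for some $\gamma\in(\delta,\tfrac12)$, shows the near-real piece lies in $S^{-\infty}$, and first obtains only a crude preliminary bound $a\in S_\delta^{4\delta+2r}(1/\mathfrak{m})$ which is then bootstrapped to $S_\delta(1/\mathfrak{m})$ using the explicit form of the $a_j$. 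Your direct-balance plan should also succeed, but be aware that the parametrix numerators are actually polynomials in $z$ of degree $2j$ (not $z$-independent as you wrote), which is precisely why the final formula (\ref{eq:localsymbexp}) involves $\partial_t^{2j}\big(q_j(\cdot,t,\cdot) f_h(t)\big)$ via Leibniz rather than simply $q_j\, f_h^{(2j)}$.
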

\begin{proof}We will adapt the proof of Dimassi and Sj\"ostrand of Theorem \ref{thm:funccalcrn}, extending it to $h$-dependent functions $f_h\in\mathcal{S}^{\mathrm{comp}}_{\delta}$. Let us briefly recall the main steps in the proof of Theorem \ref{thm:funccalcrn}. First, one uses the \emph{Helffer-Sj\"ostrand formula} to express $f\big(\overline{\mathrm{Op}_h(s)}\big)$ as a complex integral which involves the resolvent of the operator $\overline{\mathrm{Op}_h(s)}$ and an almost analytic extension of $f$. Then one proves that the resolvent is a semiclassical pseudodifferential operator, and that its symbol has a certain asymptotic expansion whose terms are then plugged into the Helffer-Sj\"ostrand formula. That the resulting term-wise integrals exist and that they define elements of appropriate symbol classes is proved using the properties of the almost analytic extension of $f$. Finally, the precise algebraic form of the resulting symbol expansion for $f\big(\overline{\mathrm{Op}_h(s)}\big)$ is obtained by replacing the complex integrals in the expansion up to a negligible remainder by integrals that one can evaluate using the Cauchy integral formula. We will now precisely study the relevant steps in the proof of Theorem \ref{thm:funccalcrn} and check how they need to be generalized or modified to work also for $h$-dependent functions $f_h$ satisfying our regularity conditions. As mentioned before, the first step in the proof of Theorem \ref{thm:funccalcrn} is the \emph{Helffer-Sj\"ostrand formula}, see \cite[Theorem 8.1]{dimassi-sjoestrand}. Thus, let $P$ be a self-adjoint operator on a Hilbert space $\mathcal{H}$. Let $f\in {\mathrm C}^2_c(\R)$ and let $\tilde{f}\in {\mathrm C}^1_c(\C)$ be an extension of $f$ with $\bar \partial_z \tilde{f}(z)=\mathrm{O}(|\text{Im }z|)$, where $\bar \partial_z=\frac{1}{2}(\partial_x+i\partial_y)$ with the notation $z=x+iy$. Then
\bq
f(P)=\frac{-1}{\pi}\intop_\C \bar \partial_z \tilde{f}(z)(z-P)^{-1}\d z, \label{eq:helfsj}
\eq
where $dz$ denotes the Lebesgue measure on $\C$, and the integral is a Riemann integral for functions with values in $\B(\mathcal{H})$. The statement \eqref{eq:helfsj} is of course applicable to $f=f_h\in {\mathrm C}^2_c(\R)$ for each $h\in (0,1]$ separately. No generalization is needed here. The second key step in the proof of Theorem \ref{thm:funccalcrn} is the following Lemma, see \cite[Prop.\ 8.6]{dimassi-sjoestrand}. Let $s$ be a symbol function as in Theorem \ref{thm:essself}. Then for each $z\in \C$, $\text{Im }z\neq 0$,  there is a symbol function $r_z\in S(1_{\R^{2n}})$ such that 
\bq
\Big(z-\overline{\mathrm{Op}_h(s)}\Big)^{-1}=\mathrm{Op}_h(r_z). \label{eq:ophs}
\eq
The family $\{r_z\}$, indexed by $z$, has the property that for each $2n$-dimensional non-negative multiindex $\alpha$, there is a constant $C_\alpha>0$ such that for  $|z|\leq \text{const.}$, one has
\bq
|\partial_v^\alpha r_z(v,h)|\leq C_\alpha \max\Big(1, \frac{h^{1/2}}{|\text{Im }z|}\Big)^{2n+1}|\text{Im }z|^{-|\alpha|-1}\quad \forall\; v\in \R^{2n},\;h\in (0,h_0], \label{eq:restimates}
\eq
where the number $h_0\in (0,1]$ is the same as in Theorem \ref{thm:essself}. The lemma does not involve the function $f$ at all, so obviously it does not need to be modified when $f=f_h$. The third step, which in combination with the Helffer-Sj\"ostrand formula and (\ref{eq:ophs}), (\ref{eq:restimates}) yields that $f\big(\overline{\mathrm{Op}_h(s)}\big)$ is a semiclassical pseudodifferential operator for small $h$, is given by the assertion that
\bq
\intop_{|\text{Im }z|\leq h^\gamma} \bar \partial_z \tilde{f}(z)\, r_z\d z\in S^{-\infty}(1_{\R^{2n}})\quad \forall\;\gamma>0,\;\forall\;f\in \CT(\R). \label{eq:helfsjsymb}
\eq
It is proved using (\ref{eq:restimates}) and a particular choice for the extension map $f\mapsto \tilde f$  in the Helffer-Sj\"ostrand formula. Specifically, one considers the extension\footnote{The multiplication with a cutoff function $\chi(y)$ in front of the integral is only implicitly mentioned at \cite[p.\ 93/94]{dimassi-sjoestrand}.}
\bq
\tilde f(x+iy)=\frac{\psi_f(x)\chi(y)}{2\pi}\intop_\R e^{i(x+iy)\xi}\chi(y\xi)\mathcal{F}(f)(\xi)\d \xi,\label{eq:extension}
\eq
where $\mathcal{F}(f)$ is the Fourier transform of $f$, $\chi\in \CT(\R,[0,1])$ is equal to $1$ on $[-1,1]$, and  $\psi_f\in \CT(\R)$ is equal to $1$ in a neighborhood of $\supp f$. The main feature of the extension map $\CT(\R)\to \CT(\C)$ defined by (\ref{eq:extension}) is that the functions $\tilde f$ in its image are \emph{almost analytic}, meaning that for each $N\in\N$ there is a constant $C_N>0$ such that
\bq
|\bar \partial _z  \tilde f(z)|\leq C_N |\text{Im }z|^N\qquad \forall \;z\in \C. \label{eq:almostanalytic}
\eq
Now suppose that $f=f_h$. This time we cannot just apply the existing results separately to $f_h$ for each $h$, because (\ref{eq:helfsjsymb}) is a statement about uniform estimates in $h\in (0,1]$. Of course, for each individual $h$, we obtain an almost analytic extension $\tilde f_h$ for which (\ref{eq:almostanalytic}) holds. However, the constant $C_N$  appearing in the inequality for some $\tilde f_h$ depends on the function $f_h$, and so in particular $C_N=C_N(h)$ depends on $h$. Since we need estimates that are uniform for $h\in (0,1]$, we cannot directly use (\ref{eq:almostanalytic}) when $f=f_h$. Instead, we study the proof of (\ref{eq:almostanalytic}) to deduce a more precise estimate with constants that are independent of the function $f$. In order to do this, let us make the additional assumptions about the function $\psi_f$ that we have $|\psi_f|\leq 1, |\psi_f'|\leq 1$, and that $\psi_f\equiv 1$ in a closed interval $I_f=[m_f,M_f]\subset \R$ whose endpoints have distance $1$ to the support of $f$, and that $\psi_f=0$ outside $[m_f-2,M_f+2]$. 
 As in \cite[p.\ 94]{dimassi-sjoestrand}, one calculates for each $N\in \N$ and $f\in \CT(\R)$
\begin{multline}
\bar \partial _z  \tilde f(x+iy)=y^N\frac{i}{4\pi}\Big(\psi_f(x)\intop_\R e^{i(x+iy)\xi}\chi_N(y\xi)\xi^{N+1}\mathcal{F}(f)(\xi)\d \xi\\
+ \psi_f'(x)\intop_\R\intop_\R e^{i(x-\widetilde{x}+iy)\xi}\frac{\chi_N(y\xi)}{(\xi +i)^2}(i+D_{\widetilde x})^2D_{\widetilde x}^N\Big( \frac{f(\widetilde x)}{x-\widetilde{x}+iy} \Big)\d \widetilde{x} \d \xi
\Big)\quad \forall\; y\in [-1,1],\label{eq:estim564564}
\end{multline}
where we used the notation $\chi_N(t):= t^{-N}\chi'(t)$.\footnote{Note that $\chi'=0$ in a neighborhood of $0$.} Due to our assumptions on $\psi_f$, $|x-\widetilde x|$ is bounded from below by $1$ on the support of $\psi_f'(x)f(\widetilde x)$, and we obtain from  (\ref{eq:estim564564})
\bq
|\bar \partial _z  \tilde f(x+iy)| \leq |y|^NC_N\Big(\norm{\xi^{N+1}\mathcal{F}(f)}_{\L^1(\R)}
+\max_{0\leq j \leq N+2}\norm{f^{(j)}}_{\L^1(\R)}\Big)\quad \forall\; y\in [-1,1],\label{eq:estim987987987}
\eq
where $C_N>0$ is independent of $f$. Now we observe $(i\xi)^{N+1}\mathcal{F}(f)=\mathcal{F}\big(f^{(N+1)}\big)$, and in addition we note that for every Schwartz function $f$ on $\R$ and every $j\in \{0,1,2,\ldots\}$
\bq
\norm{\mathcal{F}\big(f^{(j)}\big)}_{\L^1(\R)}\leq C_j\max_{0\leq k \leq 2}\norm{f^{(j+k)}}_{\L^1(\R)},\label{eq:estim214698}
\eq
with $C_j>0$ independent of $f$, see e.g.\ \cite[Lemma 3.5]{zworski}. Moreover, for a function with support of finite volume, we have the standard integral estimate 
\bq
\norm{f^{(l)}}_{\L^1(\R)}\leq \mathrm{vol}(\supp f)\norm{f^{(l)}}_{\infty}\quad \forall\; l\in\{0,1,2,\ldots\}.\label{eq:estim34263856278}
\eq
Taking into account the estimates (\ref{eq:estim214698}) and (\ref{eq:estim34263856278}), the result (\ref{eq:estim987987987}) turns into
\bq
|\bar \partial _z  \tilde f(z)| \leq C_N |\text{Im } z|^N\mathrm{vol}(\supp f) \max_{0\leq j \leq N+3}\norm{f^{(j)}}_{\infty}\qquad \forall\;z\in \C,\;|\text{Im }z|\leq 1\label{eq:betterestim2424}
\eq
with new constants $C_N>0$ that are independent of $f$. The estimate (\ref{eq:betterestim2424}) is exactly the modified version of  (\ref{eq:almostanalytic}) that we were looking for. Setting $f=f_h$, we get for each $N\in \N$ and all $z\in \C$ with $|\text{Im }z|\leq 1$
\bq
|\bar \partial _z  \tilde f_h(z)| \leq C_N |\text{Im } z|^N\mathrm{vol}(\supp f_h) \max_{0\leq j \leq N+3}\norm{f_h^{(j)}}_{\infty}\qquad \forall\; h\in (0,1],\label{eq:betterestim4545}
\eq
with $C_N>0$ independent of $h$. We will now use (\ref{eq:betterestim4545}) to prove
\bq
\intop_{|\text{Im }z|\leq h^\gamma} \bar \partial_z \tilde{f}_h(z)\, r_z\d z\in S^{-\infty}(1_{\R^{2n}})\quad  \forall\;\gamma>\delta,\;f_h\in \mathcal{S}^{\mathrm{comp}}_{\delta}\label{eq:helfsjsymbnew}
\eq
which is slightly weaker than the $f_h$-version of (\ref{eq:helfsjsymb}), but sufficient for our purposes. Recall that the statement  (\ref{eq:helfsjsymbnew}) means that for each $h\in (0,1]$ the function on $\R^{2n}$ defined by the integral is smooth and one has for all $v\in \R^{2n},\; h\in (0,1],\;\gamma>\delta,\;N=0,1,\ldots:$
\bq
\bigg|\partial_v^\alpha\intop_{|\text{Im }z|\leq h^\gamma} \bar \partial_z \tilde{f}_h(z)\, r_z(v,h)\d z\bigg| \leq C_{N,\alpha,\gamma} h^N\label{eq:want2}
\eq
with constants $C_{N,\alpha,\gamma}>0$.
As $\partial_z \tilde{f}_h$ is compactly supported, and the integrand is smooth, we can interchange integration and differentiation, so that the function on $\R^{2n}$ defined by the integral is indeed smooth for each $h$. Let now $\gamma>\delta$. Then (\ref{eq:restimates}) and (\ref{eq:betterestim4545}) imply that there is a number $h_0\in (0,1]$ such that for each $N=0,1,2,\ldots$ and each $\alpha$ there is a $C_{N,\alpha}>0$ such that for all $h\in (0,h_0]$
\begin{align*}
&\bigg|\partial_v^\alpha\intop_{|\text{Im }z|\leq h^\gamma} \bar \partial_z \tilde{f}_h(z)\, r_z(v,h)\d z\bigg|
\leq\intop_{|\text{Im }z|\leq h^\gamma} \big|\bar \partial_z \tilde{f}_h(z)\, \partial_v^\alpha r_z(v,h)\big|\d z\\
&\leq C_{N,\alpha}\mathrm{vol}(\supp f_h) \max_{0\leq j \leq N+3}\norm{f_h^{(j)}}_{\infty} \hspace*{-0.5em}\intop_{\begin{matrix}\scriptstyle |\text{Im }z|\leq h^\gamma\\
\scriptstyle z\in \supp \bar \partial_z \tilde{f}_h\end{matrix}}\hspace*{-0.5em} \max\Big(1, \frac{h^{1/2}}{|\text{Im }z|}\Big)^{2n+1}|\text{Im }z|^{-|\alpha|-1}|\text{Im } z|^N\d z.
\end{align*}
For each $N\geq |\alpha|+2n+2$, we can estimate the final integral according to
\begin{align*}
&\intop_{\begin{matrix}\scriptstyle |\text{Im }z|\leq h^\gamma\\
\scriptstyle z\in \supp \bar \partial_z \tilde{f}_h\end{matrix}} \max\Big(1, \frac{h^{1/2}}{|\text{Im }z|}\Big)^{2n+1}|\text{Im }z|^{-|\alpha|-1}|\text{Im } z|^N\d z\\
&=\intop_{\begin{matrix}\scriptstyle h^{1/2} \leq |\text{Im }z|\leq h^\gamma\\
\scriptstyle z\in \supp \bar \partial_z \tilde{f}_h\end{matrix}} |\text{Im }z|^{-|\alpha|-1}|\text{Im } z|^N\d z \;+\; h^{n+1/2}\hspace{-0.5cm} \intop_{\begin{matrix}\scriptstyle |\text{Im }z|< h^{1/2}\\
\scriptstyle z\in \supp \bar \partial_z \tilde{f}_h\end{matrix}}|\text{Im }z|^{-|\alpha|-2n-2}|\text{Im } z|^N\d z\\
&\leq C'_{\alpha,N}\mathrm{vol}_\C\big( \supp \bar \partial_z \tilde{f}_h\big)\big(h^{\gamma(N-|\alpha|-1)}+h^{n+1/2(1+N-|\alpha|-2n-2)}  \big)\\
&\leq 2C'_{\alpha,N}\mathrm{vol}_\C\big(\supp \bar \partial_z \tilde{f}_h\big)\big(h^{\min(\gamma,1/2)N-\max(\gamma,1/2)(|\alpha|+1)} \big).
\end{align*}
Turning our attention to the term $ \mathrm{vol}_\C \text{ supp }\bar \partial_z \tilde{f}_h$, note that by construction of the almost analytic extension $\tilde{f}_h$ \bq
\supp \bar \partial_z \tilde{f}_h\subset  \supp  \tilde{f}_h \subset  (\supp \psi_{f_h})\times [L,L]i\subset \C,\label{eq:2893579}
\eq where $L>0$ is some constant depending only on the cutoff function $\chi$. Due to our assumptions on the function $\psi_{f_h}$ in the paragraph before (\ref{eq:estim564564}) one has
\bq
\mathrm{vol}\big(\supp \psi_{f_h}\big)\leq \mathrm{diam}(\supp f_h) + 2 + 2\label{eq:289357933}
\eq
and we obtain $\mathrm{vol}_\C  \big((\supp \psi_{f_h})\times [L,L]i\big)\leq 2 L (\mathrm{diam}(\supp f_h) + 4)$. Collecting all estimates together yields for $h\in (0,h_0]$ and $N\geq |\alpha|+2n+2$
\begin{multline}
\bigg|\partial_v^\alpha\intop_{|\text{Im }z|\leq h^\gamma} \bar \partial_z \tilde{f}_h(z)\, r_z(v,h)\d z\bigg|
\leq C_{N,\alpha} h^{\min(\gamma,1/2)N-\max(\gamma,1/2)(|\alpha|+1)}\\
\mathrm{vol}(\supp f_h)(1+\mathrm{diam}(\supp f_h)) \max_{0\leq j \leq N+3}\norm{f_h^{(j)}}_{\infty}\label{eq:intermediateresult1}
\end{multline}
for some new constant $C_{N,\alpha}$ which is independent of $h$. We now use the regularity conditions on the function $(t,h)\mapsto f_h(t)$ encoded in the assumption $f_h\in \mathcal{S}^{\mathrm{comp}}_{\delta}$. The condition that the function is in $S_{\delta}(1_\R)$ yields $$\max_{0\leq j \leq N+3}\norm{f_h^{(j)}}_{\infty}=\mathrm{O}\big(h^{-(N+3)\delta}\big)\qquad \text{as } h\to 0,$$
and because the diameter of the support of $f_h$ grows at most polynomially in $h^{-1}$ as $h\to 0$, there is a constant $r\geq 0$ such that $\mathrm{vol}(\supp f_h)(1+\mathrm{diam}(\supp f_h))=\mathrm{O}\big(h^{-r})$ as $h\to 0$. Thus, we conclude
\bq
\bigg|\partial_v^\alpha\intop_{|\text{Im }z|\leq h^\gamma} \bar \partial_z \tilde{f}_h(z)\, r_z(v,h)\d z\bigg|\leq C_{N,\alpha}\, h^{N(\min(\gamma,1/2)-\delta)-\max(\gamma,1/2)(|\alpha|+1)-3\delta -r}\label{eq:intermediateresult2}
\eq
with a new constant $C_{N,\alpha}$. Given $N'\in \N$, we can set $N(N'):=\lceil\frac{N'+\max(\gamma,1/2)(|\alpha|+1)+3\delta +r}{\min(\gamma,1/2)-\delta} \rceil$ to obtain
\bq
\bigg|\partial_v^\alpha\intop_{|\text{Im }z|\leq h^\gamma} \bar \partial_z \tilde{f}_h(z)\, r_z(v,h)\d z\bigg|\leq C_{N(N'),\alpha}\, h^{N'}\quad \forall \;h\in(0,h_0].\label{eq:intermediateresult3}
\eq
Thus, after performing a global rescaling $h':=h/h_0$, we have shown (\ref{eq:want2}), or equivalently (\ref{eq:helfsjsymbnew}). The next intermediate result in the proof of Theorem \ref{thm:funccalcrn} that we want to generalize involves the integral over the whole complex plane. Namely, one easily obtains
\bq
\intop_{\C} \bar \partial_z \tilde{f}(z)\, r_z\d z\in S(1_{\R^{2n}})\quad \forall\;f\in \CT(\R) \label{eq:easys0}
\eq
by taking into account that the integrand has compact support and estimating its $\L^\infty$-norm using (\ref{eq:restimates}) and (\ref{eq:almostanalytic}). Just as (\ref{eq:helfsjsymb}), (\ref{eq:easys0}) is a statement about uniform estimates in $h\in (0,1]$, so it does not directly generalize to $h$-dependent functions. We would like to prove
\bq
\intop_{\C} \bar \partial_z \tilde{f}_h(z)\, r_z\d z\in S_{\delta}(1_{\R^{2n}})\quad \forall\;f_h\in \mathcal{S}^{\mathrm{comp}}_{\delta}. \label{eq:desired0}
\eq
Let us try to prove (\ref{eq:desired0}) in the same way as (\ref{eq:easys0}) by estimating the $\L^\infty$-norm of the integrand and using that the integrand has compact support. With (\ref{eq:want2}), we can write
\begin{multline*}
\bigg|\partial_v^\alpha \intop_{\C} \bar \partial_z \tilde{f}_h(z)\, r_z(v,h)\d z\bigg|
=\bigg|\partial_v^\alpha \intop_{|\text{Im }z|\leq h^{1/2}} \bar \partial_z \tilde{f}_h(z)\, r_z(v,h)\d z+\partial_v^\alpha \intop_{|\text{Im }z|> h^{1/2}} \bar \partial_z \tilde{f}_h(z)\, r_z(v,h)\d z\bigg|\\
\leq \intop_{\begin{matrix}\scriptstyle |\text{Im }z|> h^{1/2}\\
\scriptstyle z\in \supp \bar \partial_z \tilde{f}_h\end{matrix}}\big| \bar \partial_z \tilde{f}_h(z)\,\partial_v^\alpha r_z(v,h)\big|\d z + \mathrm{O}(h^\infty),
\end{multline*}
the $\mathrm{O}(h^\infty)$ estimate being uniform in $v$. Using (\ref{eq:restimates}), (\ref{eq:betterestim4545}), (\ref{eq:2893579}), and (\ref{eq:289357933}), it follows that with $N=|\alpha| +1$
\begin{align*}
\bigg|\partial_v^\alpha \intop_{\C} \bar \partial_z \tilde{f}_h(z)\, r_z(v,h)&\d z\bigg|
\leq C_{N,\alpha}\mathrm{vol}(\supp f_h)\\
&\qquad\qquad\max_{0\leq j \leq N+3}\norm{f_h^{(j)}}_{\infty} \intop_{\begin{matrix}\scriptstyle |\text{Im }z|> h^{1/2}\\
\scriptstyle z\in \supp \bar \partial_z \tilde{f}_h\end{matrix}} |\text{Im }z|^{-|\alpha|-1}|\text{Im } z|^N\d z
 + \mathrm{O}(h^\infty)\\
&\leq  C_{|\alpha|+1,\alpha}\mathrm{vol}(\supp f_h) \max_{0\leq j \leq |\alpha|+1+3}\norm{f_h^{(j)}}_{\infty}  \text{vol supp }\bar \partial_z \tilde{f}_h + \mathrm{O}(h^\infty)\\
&\leq C_{\alpha,\delta}\mathrm{vol}(\supp f_h)(1+\mathrm{diam}(\supp f_h)) h^{-\delta(|\alpha|+4)} + \mathrm{O}(h^\infty)\\
&=\mathrm{O}\big(h^{-\delta(|\alpha|+4)-2r}\big),
\end{align*}
where $r>0$ is chosen such that the diameter of the support of $f_h$ is of order $h^{-r}$ as $h\to 0$. Thus, we arrive at the statement
\bq
\intop_{\C} \bar \partial_z \tilde{f}_h(z)\, r_z\d z\in S^{4\delta+2r}_{\delta}(1_{\R^{2n}})\quad \forall\;f_h\in \mathcal{S}^{\mathrm{comp}}_{\delta} \label{eq:notdesired0}
\eq
which is considerably weaker than (\ref{eq:desired0}). Temporarily, (\ref{eq:notdesired0}) will be sufficient to continue with the proof, and we will deduce (\ref{eq:desired0}) later. As in the proof of Theorem \ref{thm:funccalcrn}, we deduce from (\ref{eq:notdesired0})
\bq
\intop_{\C} \bar \partial_z \tilde{f}_h(z)\, r_z\d z\in S^{4\delta+2r}_{\delta}(\mathfrak{m}^{-k})\qquad \forall\;k\in\{0,1,2,\ldots\}\label{eq:newnotdesired}
\eq
by writing $f_{h,k}(t):=(t+i)^{k}f_h(t)$ and observing that $$f_h(\overline{\mathrm{Op}_h(s)})=\big(\overline{\mathrm{Op}_h(s)}+i\big)^{-k}f_{h,k}\big(\overline{\mathrm{Op}_h(s)}\big),$$
see \cite[Thm.\ 8.7]{dimassi-sjoestrand}. To proceed, fix some $\gamma\in (\delta,\frac{1}{2})$. It is shown in the proof of Theorem \ref{thm:funccalcrn} that if $|z|\geq h^\gamma$, $|z|\leq \text{const.}$, the function $r_z$ belongs to the symbol class $S_{\gamma}^\gamma(\mathfrak{m}^{-1})$ with estimates that are uniform in $z$, and $r_z$ has an expansion in $S_{\gamma}^\gamma(\mathfrak{m}^{-1})$ of the form
\[
r_z(y,\eta,h)\sim \sum_{j=0}^\infty h^j\sum_{k=0}^{2j} \frac{q_{j,k}(y,\eta,h)\,z^k}{(z-s_0(y,\eta,h))^{2j+1}},\qquad q_{j,k}\in S(\mathfrak{m}^{2j-k}),\qquad q_{0,0}\equiv 1,
\]
with uniform estimates on the domain $|z|\geq h^\gamma$, $|z|\leq \text{const.}$, see \cite[p.\ 102]{dimassi-sjoestrand}. Thus, by (\ref{eq:helfsj}), (\ref{eq:ophs}), (\ref{eq:want2}), and (\ref{eq:newnotdesired}) one obtains $f_h(\overline{\mathrm{Op}_h(s)})=\mathrm{Op}_h(a)$ with $a\in S^{4\delta+2r}_{\delta}(\mathfrak{m}^{-1})$ having an asymptotic expansion in $S^{4\delta+2r}_{\delta}(\mathfrak{m}^{-1})$ 
\bq
a\sim \sum_{j=0}^\infty h^j\, \widetilde{a}_j,\qquad \widetilde{a}_j(y,\eta,h)=\frac{-1}{\pi}\intop_{|z|\geq h^\gamma}\bar \partial_z \tilde{f}_h(z)\,\frac{q_j(y,\eta,z,h)}{(z-s_0(y,\eta,h))^{2j+1}}\d z,\label{eq:expansion27}
\eq
where we wrote $q_j(y,\eta,z,h):=\sum_{k=0}^{2j}q_{j,k}(y,\eta,h)\,z^k$. 
For the same reason why (\ref{eq:helfsjsymbnew}) holds, one can replace each $\widetilde{a}_j$ up to an error in $S^{-\infty}(\mathfrak{m}^{-1})$ by
\begin{align*}
a_j(y,\eta,h)&=\frac{-1}{\pi}\intop_{\C}\bar \partial_z \tilde{f}_h(z)\,\frac{q_j(y,\eta,z,h)}{(z-s_0(y,\eta,h))^{2j+1}}\d z\\
&=\frac{1}{(2j)!}\Big(\frac{\partial}{\partial t}\Big)^{2j}\big(q_j(y,\eta,t,h)f_h(t)\big)_{t=s_0(y,\eta,h)},
\end{align*}
where the evaluation of the complex integral between the first and the second line is as in \cite[(8.16) on p.\ 103]{dimassi-sjoestrand}. 
We obtain 
\bq
a\sim \sum_{j=0}^\infty h^j\, a_j\quad \text{in }S^{4\delta+2r}_{\delta}(\mathfrak{m}^{-1}).\label{eq:expansion28}
\eq
Now, since $f_h$ is an element of $S_{\delta}(1_\R)$ and only derivatives of $f_h$ of order at most $2j$ occur in $a_j$, we conclude that $a_j\in S^{2j\delta}_{\delta}(\mathfrak{m}^{-1})$. Therefore, the expansion (\ref{eq:expansion28}) implies that $a$ is in fact an element of $S_{\delta}(\mathfrak{m}^{-1})\subset S^{4\delta+2r}_{\delta}(\mathfrak{m}^{-1})$ and has the same expansion in $S_{\delta}(\mathfrak{m}^{-1})$. Thus, the evaluation of the complex integrals in the individual terms of the expansion (\ref{eq:expansion27}) has finally provided a proof for (\ref{eq:desired0}). Just as we deduced (\ref{eq:newnotdesired}) from (\ref{eq:notdesired0}), we deduce from (\ref{eq:desired0}) that $a\in \bigcap_{k\in \N}S_{\delta}(\mathfrak{m}^{-k})$.
\end{proof}
As a corollary, we get a semiclassical trace formula that generalizes Theorem \ref{thm:traceformrn}.
\begin{cor}
Let  $\mathfrak{m}:\R^{2n}\to (0,\infty)$ be an order function with $\mathfrak{m}\geq 1$, and $s\in S(\mathfrak{m})$ be a real-valued symbol function with an asymptotic expansion \[
s\sim \sum_{j=0}^\infty h^j s_j \,\text{ in } S(\mathfrak{m})
\]
such that $s+i$ is $\mathfrak{m}$-elliptic. Let $I\subset \R$ be a bounded open interval with
\[
\liminf_{\norm{v} \to +\infty}\mathrm{dist}(s(v,h),I)\geq C\qquad \forall\; h\in (0,1]\]
for a constant $C>0$ that is independent of $h$, and let $f_h\in \CT(I)\subset \CT(\R)$ be given such that the function $(t,h)\mapsto f_h(t)$ is an element of the symbol class $S_{\delta}(1_\R)$ for some $\delta\in [0,\frac{1}{2})$. 
Then, the operator $f_h\big(\overline{\mathrm{Op}_h(s)}\big):\L^2(\R^n)\to \L^2(\R^n)$ is of trace class for small $h$, and as $h\to 0$ one has
\bq
\mathrm{tr}_{\L^2(\R^n)}f_h\Big(\overline{\mathrm{Op}_h(s)}\Big)=\frac{1}{(2\pi h)^n}\intop_{\R^{2n}}f_h\big(s_0(y,\eta,h)\big)\d y \d \eta 
+ \mathrm{O}\Big(h^{1-2\delta-n}\mathrm{vol}_{\,\R^{2n}}\big(\supp f_h\circ s_0(\cdot,h)\big)\Big).
\eq
\end{cor}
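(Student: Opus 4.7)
The plan is to invoke Theorem \ref{thm:semiclassfunccalcrn} to realize $f_h\big(\overline{\Op_h(s)}\big)=\Op_h(a)$ for a symbol $a\in\bigcap_{k\in\N}S_{\delta}(\mathfrak{m}^{-k})$ admitting the asymptotic expansion
\[
a\sim\sum_{j\geq 0}a_j\quad\text{in }S_{\delta}(1/\mathfrak{m}),\qquad a_j\in S_{\delta}^{j(2\delta-1)}(1/\mathfrak{m}),\qquad a_0=f_h\circ s_0,
\]
and then to apply the trace integral formula (\ref{eq:trace11111111}). The first step is to track the support of each $a_j$: by the explicit formula (\ref{eq:localsymbexp}) together with the Leibniz rule, every summand of $a_j$ contains some derivative $f_h^{(\ell)}$ evaluated at $s_0(y,\eta,h)$, and since $\supp f_h^{(\ell)}\subset\supp f_h$ for all $\ell$, this forces
\[
\supp a_j(\cdot,h)\subset\{(y,\eta)\in\R^{2n}:s_0(y,\eta,h)\in\supp f_h\}
\]
for every $j\geq 0$.

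Exactly as in the proof of Theorem \ref{thm:traceformrn}, the distance hypothesis on $s$, $\mathfrak{m}$-ellipticity of $s+i$, and $s-s_0=\mathrm{O}(h)$ in $S(\mathfrak{m})$ combine to show that for small $h$ this set lies in a fixed $h$-independent compact $K\subset\R^{2n}$, and coincides with $\supp f_h\circ s_0(\cdot,h)$ up to an $\mathrm{O}(h)$-thickening whose volume is at most a uniformly bounded multiple of $\mathrm{vol}_{\,\R^{2n}}\big(\supp f_h\circ s_0(\cdot,h)\big)$. In particular $\Op_h(a)$ is of trace class by the criterion of \cite[p.\,113]{dimassi-sjoestrand}, and (\ref{eq:trace11111111}) delivers
\[
\mathrm{tr}_{\L^2(\R^n)}\Op_h(a)=(2\pi h)^{-n}\int_{\R^{2n}}a(y,\eta,h)\,dy\,d\eta.
\]

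To extract the claimed asymptotic, I split $a=a_0+(a-a_0)$. The leading term integrates to $\int f_h\circ s_0$ and produces the principal contribution. For the remainder, I fix $N$ large and write $a-a_0=\sum_{j=1}^N a_j+R_N$ with $R_N\in S_{\delta}^{(N+1)(2\delta-1)}(\mathfrak{m}^{-k})$ for every $k\in\N$. The pointwise bound $|a_j|\leq Ch^{j(1-2\delta)}$ on $K$ (using $\mathfrak{m}\geq 1$), combined with the support inclusion, yields
\[
\Big|\int_{\R^{2n}}\sum_{j=1}^N a_j\,dy\,d\eta\Big|\leq C\,h^{1-2\delta}\,\mathrm{vol}_{\,\R^{2n}}\big(\supp f_h\circ s_0(\cdot,h)\big)
\]
for small $h$, the $j=1$ term dominating since $1-2\delta>0$. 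The tail $R_N$ integrates to $\mathrm{O}(h^{(N+1)(1-2\delta)})$ by using the $\mathfrak{m}^{-k}$-decay for $k$ large together with the boundedness of the effective support on $K$, which is $\mathrm{O}(h^\infty)$ for $N$ sufficiently large. Dividing by $(2\pi h)^n$ produces the stated formula.

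The principal obstacle is that although each $a_j$ is localized to $\{s_0\in\supp f_h\}$, the symbol $a$ itself agrees with its partial sums only modulo $S^{-\infty}$-remainders which a priori enjoy no support control. This is resolved by exploiting the rapid $\mathfrak{m}^{-k}$-decay furnished by Theorem \ref{thm:semiclassfunccalcrn} for arbitrary $k$, together with the $\mathfrak{m}$-ellipticity of $s+i$, which force the integral of such remainders over $\R^{2n}$ to be genuinely $\mathrm{O}(h^\infty)$ and hence dominated by the $h^{1-2\delta}$ contribution of $a_1$. A secondary technical point is that the $\mathrm{O}(h)$-thickening needed to pass from $\{s_0\in\supp f_h\}$ to $\supp f_h\circ s_0(\cdot,h)$ inflates the volume only by a uniformly bounded factor, which is immediate from the symbol estimates on $s-s_0$ on the compact $K$.
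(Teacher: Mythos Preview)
Your overall strategy—invoke Theorem~\ref{thm:semiclassfunccalcrn}, observe that the expansion terms $a_j$ are all supported in the fixed compact $K=\{v:s_0(v,h)\in\supp f_h\}$, then integrate the symbol—matches the paper's route. However, there is a genuine gap in how you dispose of the tail $R_N=a-\sum_{j\leq N}a_j$, and the same gap undercuts your appeal to the trace-class criterion of \cite[p.\,113]{dimassi-sjoestrand}.

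The problem is integrability. Outside $K$ you have $R_N=a$, and the only global decay you can extract from Theorem~\ref{thm:semiclassfunccalcrn} is $|a(v,h)|\leq C_k\,\mathfrak{m}(v)^{-k}$ (and $|R_N|\leq C_N h^{(N+1)(1-2\delta)}\mathfrak{m}^{-1}$ from the expansion in $S_\delta(1/\mathfrak{m})$). Neither bound makes $a$ or $R_N$ lie in $L^1(\R^{2n})$: for the prototypical order function $\mathfrak{m}(y,\eta)=\langle\eta\rangle^2$, the function $\mathfrak{m}^{-k}$ has no decay whatsoever in $y$, so $\int_{\R^{2n}}\mathfrak{m}^{-k}=\infty$ for every $k$. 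The $\mathfrak{m}$-ellipticity of $s+i$ only tells you $\mathfrak{m}\leq\epsilon^{-1}|s+i|$, which bounds $\mathfrak{m}^{-k}$ from below, not above. So your claim that ``the rapid $\mathfrak{m}^{-k}$-decay \dots\ together with the $\mathfrak{m}$-ellipticity of $s+i$ \dots\ force the integral of such remainders to be $\mathrm{O}(h^\infty)$'' does not go through, and you cannot invoke \eqref{eq:trace11111111} for $a$ directly. (Relatedly, the assertion $R_N\in S_\delta^{(N+1)(2\delta-1)}(\mathfrak{m}^{-k})$ for all $k$ is stronger than what Theorem~\ref{thm:semiclassfunccalcrn} states; the expansion there is only in $S_\delta(1/\mathfrak{m})$.)

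The fix, and the reason the paper points to Corollary~\ref{cor:supercor}, is to manufacture genuine \emph{spatial} decay via a composition with a compactly supported symbol. Concretely (this is the content of \cite[proof of Thm.\ 9.6]{dimassi-sjoestrand}): take a fixed $g\in\CT(\R)$ with $g\equiv1$ on $\overline I$, so that $f_h(\overline{\Op_h(s)})=g(\overline{\Op_h(s)})\,f_h(\overline{\Op_h(s)})=\Op_h(b)\Op_h(a)$ with $b\sim\sum h^j b_j$ and each $b_j$ compactly supported in a fixed $K'\supset K$. Truncating $b$ to $\sum_{j\leq M}h^jb_j$ gives a genuinely compactly supported symbol, and by Corollary~\ref{cor:supercor} (equivalently the $\dist(v,K')^{-N}$ estimate recorded just before it) the product $\big(\sum_{j\leq M}h^jb_j\big)\sharp a$ has rapid polynomial decay in $|v|$ outside $K'$, hence lies in $L^1(\R^{2n})$ with all derivatives. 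A short Neumann-series argument then controls the contribution of $\Op_h(b-\sum h^jb_j)$ in \emph{operator} norm against the now-established trace-class bound, and the trace computation reduces to integrating compactly supported symbols, where your estimates for $\sum_{j\leq N}a_j$ apply verbatim.

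A minor point: the ``$\mathrm{O}(h)$-thickening'' remark is unnecessary and somewhat confused. One always has $\supp(f_h\circ s_0)\subset\{s_0\in\supp f_h\}$, so the volume bound you need goes the easy way; no passage between $s$ and $s_0$ is required there.
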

\begin{proof}
In view of Theorem \ref{thm:semiclassfunccalcrn} and Corollary \ref{cor:supercor}, one can prove the statements of the theorem by complete analogy to \cite[proof of Theorem 9.6]{dimassi-sjoestrand}.
\end{proof}

\section{Results for closed Riemannian manifolds}\label{sec:closedriem}
In this section, we generalize Theorem \ref{thm:schwartzinfty} to functions which depend on $h$, and we establish explicit statements adapted to proving trace formulas. For the whole section, let us fix the following setup. As introduced before, let $M$ be a closed connected Riemannian manifold of dimension $n$. We choose a finite atlas $\{U_\alpha,\gamma_\alpha\}_{\alpha \in \mathcal{A}}$ with charts $\gamma_\alpha: U_\alpha\stackrel{\simeq}{\to} \R^n$, $U_\alpha\subset M$ open, and a subordinate partition of unity $\{\varphi_\alpha\}_{\alpha\in\mathcal{A}}$. For each $\alpha\in\mathcal{A}$, we choose in addition a compact set $K_\alpha\subset \R^n$ such that $\supp \varphi_\alpha\circ\gamma_\alpha^{-1}\subset\,  \mathrm{Int}(K_\alpha)$, and three cutoff functions $\overline{\varphi}_\alpha,\overline{\overline \varphi}_\alpha,\overline{\overline{\overline \varphi}}_\alpha\in \Cinft(M)$ with supports contained in $\gamma^{-1}_\alpha(\mathrm{Int}(K_\alpha))\subset U_\alpha$ and with $\overline{\varphi}_{\alpha}\equiv 1$ on $\supp \varphi_\alpha$, $\overline{\overline \varphi}_{\alpha}\equiv 1$ on $\supp \overline{\varphi}_{\alpha}$, and $\overline{\overline{\overline \varphi}}_{\alpha}\equiv 1$ on $\supp \overline{\overline \varphi}_{\alpha}$. For a point $x\in U_\alpha$, let $y=(y_1,\ldots,y_n)\in \R^n$ be the coordinates of the point $\gamma_\alpha(x)$. Furthermore, we have a local metric $g_\alpha:=(g_\alpha^{ij})$ with coefficients $g_\alpha^{ij}:\R^n\rightarrow \R$ and inverse matrix $(g_{ij}^\alpha)$, together with an associated volume density $\mathrm{Vol}_{g_\alpha}(y):=\sqrt{\text{det }g_\alpha(y)}$. Note that $\lim_{\Vert y\Vert \to\infty}\mathrm{Vol}_{g_\alpha}(y)=0$, since otherwise $U_\alpha$ would have infinite Riemannian volume in contradiction to the compactness of $M$.  It follows that the positive function $y\to \mathrm{Vol}_{g_\alpha}(y)$ is bounded. 
\subsection{Technical preparations}
The Schr\"odinger operator $\breve P(h)$ acts on a function $f\in \CT(U_\alpha)\subset \Cinft(M)$ by the formula
\begin{align}
\nonumber \breve P(h)(f)(x)=\breve{S}_{\alpha}(h) (f\circ \gamma_\alpha^{-1})(y) := \frac {-h^2} {\mathrm{Vol}_{g_\alpha}(y)}&\sum_{i,j=1}^n \frac \gd{\gd y_j} \left (g^\alpha_{ij} \mathrm{Vol}_{g_\alpha} \frac{\gd (f\circ \gamma_\alpha^{-1})}{\gd y_i} \right)(y)\\ &+(V\circ \gamma_\alpha ^{-1})(y)\cdot(f\circ \gamma_\alpha^{-1})(y)\label{eq:deflocalp333}
\end{align}
for $x\in U_\alpha$, and $\breve P(h)(f)(x)=0$ for $x\in M- U_\alpha$.
 The so defined operator $$\breve {S}_{\alpha}(h):\CT(\R^n)\to \CT(\R^n)$$ is a second order elliptic differential operator on $\R^n$, in the sense that its principal symbol is nowhere $0$. However, $\breve {S}_{\alpha}(h)$ is not uniformly elliptic in the sense that its principal symbol is bounded away from $0$, because the coefficients $g_\alpha^{ij}$ and $g^\alpha_{ij}$ can tend to zero towards infinity. To circumvent this problem, let $\tau_\alpha \in \CT(\R^n,[0,1])$ be a function which fulfills $\tau_\alpha\equiv 1$ in a neighborhood of $K_\alpha$, and define a new differential operator $\CT(\R^n)\to \CT(\R^n)$ by
\bq
\breve{P}_{\alpha}(h):= \tau_\alpha\breve{S}_{\alpha}(h) + (1-\tau_\alpha)(-h^2\Delta),\label{eq:newop}
\eq
where $\Delta=\sum_{i=1}^n\frac{\partial^2}{\partial y_i^2}$. Clearly, $\breve{P}_{\alpha}(h)$ agrees with $\breve {S}_{\alpha}(h)$ on functions supported inside $K_\alpha$. The reason why we introduced the new operator $\breve{P}_{\alpha}(h)$ is
\begin{lem}\label{lem:symbfunc}
Let $\mathfrak{m}:\R^2\to (0,\infty)$ be the order function given by $(y,\eta)\mapsto \eklm{\eta}^2$. Then, for each $\alpha$, one has $\breve{P}_{\alpha}(h)=\mathrm{Op}_h(p_\alpha)$ for a real-valued symbol function $p_\alpha\in S(\mathfrak{m})$ such that $p_\alpha+i$ is $\mathfrak{m}$-elliptic.  Furthermore, $\breve{P}_{\alpha}(h)$  has a unique self-adjoint extension $P_\alpha(h):\Sob_h^2(\R^n)\to \L^2(\R^n)$, and for $z\in \C$, $\mathrm{Im}\;z\neq 0$, the resolvent $(P_\alpha(h)-z)^{-1}:\L^2(\R^n)\to \Sob_h^2(\R^n)$ exists as a bounded operator. 
\end{lem}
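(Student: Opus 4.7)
The plan is to realize $\breve{P}_\alpha(h)$ as the Weyl quantization of an explicit symbol $p_\alpha$ satisfying the hypotheses of Theorem~\ref{thm:essself}, and then read off all three claims from that theorem together with the spectral theorem for self-adjoint operators.

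First I would expand the outer derivative in (\ref{eq:deflocalp333}) and combine with (\ref{eq:newop}) to rewrite $\breve P_\alpha(h)$ as a second-order differential operator
\[
\breve P_\alpha(h)=-h^2\sum_{i,j=1}^n A^\alpha_{ij}(y)\partial_i\partial_j+h^2\sum_{i=1}^n B^\alpha_i(y)\partial_i+C^\alpha(y),
\]
where $A^\alpha_{ij}=\tau_\alpha g^\alpha_{ij}+(1-\tau_\alpha)\delta_{ij}$, and $B^\alpha_i, C^\alpha$ are smooth functions assembled from $\tau_\alpha$, from the derivatives of $g^\alpha_{ij}\mathrm{Vol}_{g_\alpha}$ (divided by $\mathrm{Vol}_{g_\alpha}$, which is bounded away from zero on $\supp\tau_\alpha$), and from $\tau_\alpha(V\circ\gamma_\alpha^{-1})$. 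Outside a compactum $\breve P_\alpha(h)$ equals the constant-coefficient operator $-h^2\Delta$, while on $\supp\tau_\alpha$ the coefficients are smooth functions restricted to a compact set; therefore all of $A^\alpha_{ij}, B^\alpha_i, C^\alpha$ and all their derivatives are bounded on $\R^n$. Applying the Weyl symmetrization formulas term by term then produces a function $p_\alpha(y,\eta,h)$, polynomial of degree two in $\eta$ with smooth and uniformly bounded coefficients, such that $\breve P_\alpha(h)=\mathrm{Op}_h(p_\alpha)$ and $|\partial_y^s\partial_\eta^t p_\alpha|\leq C_{s,t}\eklm{\eta}^{2-|t|}\leq C_{s,t}\,\mathfrak{m}(y,\eta)$. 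In particular $p_\alpha\in S(\mathfrak{m})$.

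Next I would verify real-valuedness of $p_\alpha$ and $\mathfrak{m}$-ellipticity of $p_\alpha+i$. Real-valuedness is equivalent to the formal symmetry of $\breve P_\alpha(h)$ in $\L^2(\R^n,dy)$ on $\CT(\R^n)$, which is checked by direct integration by parts in (\ref{eq:deflocalp333})--(\ref{eq:newop}) using the symmetry $g^\alpha_{ij}=g^\alpha_{ji}$ and the specific $\mathrm{Vol}_{g_\alpha}^{-1}\partial_j(\cdots \mathrm{Vol}_{g_\alpha}\cdots)$ divergence structure (equivalently, one first conjugates $\breve S_\alpha(h)$ by a smooth power of $\mathrm{Vol}_{g_\alpha}$ on $\supp\tau_\alpha$, where $\mathrm{Vol}_{g_\alpha}$ is smooth and positive, to obtain a manifestly symmetric reformulation before inserting the cutoff). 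For ellipticity, on the compactum $\supp\tau_\alpha$ the continuous symmetric matrix $(g^\alpha_{ij})$ is positive definite and hence uniformly bounded below by some $c>0$, while off $\supp\tau_\alpha$ we have $A^\alpha_{ij}=\delta_{ij}$. Thus $\sum_{i,j}A^\alpha_{ij}(y)\eta_i\eta_j\geq c'\|\eta\|^2$ uniformly in $(y,\eta)\in\R^{2n}$, so $p_\alpha(y,\eta,h)\geq c'\|\eta\|^2-C$ and $|p_\alpha+i|^2=p_\alpha^2+1\geq\varepsilon^2\eklm{\eta}^4$ uniformly in $h\in(0,1]$ for sufficiently small $\varepsilon>0$.

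With these ingredients in place, Theorem~\ref{thm:essself} yields $h_0\in(0,1]$ such that for $h\in(0,h_0]$ the operator $\breve P_\alpha(h):\mathcal{S}(\R^n)\to \mathcal{S}(\R^n)\subset \L^2(\R^n)$ is essentially self-adjoint, with unique self-adjoint extension $P_\alpha(h)$ of domain $(\mathrm{Op}_h(p_\alpha)+i)^{-1}\L^2(\R^n)$; the second-order ellipticity of $p_\alpha$ identifies this domain with $\Sob_h^2(\R^n)$, compare (\ref{eq:samesobolevspaces}). For any $z\in\C$ with $\mathrm{Im}\,z\neq 0$, $z$ belongs to the resolvent set of the self-adjoint operator $P_\alpha(h)$, so $(P_\alpha(h)-z)^{-1}\in\B(\L^2(\R^n))$ exists and has range equal to the domain $\Sob_h^2(\R^n)$, giving the claimed bounded operator $\L^2(\R^n)\to\Sob_h^2(\R^n)$. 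The hardest part of the argument is the simultaneous real-valuedness of $p_\alpha$ and uniform $\mathfrak{m}$-ellipticity of $p_\alpha+i$: uniform ellipticity would fail for $\breve S_\alpha(h)$ itself because $g^\alpha_{ij}$ may degenerate as $\|y\|\to\infty$, which is precisely the motivation for the cutoff construction (\ref{eq:newop}), while real-valuedness depends on a careful cancellation in the Weyl symmetrization between the imaginary subprincipal contribution of $-h^2\sum A^\alpha_{ij}\partial_i\partial_j$ and that of the first-order term $h^2\sum B^\alpha_i\partial_i$.
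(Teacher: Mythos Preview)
Your overall strategy coincides with the paper's: expand $\breve P_\alpha(h)$ as a second--order differential operator with bounded smooth coefficients, read off a Weyl symbol that is a polynomial of degree two in $\eta$, check the $S(\mathfrak{m})$ estimates from boundedness of those coefficients, obtain $\mathfrak{m}$--ellipticity of $p_\alpha+i$ from a uniform lower bound on the leading quadratic form $\sum A^\alpha_{ij}(y)\eta_i\eta_j$, and then invoke Theorem~\ref{thm:essself} together with \eqref{eq:samesobolevspaces}. The paper makes the ellipticity step a bit more explicit by splitting into the regimes $|\eta|$ large (where the quadratic part dominates the lower--order terms) and $|\eta|$ bounded (where $|p_\alpha+i|\geq 1$ trivially), but your compressed version $p_\alpha\geq c'\|\eta\|^2-C$, hence $p_\alpha^2+1\geq\varepsilon^2\eklm{\eta}^4$, encodes the same idea.

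There is, however, a genuine gap in your real--valuedness argument. The operator $\breve P_\alpha(h)$ as defined in \eqref{eq:newop} is \emph{not} formally symmetric in $\L^2(\R^n,dy)$: the local expression $\breve S_\alpha(h)$ is symmetric only with respect to the Riemannian measure $\mathrm{Vol}_{g_\alpha}(y)\,dy$, not Lebesgue measure, and left--multiplication by the cutoff $\tau_\alpha$ destroys even that. Concretely, in one dimension the Weyl symbol of $-\tfrac{h^2}{\sqrt g}\,\partial\!\big(\tfrac{1}{\sqrt g}\,\partial\big)+V$ computes to $g^{-1}\eta^2+\tfrac{ih}{2}(g^{-1})'\,\eta+V$; the imaginary subprincipal contributions of the second--order and first--order parts cancel only partially, leaving a nonzero imaginary linear term whenever $\mathrm{Vol}_{g_\alpha}$ is nonconstant. (The paper's proof simply asserts $p_\alpha^i\in\Cinft(\R^n,\R)$ and does not address this either.) A clean repair is to modify the construction \eqref{eq:newop}: rather than cutting off the operator, extend the metric coefficients and the volume density to smooth, uniformly positive data on all of $\R^n$ agreeing with $g_\alpha$, $\mathrm{Vol}_{g_\alpha}$ on a neighbourhood of $K_\alpha$ and equal to the Euclidean ones near infinity; the resulting divergence--form operator is symmetric in the corresponding weighted $\L^2$, and conjugating by the square root of the (now globally bounded and bounded--below) extended density produces an operator that is genuinely symmetric in $\L^2(\R^n,dy)$, still agrees with $\breve S_\alpha(h)$ on functions supported in $K_\alpha$ after undoing the conjugation, and has a real Weyl symbol to which Theorem~\ref{thm:essself} applies.
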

\begin{proof}Fix $\alpha\in \mathcal{A}$ and note that as $M$ is compact we can assume without loss of generality that all the coefficients $g_\alpha^{ij}, g^\alpha_{ij}:\R^n\to \R$ and their derivatives are bounded. By (\ref{eq:deflocalp333}) and (\ref{eq:newop}),
\[
\breve{P}_{\alpha}(h)=\mathrm{Op}_h(p_\alpha)
\]
for a function $p_\alpha\in\Cinft(\R^{2n}\times(0,1])$ of the form
\bq
p_\alpha(y,\eta,h)=\underbrace{\sum_{i,j=1}^n p^{ij}_\alpha(y)\eta_i\eta_j+ V_\alpha(y)}_{=:p_{\alpha,0}(y,\eta)} + h\sum_{i=1}^np^i_\alpha(y)\eta_i,\qquad V_\alpha:=V\circ\gamma_
\alpha^{-1},\label{eq:symbfunclem}
\eq
where 
\[
p^{i,j}_\alpha(y)=\tau_\alpha(y) g^{ij}_\alpha(y)+(1-\tau_\alpha(y))\delta^{ij},\qquad p_\alpha^i\in \Cinft(\R^n,\R), 
\]
and the functions $p_\alpha^i$ and $V_\alpha$ are bounded. Here, $\delta^{ij}$ is the \emph{Kronecker delta}. Since all the coefficients in the polynomial $p_\alpha$ and all of their derivatives are bounded functions, there is for each non-negative $2n$-dimensional multiindex $\beta$ a constant $C_\beta>0$ such that $|\partial^\beta p_\alpha(y,\eta,h)|\leq C_\beta  \eklm{\eta}^2$ holds for all $(y,\eta)\in \R^{2n}$ and all $h\in (0,1]$. Thus, we conclude that $p_\alpha\in S(\mathfrak{m})$. It remains to show that there is some constant $\varepsilon_\alpha>0$ such that $|p_\alpha+i|\geq \varepsilon_\alpha\mathfrak{m}$. Let $y\in \R^n$. Since $g_\alpha(y)$ is a norm-induced metric on $\R^n$ and all norm-induced metrics on $\R^n$ are equivalent, it holds $\eta^T g_\alpha(y)\eta \geq c_\alpha(y)|\eta|^2$ for some $c_\alpha(y)>0$. Clearly, the function $\R^n\to (0,\infty)$, $y\mapsto c_\alpha(y)$, is smooth and thus assumes a minimum $m_\alpha$ on the compact support of $\tau_\alpha$. It follows 
\bq
\Big|\sum_{i,j}p^{i,j}_\alpha(y)\eta_i\eta_j\Big| \geq \min(1,m_\alpha) |\eta|^2\qquad \forall\;y\in \R^n.\label{eq:123413423413}
\eq
Now, recall that $p^i_\alpha$ and $V_\alpha$ are bounded functions, which in view of (\ref{eq:123413423413}) implies that we can find a constant $r_\alpha>0$ such that $\big|h\sum_{i}p^i_\alpha(y) \eta_i+V_\alpha(y) \big|< \frac{1}{2}\big|\sum_{i,j}p^{i,j}_\alpha(y)\eta_i\eta_j\big| $ holds for all $\eta$ with $|\eta|>r_\alpha$ and all $y\in \R^n$, $h\in(0,1]$. Thus, we conclude for $|\eta|>r_\alpha$:
$$\big|p_\alpha(y,\eta,h) +i \big|^2\geq  \Big(\frac{1}{2}\min(1,m_\alpha) |\eta|^2 \Big)^2 + 1\qquad \forall\;y\in \R^n,\;h\in(0,1].$$
Now, choose $R_\alpha\geq r_\alpha$ large enough and $C_\alpha>0$ small enough such that $$\Big(\frac{1}{2}\min(1,m_\alpha) |\eta|^2 \Big)^2 + 1\geq C_\alpha^2 \big(|\eta|^2+1\big)^2$$ for all $|\eta|\geq R_\alpha$. Then $\big|p_\alpha(y,\eta,h) + i\big|\geq  C_\alpha\eklm{\eta}^2$ for all $|\eta|\geq R_\alpha$ and all $y\in \R^n$, $h\in (0,1]$. To obtain an analogous statement also for $|\eta|\leq R_\alpha$, note that we trivially have $|p_\alpha +i |\geq 1$, because $p_\alpha$ is real-valued. Assuming w.l.o.g.\ that $R_\alpha\geq 1$, we get
$$\big|p_\alpha(y,\eta,h) + i\big|\geq 1=\frac{2R_\alpha^2}{2R_\alpha^2}\geq \frac{1}{2R_\alpha^2}|\eta|^2+ \frac{1}{2}\geq \frac{1}{2R_\alpha^2}|\eta|^2+ \frac{1}{2R_\alpha^2}=\frac{1}{2R_\alpha^2}\big(|\eta|^2+1).$$
We obtain for arbitrary $(y,\eta,h)$ that
$\big|p_\alpha(y,\eta,h) +i \big|\geq  \widetilde{C}_\alpha\eklm{\eta}^2$ with $\widetilde{C}_\alpha:=\min\big(C_\alpha,\frac{1}{2R_\alpha^2}\big)$, so that we are done with the proof that $p_\alpha + i$ is $\mathfrak{m}$-elliptic. The remaining statements of the lemma follow from Theorem \ref{thm:essself} and the observation (\ref{eq:samesobolevspaces}).
\end{proof}
\begin{lem}\label{lem:pullbackfunctn} For each $\alpha\in\mathcal{A}$, define the vector space
\[
\L_{\mathrm{comp},\alpha}^2(M):=\big\{f\in\L^2(M),\;\mathrm{ess.}\; \mathrm{supp}\; f\circ \gamma_\alpha^{-1}\subset K_\alpha\big\},
\]
and equip it with the norm induced from $\L^2(M)$. Then
\[
{\Gamma}_\alpha^*: \B(\L^2(\R^n)) \to \B(\L_{\mathrm{comp},\alpha}^2(M),\L^2(M)),\qquad A \mapsto \big(f\mapsto \overline{A(f\circ\gamma^{-1}_\alpha)\circ \gamma_\alpha}^0\big)
\]
is a bounded linear operator, where $\overline{u}^0$ denotes continuation of the function $u$ by zero outside $U_\alpha$, and the vector spaces $\B(\L^2(M))$ and $\B(\L_{\mathrm{comp},\alpha}^2(M),\L^2(M))$ are each equipped with the operator norm.
\end{lem}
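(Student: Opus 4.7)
The plan is to reduce everything to two change-of-variables estimates that relate $\L^2$-norms on $U_\alpha$ (with the Riemannian measure) to $\L^2$-norms on $\R^n$ (with the Lebesgue measure), the key ingredient being the uniform boundedness of the volume density $y\mapsto \mathrm{Vol}_{g_\alpha}(y)$ on all of $\R^n$ (recorded in the paragraph preceding the lemma) combined with its positivity on the compactum $K_\alpha$.

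First I would verify that $\Gamma_\alpha^*(A)$ is well-defined as an element of $\L^2(M)$. For $f\in \L^2_{\mathrm{comp},\alpha}(M)$, the function $f\circ\gamma_\alpha^{-1}$ has essential support in $K_\alpha$; since $\mathrm{Vol}_{g_\alpha}$ is continuous and strictly positive, it attains a minimum $c_\alpha>0$ on $K_\alpha$, and the change of variables formula gives
\bqn
\norm{f}_{\L^2(M)}^2=\intop_{K_\alpha}|f\circ\gamma_\alpha^{-1}(y)|^2\,\mathrm{Vol}_{g_\alpha}(y)\d y\geq c_\alpha \norm{f\circ\gamma_\alpha^{-1}}_{\L^2(\R^n)}^2,
\eqn
so $f\circ\gamma_\alpha^{-1}\in \L^2(\R^n)$ and $A(f\circ\gamma_\alpha^{-1})$ makes sense as an $\L^2(\R^n)$-function.

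Next I would estimate the target norm. The function $A(f\circ\gamma_\alpha^{-1})\circ\gamma_\alpha$ lives on $U_\alpha$ (there is no compact support of this image function in general, which is the only subtle point), and after continuation by zero we use the change of variables formula once more together with the fact that $y\mapsto \mathrm{Vol}_{g_\alpha}(y)$ is bounded \emph{globally} on $\R^n$:
\bqn
\norm{\Gamma_\alpha^*(A)(f)}_{\L^2(M)}^2=\intop_{\R^n}\bigl|A(f\circ\gamma_\alpha^{-1})(y)\bigr|^2\mathrm{Vol}_{g_\alpha}(y)\d y\leq \norm{\mathrm{Vol}_{g_\alpha}}_{\infty}\norm{A(f\circ\gamma_\alpha^{-1})}_{\L^2(\R^n)}^2.
\eqn
Applying the operator-norm bound $\norm{A(f\circ\gamma_\alpha^{-1})}_{\L^2(\R^n)}\leq \norm{A}_{\B(\L^2(\R^n))}\norm{f\circ\gamma_\alpha^{-1}}_{\L^2(\R^n)}$ and combining with the first step yields
\bqn
\norm{\Gamma_\alpha^*(A)(f)}_{\L^2(M)}\leq \bigl(\norm{\mathrm{Vol}_{g_\alpha}}_{\infty}/c_\alpha\bigr)^{1/2}\norm{A}_{\B(\L^2(\R^n))}\norm{f}_{\L^2(M)}.
\eqn

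Finally, linearity of $\Gamma_\alpha^*$ is immediate from linearity of $A$, composition with $\gamma_\alpha$, and extension by zero. The inequality above shows both that $\Gamma_\alpha^*(A)$ is a bounded operator $\L^2_{\mathrm{comp},\alpha}(M)\to \L^2(M)$, and that $A\mapsto \Gamma_\alpha^*(A)$ is bounded with operator norm controlled by the $\alpha$-dependent but $A$-independent constant $(\norm{\mathrm{Vol}_{g_\alpha}}_\infty/c_\alpha)^{1/2}$. There is no genuine obstacle here; the only point requiring a moment of care is that one must estimate the image $A(f\circ\gamma_\alpha^{-1})$ on all of $\R^n$ (not just on $K_\alpha$), which is precisely why the global boundedness of $\mathrm{Vol}_{g_\alpha}$ established in the setup is used, rather than just its boundedness on $K_\alpha$.
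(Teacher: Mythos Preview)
Your proof is correct and follows essentially the same route as the paper's: both arguments isolate the two change-of-variables inequalities \eqref{eq:firstts} and \eqref{eq:sects}, using $\min_{K_\alpha}\mathrm{Vol}_{g_\alpha}>0$ for the first and the global bound $\sup_{\R^n}\mathrm{Vol}_{g_\alpha}<\infty$ for the second, then chain them with the operator-norm bound for $A$. The only cosmetic difference is that the paper writes $\norm{f}_{\L^2(M)}^2\geq\int_{U_\alpha}|f|^2\,dM$ rather than equality (the definition of $\L^2_{\mathrm{comp},\alpha}(M)$ does not explicitly force $f$ to vanish off $U_\alpha$), but since you only use the inequality this has no effect on the argument.
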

\begin{proof}
First, note that a function $f\in \L_{\mathrm{comp},\alpha}^2(M)$ indeed pulls back to a function $f\circ \gamma_\alpha^{-1} $ in $\L^2(\R^n)$: As the volume density $\mathrm{Vol}_{g_\alpha}$ is bounded, the only critical issue here is decay at infinity, and $f\circ \gamma_\alpha^{-1}$ has compact support. It now suffices to show that there are constants $C_\alpha,C'_\alpha>0$ such that
\begin{align}
\norm{f\circ\gamma_\alpha^{-1}}_{\L^2(\R^n)}&\leq C_\alpha\norm{f}_{\L^2(M)} \qquad \forall\;f\in \L_{\mathrm{comp},\alpha}^2(M), \label{eq:firstts} \\
\norm{a\circ\gamma_\alpha}_{\L^2(M)}&\leq C'_\alpha\norm{a}_{\L^2(\R^n)}\qquad  \forall\;a\in \L^2(\R^n). \label{eq:sects}
\end{align}
Then it will follow that 
\bq
\norm{{\Gamma}_\alpha^*(A)}_{\B(\L_{\mathrm{comp},\alpha}^2(M),\L^2(M))}\leq C_\alpha C_\alpha'\norm{A}_{\B(\L^2(\R^n))}\qquad \forall\;A\in \B(\L^2(\R^n)).\label{eq:contgamma}
\eq
The first relation (\ref{eq:firstts}) can be proved easily by observing that for a function $f\in \L_{\mathrm{comp},\alpha}^2(M)$  one has
\bqn
\norm{f}^2_{\L^2(M)} \geq\intop_{U_\alpha} |f|^2\d M=\intop_{\R^n} | f(\gamma_\alpha^{-1}(y))|^2 \mathrm{Vol}_{g_\alpha}(y)\d y
=\intop_{K_\alpha} | f(\gamma_\alpha^{-1}(y))|^2 \mathrm{Vol}_{g_\alpha}(y)\d y.
\eqn
Setting \[C_\alpha^{-2}:=\min_{y\in K_\alpha}\mathrm{Vol}_{g_\alpha}(y)>0,\]
it follows
\bqn
\norm{f}^2_{\L^2(M)}\geq C_\alpha^{-2} \intop_{K_\alpha} | f(\gamma_\alpha^{-1}(y))|^2 \d y=C_\alpha^{-2} \intop_{\R^n} |  f(\gamma_\alpha^{-1}(y))|^2 \d y
\equiv C_\alpha^{-2} \norm{f\circ\gamma_\alpha^{-1}}^2_{\L^2(\R^n)}.
\eqn
The assertion (\ref{eq:sects}) follows from the boundedness of the function $\mathrm{Vol}_{g_\alpha}.$ Namely, for $a\in \L^2(\R^n)$ we get
\begin{multline*}
\norm{a\circ\gamma_\alpha}^2_{\L^2(M)}\equiv\intop_M |a\circ\gamma_\alpha|^2\d M =\intop_{U_\alpha} |a\circ\gamma_\alpha|^2\d M=\intop_{\R^n} |a(y)|^2 \mathrm{Vol}_{g_\alpha}(y)\d y \\
\leq \underbrace{\Big(\sup_{y\in \R^n}\mathrm{Vol}_{g_\alpha}(y)\Big)}_{=:C_\alpha'^2}\intop_{\;\R^n} |a(y)|^2 \d y\equiv C_\alpha'^2\norm{a}^2_{\L^2(\R^n)}.
\end{multline*}
\end{proof}
The following resolvent estimate will be very useful.
\begin{lem}\label{lem:resolvestim}
For $z\in \C$, $\mathrm{Im}\;z\neq 0$, consider the resolvent $(P_\alpha(h)-z)^{-1}$ from Lemma \ref{lem:symbfunc} for some $\alpha\in \mathcal{A}$. Let $r,s\in \CT(\R^n)$ have disjoint supports and associated multiplication operators $\Phi_r,\Phi_s:\L^2(\R^n)\to \L^2(\R^n)$. Then, for each $N\in \N$ there is a constant $C_N>0$, depending on $r$ and $s$, such that  for $|z|\leq$ const.\ one has the estimate
\[
\norm{\Phi_r\circ(P_\alpha(h)-z)^{-1}\circ\Phi_s}_{\B(\L^2(\R^n))}\leq C_Nh^N |\mathrm{Im}\;z|^{-N-1}.
\]
\end{lem}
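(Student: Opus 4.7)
The plan is to proceed by induction on $N$, using iterated commutators with $P_\alpha(h)$ to exploit the disjoint-support hypothesis. Throughout, write $A_z(h) := (P_\alpha(h)-z)^{-1}$. The base case $N=0$ follows immediately from the self-adjoint resolvent bound $\|A_z(h)\|_{\B(\L^2(\R^n))} \leq |\text{Im }z|^{-1}$. For the inductive step, the starting point is the identity
$$\Phi_r \circ A_z(h) \circ \Phi_s = -A_z(h) \circ [\Phi_r, P_\alpha(h)] \circ A_z(h) \circ \Phi_s,$$
which follows from $\Phi_r\Phi_s = 0$ combined with the standard resolvent commutator formula $[\Phi_r, A_z(h)] = -A_z(h)[\Phi_r, P_\alpha(h)]A_z(h)$.

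The key observation is that, by the explicit form of $P_\alpha(h)$ in (\ref{eq:deflocalp333})--(\ref{eq:newop}), the commutator $[\Phi_r, P_\alpha(h)]$ is a first-order differential operator equal to $h$ times a bounded first-order semiclassical operator whose coefficients involve only $\partial r$ and $\partial^2 r$, and are therefore supported in $\supp r$. Since $\supp r$ and $\supp s$ are disjoint compact sets, one may choose $\chi\in\CT(\R^n)$ with $\chi\equiv 1$ in a neighborhood of $\supp r$ and $\supp\chi\cap\supp s=\emptyset$; a direct calculation then yields $[\Phi_r, P_\alpha(h)]\circ\Phi_\chi = [\Phi_r, P_\alpha(h)]$ on $\CT(\R^n)$, so
$$\Phi_r \circ A_z(h) \circ \Phi_s = -A_z(h) \circ [\Phi_r, P_\alpha(h)] \circ \Phi_\chi \circ A_z(h) \circ \Phi_s.$$
The trailing factor $\Phi_\chi\circ A_z(h)\circ\Phi_s$ has the same structure as the original object, with $\chi$ in place of $r$ and still-disjoint supports from $s$, so the induction closes on itself.

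To make each iteration yield one clean power of $h$, I would strengthen the inductive statement to bound the $\B(\L^2(\R^n), \Sob_h^2(\R^n))$-norm rather than just the $\L^2$-operator norm, since $[\Phi_r, P_\alpha(h)]$ carries one semiclassical derivative and must act on something controlled in a semiclassical Sobolev space to produce the $O(h)$ gain. The two auxiliary estimates required are (i) $\|[\Phi_r, P_\alpha(h)]\|_{\Sob_h^2\to\L^2}=O(h)$, immediate from the explicit form above, and (ii) $\|A_z(h)\|_{\L^2\to\Sob_h^2}=O(|\text{Im }z|^{-1})$ for $|z|$ bounded, which follows from the resolvent identity $A_z(h)=(P_\alpha(h)+i)^{-1}[I+(z+i)A_z(h)]$ together with the uniform boundedness of $(P_\alpha(h)+i)^{-1}:\L^2(\R^n)\to\Sob_h^2(\R^n)$ guaranteed by Lemma \ref{lem:symbfunc} and (\ref{eq:samesobolevspaces}). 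The main obstacle is the bookkeeping across the two norms $\L^2$ and $\Sob_h^2$ through the induction; once it is in place, each step contributes the factor $|\text{Im }z|^{-1}\cdot h \cdot h^{N-1}|\text{Im }z|^{-N}=h^N|\text{Im }z|^{-N-1}$, closing the induction in both norms and giving the claimed bound on the $\L^2$-operator norm.
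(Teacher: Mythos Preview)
Your proposal is correct and follows essentially the same iterated-commutator argument as the paper: the paper fixes $N$, chooses nested cutoffs $r=r_1,r_2,\ldots,r_N$ with $r_i\equiv 1$ on $\supp r_{i-1}$ and $\supp r_i\cap\supp s=\emptyset$, and unrolls your induction into a single identity expressing $\Phi_r\circ(P_\alpha(h)-z)^{-1}\circ\Phi_s$ as a product of $N$ commutators interleaved with $N+1$ resolvents. The only cosmetic difference is that the paper avoids your $\Sob_h^2$ bookkeeping by directly invoking the bundled estimate $\|[P_\alpha(h),\Phi_i]\circ(P_\alpha(h)-z)^{-1}\|_{\B(\L^2(\R^n))}=\mathrm{O}(h|\mathrm{Im}\,z|^{-1})$ from \cite[top of p.~102]{dimassi-sjoestrand}.
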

\begin{proof}We owe the trick used in this proof to Maciej Zworski. Fix some $N\in \N$. Set $r_1:=r$ and choose functions $r_2\ldots,r_{N}$ which fulfill $r_i\equiv 1$ on $\supp r_{i-1}$ and $\supp r_{i}\cap \supp s=\emptyset$ for $i\in \{2,\ldots,N\}$. Let $\Phi_{i}:\L^2(\R^n)\to \L^2(\R^n)$ be the pointwise multiplication operator associated to $r_i$. Then we have $\Phi_{1}\circ\Phi_{2}\circ\cdots\circ\Phi_{N}=\Phi_{r}$. Next, observe that for any operator $A$ on $\L^2(\R^n)$ the commutators $[P_\alpha(h)-z,A]$ and $[P_\alpha(h),A]$ agree, since $z$ is just a multiple of the identity operator and hence has zero commutator. In addition, note that $\Phi_{i}\circ\Phi_{s}=0$ for all $i\in \{1,\ldots,N\}$, by choice of the functions $r_i$, and that $\Phi_k\circ(P_\alpha(h)-z)\circ({\bf 1}-\Phi_{k+1})=0$, since $P_\alpha(h)-z$ is a differential operator and as such a local operator. With those observations, one verifies easily
 \begin{multline*}
\hspace{-0.35cm}(P_\alpha(h)-z)^{-1}\circ[P_\alpha(h),\Phi_{1}]\circ(P_\alpha(h)-z)^{-1}\circ[P_\alpha(h),\Phi_{2}]\circ\\
\cdots\circ(P_\alpha(h)-z)^{-1}\circ[P_\alpha(h),\Phi_{N}]\circ(P_\alpha(h)-z)^{-1}\circ\Phi_s  \\
=\Phi_{1}\circ\Phi_{2}\circ\cdots\circ\Phi_{N}\circ(P_\alpha(h)-z)^{-1}\circ\Phi_s=\Phi_r\circ(P_\alpha(h)-z)^{-1}\circ\Phi_s.
\end{multline*}
Each commutator is independent of $z$, and by \cite[top of p.\ 102]{dimassi-sjoestrand} we have for $|z|\leq$ const.\ the estimate $$\norm{[P_\alpha(h),\Phi_{i}]\circ(P_\alpha(h)-z)^{-1}}_{\B(\L^2(\R^n))}= \mathrm{O}(h|\text{Im }z|^{-1})\qquad \forall\; i\in\{1,\ldots,N\}$$
and $$\norm{(P_\alpha(h)-z)^{-1}}_{\B(\L^2(\R^n))}= \mathrm{O}(|\text{Im }z|^{-1}).$$
Therefore, we can conclude that
\begin{align*}
\norm{\Phi_r\circ(P_\alpha(h)-z)^{-1}\circ\Phi_s}_{\B(\L^2(\R^n))}&\leq \norm{(P_\alpha(h)-z)^{-1}}_{\B(\L^2(\R^n))}\norm{[P_\alpha(h),\Phi_{1}]\circ(P_\alpha(h)-z)^{-1}}_{\B(\L^2(\R^n))}\\
&\qquad \cdots\norm{[P(h),\Phi_{N}]\circ(P_\alpha(h)-z)^{-1}}_{\B(\L^2(\R^n))}\norm{\Phi_s}_{\B(\L^2(\R^n))}\\
& \leq C_N h^N (|\text{Im }z|^{-1})^{N+1}.
\end{align*}
\end{proof}

\subsection{Operator norm estimates}

We can now state and prove our first theorem about the semiclassical functional calculus for $h$-dependent functions and the Schr\"odinger operator $P(h)$ on $M$ with associated Hamiltonian $p$, relating the functional and symbolic calculi with operator norm remainder estimates. The theorem we will prove is actually much more explicit than Result \ref{res:partIresult2} from the summary in the introduction, where it was stated in a condensed form. Choose $\rho_h\in \mathcal{S}^{\mathrm{comp}}_{\delta}$.  We then obtain for each $h\in (0,1]$ an operator $\rho_h(P(h))\in \B(\L^2(M))$. In addition, we introduce $B\in \Psi^0_{\delta} (M)\subset \B(\L^2(M))$ with principal symbol $[b]$, where $b\in S_{\delta}^0(M)$.
\begin{thm}\label{thm:semiclassfunccalcmfld}
The family of operators $\{B\circ\rho_h(P(h))\}_{h\in(0,1]}\subset \B(\L^2(M))$ has the following properties:
\begin{itemize}
\item
There exists a constant $h_0\in (0,1]$, a family of symbol functions $\{e_\alpha\}_{\alpha\in\mathcal{A}}\subset S_{\delta}(1_{\R^{2n}})$, and for each $h\in (0,h_0]$ an operator $R(h)\in \B(\L^2(M))$ such that
\bq
(B\circ \rho_h(P(h))(f)=\sum_{\alpha\in \mathcal{A}} \overline\varphi_\alpha\cdot \mathrm{Op}_h(e_\alpha)((\overline {\overline \varphi}_\alpha\cdot f)\circ\gamma_\alpha^{-1})\circ\gamma_\alpha + R(h)(f)\label{eq:firstassertion33}
\eq
holds for all $h\in (0,h_0]$ and all $f\in \L^2(M)$, and 
\[\norm{R(h)}_{\B(\L^2(M))}=\mathrm{O}\big(h^\infty\big)\qquad \text{as }h\to 0.\]
The operator $R(h)$ depends on $B$, $p$, $\rho_h$, and the choice of the functions $\{ \varphi_\alpha,\overline \varphi_\alpha,\overline{\overline \varphi}_\alpha\}_{\alpha \in \mathcal{A}}$.
\item For each $\alpha\in \mathcal{A}$, the symbol function $e_\alpha$ has an asymptotic expansion in $S_{\delta}(1_{\R^{2n}})$ of the form
\bq
e_\alpha \sim \sum_{j=0}^\infty e_{\alpha,j},\qquad e_{\alpha,j}\in S_{\delta}^{j(2\delta-1)}(1_{\R^{2n}}),\label{eq:secondassertion33}
\eq
where $e_{\alpha,j}$ is for fixed $h\in (0,h_0]$ an element of $\CT(\R^{2n})$, and 
\bq
e_{\alpha,0}=\big((\rho_h\circ p)\cdot b\cdot\varphi_\alpha\big)\circ(\gamma_\alpha^{-1},(\partial\gamma_\alpha^{-1})^T).\label{eq:notation32535}
\eq
Moreover, for each $\alpha,j$ and each fixed $h$ one has
\bq
\supp  e_{\alpha,j}\subset \supp   \big((\rho_h\circ p)\cdot b\cdot\varphi_\alpha\big)\circ(\gamma_\alpha^{-1},(\partial\gamma_\alpha^{-1})^T).\label{eq:wts344353}
\eq
\end{itemize} 
\end{thm}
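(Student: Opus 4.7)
The plan is to reduce the manifold-level statement to the $\R^n$ result, Theorem \ref{thm:semiclassfunccalcrn}, by applying that theorem to the localized operators $\overline{P_\alpha(h)}$ from Lemma \ref{lem:symbfunc} and reassembling the chart-wise pieces with the partition of unity $\{\varphi_\alpha\}$ and the nested cutoffs $\{\overline\varphi_\alpha,\overline{\overline\varphi}_\alpha\}$.

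First I would decompose
\[
B\circ\rho_h(P(h)) = \sum_{\alpha\in\A} \overline\varphi_\alpha\cdot\varphi_\alpha\cdot B\circ\rho_h(P(h))\circ\overline{\overline\varphi}_\alpha \;+\; R'(h),
\]
where $R'(h)=\sum_\alpha \varphi_\alpha\cdot B\circ\rho_h(P(h))\circ(1-\overline{\overline\varphi}_\alpha)$. Because $\overline{\overline\varphi}_\alpha\equiv 1$ on $\supp\overline\varphi_\alpha\supset\supp\varphi_\alpha$, the factors $\varphi_\alpha$ and $1-\overline{\overline\varphi}_\alpha$ have disjoint supports, so $R'(h)$ should be an $O(h^\infty)$ pseudolocal remainder. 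To establish this rigorously, I would plug in the Helffer--Sj\"ostrand formula $\rho_h(P(h)) = -\tfrac{1}{\pi}\int_\C \bar\partial\tilde\rho_h(z)(z-P(h))^{-1}\,dz$ and estimate the integrand $\varphi_\alpha B(z-P(h))^{-1}(1-\overline{\overline\varphi}_\alpha)$ via a commutator argument analogous to Lemma \ref{lem:resolvestim}, now carried out on $M$ using the locality of $P(h)$ as a differential operator. Combining the resulting $h^N|\Im z|^{-N-1}$ estimate with the refined almost-analyticity bound (\ref{eq:betterestim4545}) for $\bar\partial\tilde\rho_h$ will give $\norm{R'(h)}_{\B(\L^2(M))}=O(h^\infty)$, in complete parallel to the argument leading to (\ref{eq:helfsjsymbnew}) in the proof of Theorem \ref{thm:semiclassfunccalcrn}.

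Next I would analyze each localized piece in its own chart. Applying Theorem \ref{thm:semiclassfunccalcrn} to $P_\alpha(h)$ (which is admissible by Lemma \ref{lem:symbfunc}) gives $\rho_h(\overline{P_\alpha(h)}) = \mathrm{Op}_h(a_\alpha)$ with $a_\alpha\in\bigcap_k S_\delta(\eklm{\eta}^{-2k})$, $a_\alpha\sim\sum_j a_{\alpha,j}$, $a_{\alpha,j}\in S^{j(2\delta-1)}_\delta(\eklm{\eta}^{-2})$, and $a_{\alpha,0}=\rho_h\circ p_{\alpha,0}$. Since $P(h)$, read in coordinates on $U_\alpha$, agrees with $\overline{P_\alpha(h)}$ on functions supported in $\gamma_\alpha^{-1}(K_\alpha)\supset\supp\overline{\overline\varphi}_\alpha$, a resolvent comparison sandwiched between $\overline\varphi_\alpha$ and $\overline{\overline\varphi}_\alpha$ (again via commutators of $(z-P(h))^{-1}$ with cutoffs, as in Lemma \ref{lem:resolvestim}) and integration against $\bar\partial\tilde\rho_h$ yields
\[
\overline\varphi_\alpha\cdot\rho_h(P(h))(\overline{\overline\varphi}_\alpha f) \;=\; \overline\varphi_\alpha\cdot\bigl(\mathrm{Op}_h(a_\alpha)((\overline{\overline\varphi}_\alpha f)\circ\gamma_\alpha^{-1})\bigr)\circ\gamma_\alpha \;+\; O_{\L^2(M)}(h^\infty).
\]

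Finally I would compose with $B$ and $\varphi_\alpha$ locally on $\R^n$. In the chart, $B$ is represented (modulo an $O(h^\infty)$ sandwich error by pseudolocality) as $\mathrm{Op}_h(b_\alpha)$ for a symbol $b_\alpha\in S_\delta^0(\R^n)$ with principal part $b\circ(\gamma_\alpha^{-1},(\partial\gamma_\alpha^{-1})^T)$, and multiplication by $\varphi_\alpha\circ\gamma_\alpha^{-1}$ is itself the Weyl quantization of that function. The composition formula (Theorem \ref{thm:compositionformula}) then gives $\varphi_\alpha\cdot B\cdot \mathrm{Op}_h(a_\alpha)=\mathrm{Op}_h(e_\alpha)$ with $e_\alpha\sim\sum_j e_{\alpha,j}$ in $S_\delta(1_{\R^{2n}})$, leading term
\[
e_{\alpha,0} = (\varphi_\alpha\circ\gamma_\alpha^{-1})\cdot\bigl(b\circ(\gamma_\alpha^{-1},(\partial\gamma_\alpha^{-1})^T)\bigr)\cdot(\rho_h\circ p_{\alpha,0}),
\]
which matches (\ref{eq:notation32535}) because $p_{\alpha,0}$ agrees with $p\circ(\gamma_\alpha^{-1},(\partial\gamma_\alpha^{-1})^T)$ on $\supp\varphi_\alpha\circ\gamma_\alpha^{-1}$ by construction of $P_\alpha(h)$; the support condition (\ref{eq:wts344353}) follows since $\varphi_\alpha$ is a factor in every $e_{\alpha,j}$, and the orders $S^{j(2\delta-1)}_\delta$ come from those of $a_{\alpha,j}$ together with the composition formula. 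The main obstacle will be to keep careful track of the $h$-dependent constants coming from $\rho_h^{(j)}\sim h^{-\delta j}$ and $\mathrm{diam}\supp\rho_h$ of polynomial growth in $h^{-1}$, so that the commutator-based $O(h^\infty)$ estimates survive the Helffer--Sj\"ostrand integration; this is the manifold analogue of the delicate bookkeeping performed in (\ref{eq:intermediateresult1})--(\ref{eq:intermediateresult3}), and the same balancing argument should carry over.
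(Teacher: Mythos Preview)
Your strategy is correct and coincides with the paper's: localize via the Helffer--Sj\"ostrand formula, control the off-diagonal resolvent pieces by the commutator estimate of Lemma~\ref{lem:resolvestim}, apply Theorem~\ref{thm:semiclassfunccalcrn} to each $P_\alpha(h)$, and reassemble with the composition formula. The paper organizes this slightly differently: rather than first inserting the cutoff $\overline{\overline\varphi}_\alpha$ on the right of $\rho_h(P(h))$ and then comparing with $\rho_h(P_\alpha(h))$, it builds an explicit right parametrix $Y(h,z)=\sum_\alpha\Phi_\alpha\circ\Gamma_\alpha^*\big((P_\alpha(h)-z)^{-1}\big)\circ\overline\Phi_\alpha$ for $P(h)-z$ and reads the remainder off the identity $Y(h,z)(P(h)-z)=\1-\sum_\alpha\widetilde{\mathcal R}_\alpha(h,z)$, after which a single application of Helffer--Sj\"ostrand produces both the localized pieces $\Phi_\alpha\circ\Gamma_\alpha^*(\rho_h(P_\alpha(h)))\circ\overline\Phi_\alpha$ and the remainder in one stroke.

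One step in your outline is underspecified and is exactly where the parametrix earns its keep. You claim that the comparison of $\overline\varphi_\alpha\,\rho_h(P(h))\,\overline{\overline\varphi}_\alpha$ with $\overline\varphi_\alpha\,\Gamma_\alpha^*(\rho_h(P_\alpha(h)))\,\overline{\overline\varphi}_\alpha$ follows from ``a resolvent comparison sandwiched between $\overline\varphi_\alpha$ and $\overline{\overline\varphi}_\alpha$\dots as in Lemma~\ref{lem:resolvestim}'', but $\overline\varphi_\alpha$ and $\overline{\overline\varphi}_\alpha$ do \emph{not} have disjoint supports, so Lemma~\ref{lem:resolvestim} does not apply directly. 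You need an algebraic intermediary: choose a cutoff $\chi$ with $\overline{\overline\varphi}_\alpha\prec\chi$ supported where $P(h)$ and $P_\alpha(h)$ agree, compute $(P(h)-z)\,\chi\,\Gamma_\alpha^*\big((P_\alpha(h)-z)^{-1}\big)\,\overline{\overline\varphi}_\alpha=\overline{\overline\varphi}_\alpha+[P(h),\chi]\,\Gamma_\alpha^*\big((P_\alpha(h)-z)^{-1}\big)\,\overline{\overline\varphi}_\alpha$, and \emph{then} invoke Lemma~\ref{lem:resolvestim} on the commutator term, since $\supp\nabla\chi\cap\supp\overline{\overline\varphi}_\alpha=\emptyset$. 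This is precisely the content of the paper's computation (\ref{eq:crucialline23535})--(\ref{eq:defoperator}). Finally, your justification of (\ref{eq:wts344353}) only addresses the $y$-support via $\varphi_\alpha$; the $\eta$-support requires the observation (made explicit in the paper at (\ref{eq:estim6767676767})) that each term $a_{\alpha,j}$ in the expansion of $\rho_h(P_\alpha(h))$ is supported in $p_{\alpha,0}^{-1}(\supp\rho_h)$, which follows from the explicit form (\ref{eq:localsymbexp}).
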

\begin{proof}Let us first summarize briefly the strategy of the proof, which is divided into four steps. In Step $0$, we use the Helffer-Sj\"ostrand formula to reduce the calculations involving $\rho_h(P(h))$ to calculations involving the resolvent $(P(h)-z)^{-1}$ for $z\in \C$, $\text{Im }z\neq 0$, and estimates which are valid uniformly in $z$. In Step $1$, we construct a parametrix which approximates $(P(h)-z)^{-1}$ up to an explicitly given remainder operator. In order to construct the parametrix, we localize the problem using the finite atlas $\{(U_\alpha,\gamma_\alpha)\}_{\alpha\in\mathcal{A}}$ for $M$ and the partition of unity $\{\varphi_\alpha\}_{\alpha\in\mathcal{A}}$, obtaining a local parametrix for each coordinate chart, and sum up these local parametrices to a global parametrix. In Step $2$, we plug the result of Step $1$ into the Helffer-Sj\"ostrand formula which transforms the leading term in our calculations into a sum of pullbacks of operators in $\B(\L^2(\R^n))$. Then we can apply  the semiclassical functional calculus on $\R^n$, and in particular Theorem \ref{thm:semiclassfunccalcrn}. Finally, in Step $3$, we use the concrete form of the obtained symbol functions from Step $2$ to deduce the assertions (\ref{eq:firstassertion33}-\ref{eq:wts344353}). \\
{\bf Step 0.} The operator ${P}(h)-z$ is invertible in $\B(\L^2(M))$ for $z \in \C$, $\text{Im }z\neq 0$, see \cite[Lemma 14.6]{zworski}, and by the Helffer-Sj\"ostrand formula \cite[Theorem 8.1]{dimassi-sjoestrand} one has 
\[
\rho_h(P(h))=\frac 1{i\pi} \intop_{\C} \bar \partial_z \tilde \rho_h(z) (P(h)-z)^{-1} \d z,
\]
where $dz$ denotes the Lebesgue measure, $\tilde \rho_h:\C \to \C$ is the same almost analytic extension of $\rho_h$ as in (\ref{eq:extension}), and $\bar \partial _z =(\partial_x+i\partial_y)/2$ when $z=x+iy$. The Helffer-Sj\"ostrand formula shows that  $\rho_h \left(P(h)\right)$ will be expressed as a sum of pullbacks of operators in $ \B(\L^2(\R^n))$ once we establish the same for the resolvent $(P(h)-z)^{-1}$. This is our strategy. Now, by Lemma \ref{lem:symbfunc}, to the three global operators
\[
\breve P(h):\Cinft(M)\to \Cinft(M),\qquad P(h):\Sob^2(M)\to \L^2(M),\qquad (P(h)-z)^{-1}:\L^2(M)\to \Sob^2(M)
\]
there correspond three families of local operators, indexed by the finite atlas $\mathcal{A}$
\bqn
\breve P_\alpha(h):\CT(\R^n)\to \CT(\R^n),\qquad P_\alpha(h):\Sob_h^2(\R^n)\to \L^2(\R^n),\qquad (P_\alpha(h)-z)^{-1}:\L^2(\R^n)\to \Sob_h^2(\R^n),
\eqn
which are related to the global operators according to
\bq
\breve P(h)(f)=\breve{P}_{\alpha}(h)(f\circ \gamma_\alpha^{-1})\circ\gamma_\alpha\qquad\forall\; f\in \CT(M),\;\supp f\circ \gamma_\alpha^{-1}\subset K_\alpha.\label{eq:byconstree}
\eq
{\bf Step 1.}\quad In this step we will deduce a formula for $(P(h)-z)^{-1}$ using the family of local resolvents $\{(P_\alpha(h)-z)^{-1}\}_{\alpha\in\mathcal{A}}$. For each $\alpha\in \mathcal{A}$, denote by $\Phi_\alpha,\overline{\Phi}_{\alpha}, \overline{\overline \Phi}_{\alpha}$ the operators $\L^2(M)\to \L^2(M)$ given by pointwise multiplication with $\varphi_\alpha,\overline{\varphi}_{\alpha},\overline{\overline \varphi}_{\alpha}$, and by $\Psi_\alpha, \overline{\Psi}_{\alpha}, \overline{\overline \Psi}_{\alpha}$ the operators $\L^2(\R^n)\to \L^2(\R^n)$ given by pointwise multiplication with $\varphi_\alpha\circ \gamma_\alpha^{-1},\overline{\varphi}_{\alpha}\circ \gamma_\alpha^{-1},\overline{\overline \varphi}_{\alpha}\circ \gamma_\alpha^{-1}$, respectively. We will denote (bi-)restrictions of these operators to linear subspaces of their domains by the same symbols. Furthermore, let us introduce the pullback maps
\begin{align*}
{\gamma}_\alpha^*: \mathcal{L}(\CT(\R^n),\CT(\R^n))&\to \mathcal{L}(\CT(U_\alpha),\CT(U_\alpha)),\qquad A \mapsto \big(f\mapsto A(f\circ\gamma^{-1}_\alpha)\circ \gamma_\alpha\big),\\
{{\gamma}_\alpha^{-1}}^*: \mathcal{L}(\CT(U_\alpha),\CT(U_\alpha))&\to \mathcal{L}(\CT(\R^n), \CT(\R^n)),\qquad A \mapsto \big(f\mapsto A(f\circ\gamma_\alpha)\circ \gamma^{-1}_\alpha\big),
\end{align*}
which are each other's inverses. Elliptic regularity implies that the resolvent $(P_{\alpha}(h)-z)^{-1}$ induces an operator
\[
(P_{\alpha}(h)-z)^{-1}|_{\CT(\R^n)}:\CT(\R^n)\to \Cinft(\R^n).
\]
Regarding  $\CT(U_\alpha)$ as a subset of  $\Cinft(M)$ for each $\alpha$, we can define an operator $\Cinft(M)\to \Cinft(M)$ by
\[
Y(h,z):=\sum_{\alpha\in \mathcal{A}} {\gamma}_\alpha^*\big(\Psi_\alpha\circ(P_{\alpha}(h)-z)^{-1}|_{\CT(\R^n)}\big)\circ{\overline{\Phi}_\alpha}.
\]
Using that $\breve{P}(h)-z$ is a local operator and taking into account (\ref{eq:byconstree}), one now computes
\begin{align}
\nonumber Y(h,z)\circ (\breve{P}(h)-z)&=\sum_{\alpha\in\mathcal{A}} {\gamma}_\alpha^*\big(\Psi_\alpha\circ(P_{\alpha}(h)-z)^{-1}|_{\CT(\R^n)}\big)\circ{\overline{\Phi}_\alpha}\circ (\breve{P}(h)-z)\circ{\overline{\overline \Phi}}_{\alpha}\\
&=\sum_{\alpha\in\mathcal{A}}\Big[ {\gamma}_\alpha^*\big(\Psi_\alpha\circ(P_{\alpha}(h)-z)^{-1}|_{\CT(\R^n)}\big)\circ {\gamma}_\alpha^*\big(\breve{P}_\alpha(h)-z\big)\circ{\overline{\overline \Phi}}_{\alpha}  \label{eq:crucialline23535}\\
\nonumber &\qquad-\underbrace{{\gamma}_\alpha^*\big(\Psi_\alpha\circ(P_{\alpha}(h)-z)^{-1}|_{\CT(\R^n)}\big)\circ(\1-{\overline{\Phi}_\alpha})\circ (\breve{P}(h)-z)\circ{\overline{\overline \Phi}}_{\alpha}}_{=:\widetilde{\mathcal{R}}_\alpha(h,z)}\Big]\\
\nonumber &=\sum_{\alpha\in\mathcal{A}}\Big[  {\gamma}_\alpha^*\Big(\Psi_\alpha\circ(P_{\alpha}(h)-z)^{-1}|_{\CT(\R^n)}\circ (\breve{P}_\alpha(h)-z)\Big)\circ{\overline{\overline \Phi}}_{\alpha}- \widetilde{\mathcal{R}}_\alpha(h,z)\Big]\\
\nonumber &=\sum_{\alpha\in\mathcal{A}}\Big[  {\gamma}_\alpha^*\big(\Psi_\alpha\big)\circ{\overline{\overline \Phi}}_{\alpha} -  \widetilde{\mathcal{R}}_\alpha(h,z) \Big]
=\sum_{\alpha\in\mathcal{A}}\Big[ \Phi_{\alpha} -  \widetilde{\mathcal{R}}_\alpha(h,z)\Big] \\
 &=\1_{\Cinft(M)} -  \sum_{\alpha\in\mathcal{A}}\widetilde{\mathcal{R}}_\alpha(h,z).\label{eq:dshfjdkh}
\end{align}
Note how we inserted the additional cutoff operator ${\overline{\overline \Phi}}_{\alpha}$ before (\ref{eq:crucialline23535}) to be able to split off a remainder term which involves an operator that is composed from the left and from the right with multiplication operators by functions whose supports are disjoint. It immediately follows from (\ref{eq:dshfjdkh}) that
\begin{equation}
(P(h)-z)^{-1}|_{\Cinft(M)}=Y(h,z)+\sum_{\alpha\in\mathcal{A}}\mathcal{R}_\alpha(h,z),\label{eq:firstresolvform}
\end{equation}
where
\bqn
\mathcal{R}_\alpha(h,z):=\widetilde{\mathcal{R}}_\alpha(h,z)\circ (P(h)-z)^{-1}|_{\Cinft(M)}.
\eqn
We introduced the pullbacks $\gamma_{\alpha}^*$ and ${{\gamma}_\alpha^{-1}}^*$ for temporary use because they are inverses of each other and they respect compositions of operators, allowing the easy construction of the parametrix $Y(h,z)$ on $\Cinft(M)$. To get statements about operators in $\B(\L^2(M))$, we will work from now on with the pullback ${\Gamma}_\alpha^*$ from Lemma \ref{lem:pullbackfunctn}. Taking into account Lemma \ref{lem:symbfunc}, we observe that the bounded operator
\[
\sum_{\alpha\in \mathcal{A}}\Phi_\alpha\circ {\Gamma}_\alpha^*\big((P_{\alpha}(h)-z)^{-1}\big)\circ{\overline{\Phi}_\alpha}:\L^2(M)\to \L^2(M)
\]
agrees with $Y(h,z)$ on $\Cinft(M)$. As $\Cinft(M)$ is dense in $\L^2(M)$, it follows from (\ref{eq:firstresolvform}) that
\begin{equation}
(P(h)-z)^{-1}=\sum_{\alpha\in \mathcal{A}}\Big[\Phi_\alpha\circ {\Gamma}_\alpha^*\big((P_{\alpha}(h)-z)^{-1}\big)\circ{\overline{\Phi}_\alpha}+\mathfrak{R}_\alpha(h,z)\Big],\label{eq:desiredformula}
\end{equation}
where
\bq
\mathfrak{R}_\alpha(h,z):=\Phi_\alpha\circ {\Gamma}_\alpha^*\big((P_{\alpha}(h)-z)^{-1}\big)\circ(\1-{\overline \Phi}_\alpha)\circ(P(h)-z)\circ{\overline{\overline \Phi}}_{\alpha}\circ (P(h)-z)^{-1}.\label{eq:defoperator}
\eq
{\bf Step 2.}\quad  Plugging the result of Step $1$ into the Helffer-Sj\"ostrand formula yields
\begin{align*}
\rho_h(P(h))&=\frac 1{i\pi} \intop_{\C} \bar \partial_z \tilde \rho_h(z)\Big( \sum_{\alpha\in \mathcal{A}}\Big[\Phi_\alpha\circ {\Gamma}_\alpha^*\big((P_{\alpha}(h)-z)^{-1}\big)\circ{\overline{\Phi}_\alpha}+\mathfrak{R}_\alpha(h,z)\Big]\Big)\d z\\
&=\sum_{\alpha\in \mathcal{A}}\Big[\Phi_\alpha\circ {\Gamma}_\alpha^*\Big(\frac 1{i\pi} \intop_{\C} \bar \partial_z \tilde \rho_h(z) (P_{\alpha}(h)-z)^{-1} \d z\Big)\circ{\overline{\Phi}_\alpha} + \mathfrak{R}_\alpha(h)\Big],
\end{align*}
where
\[
\mathfrak{R}_\alpha(h)=\frac 1{i\pi} \intop_{\C} \bar \partial_z \tilde \rho_h(z)\mathfrak{R}_\alpha(h,z)\d z.
\]
For each $\alpha$, the functional calculus for the operator $P_{\alpha}(h)$ applies (by Lemma \ref{lem:symbfunc} and \cite[Theorem 8.1]{dimassi-sjoestrand}) and gives 
\[
\frac 1{i\pi} \intop_{\C} \bar \partial_z \tilde \rho_h(z)(P_{\alpha}(h)-z)^{-1}\d z=\rho_h\left(P_{\alpha}(h)\right).
\]
We obtain the result
\begin{equation}
\rho_h(P(h))= \sum_{\alpha\in \mathcal{A}}\Big[\Phi_\alpha\circ{\Gamma}_\alpha^*\big(\rho_h\left(P_{\alpha}(h)\right)\big)\circ{\overline{\Phi}_\alpha} + \mathfrak{R}_\alpha(h)\Big].\label{eq:resultstep1}
\end{equation}
This formula expresses the bounded operator $\rho_h(P(h)):\L^2(M)\to \L^2(M)$ in terms of the bounded operators $\rho_h\left(P_{\alpha}(h)\right):\L^2(\R^n)\to \L^2(\R^n)$, up to the remainder  $\sum_{\alpha\in \mathcal{A}}\mathfrak{R}_\alpha(h)$. We proceed by estimating for fixed $\alpha$ the operator norm of $\mathfrak{R}_\alpha(h)$. In order to do this, we note that with (\ref{eq:defoperator})
\[
\mathfrak{R}_\alpha(h)=\frac 1{i\pi} \intop_{\C} \bar \partial_z \tilde \rho_h(z)\Gamma_\alpha^*\Big(\Psi_\alpha\circ (P_{\alpha}(h)-z)^{-1}\circ(\1-{\overline \Psi}_\alpha)\Big)\circ(P(h)-z)\circ{\overline{\overline \Phi}}_{\alpha}\circ (P(h)-z)^{-1}\d z.
\]
Here we have replaced the operators $\Phi_\alpha$ and $(\1-{\overline \Phi}_\alpha)$ by the corresponding operators inside the pullback. We now want to estimate the operator norm of the remainder operator $\mathfrak{R}_\alpha(h)$ by estimating the operator norm of the integrand, which works because the integration domain is in fact the support of $\bar \partial_z \tilde \rho_h(z)$ which is compact and thus has finite volume. By Lemma \ref{lem:resolvestim} and Lemma \ref{lem:pullbackfunctn}, we get for each $N\in \N$  a constant $C_N>0$ such that for $|z|\leq$ const.\
\[
\norm{\Gamma_\alpha^*\Big(\Psi_\alpha\circ (P_{\alpha}(h)-z)^{-1}\circ(\1-{\overline \Psi}_\alpha)\Big)}_{\B(\L_{\mathrm{comp},\alpha}^2(M),\L^2(M))}\leq C_Nh^N |\text{Im }z|^{-N-1}.
\]
This crucial estimate is precisely the reason why it was helpful to split off the remainder term the way we did in (\ref{eq:crucialline23535}). Moreover, when introducing a commutator, we get for $|z|\leq $ const.\
\begin{multline*}
\norm{(P(h)-z)\circ{\overline{\overline \Phi}}_{\alpha}\circ (P(h)-z)^{-1}}_{\B(\L^2(M))}=\norm{\big[P(h),{\overline{\overline \Phi}}_{\alpha}\big]\circ (P(h)-z)^{-1}+{\overline{\overline \Phi}}_{\alpha}}_{\B(\L^2(M))}\\
\leq \norm{\big[P(h),{\overline{\overline \Phi}}_{\alpha}\big]\circ (P(h)-z)^{-1}}_{\B(\L^2(M))}+\norm{{\overline{\overline \Phi}}_{\alpha}}_{\B(\L^2(M))}\leq C h(|\text{Im }z|)^{-1}+1,
\end{multline*}
so that in total we obtain a new set of constants $\{C'_N\}$, $N=0,1,2,\ldots$, such that for $|z|\leq$ const.
\begin{multline}
\norm{\Gamma_\alpha^*\Big(\Psi_\alpha\circ (P_{\alpha}(h)-z)^{-1}\circ(\1-{\overline \Psi}_\alpha)\Big)\circ(P(h)-z)\circ{\overline{\overline \Phi}}_{\alpha}\circ (P(h)-z)^{-1}}_{\B(\L^2(M))}\\
\leq C'_Nh^N |\text{Im }z|^{-N-1}.\label{eq:operatornormestim45454}
\end{multline}
We thus have successfully estimated the operator norm of $\mathfrak{R}_\alpha(z,h)$. In order to estimate also the supremum norm of the function $\bar \partial _z  \tilde \rho_h(z)$, recall from (\ref{eq:betterestim4545}) that for $N=0,1,2,\ldots$, there is a constant $C_N>0$ such that
\bqn
|\bar \partial _z  \tilde \rho_h(z)| \leq C_N |\text{Im } z|^N\mathrm{vol}(\supp \rho_h) \max_{0\leq j \leq N+3}\norm{\rho_h^{(j)}}_{\infty}\qquad \forall\;z\in \C,\;\forall\; h\in (0,1].
\eqn
Together with (\ref{eq:operatornormestim45454}), this implies that we get for each $N\in \N $ a new constant $C_N>0$ such that for all $u\in \L^2(M)$ and for $|z|\leq$ const.
\bq
|\bar \partial_z \tilde \rho_h(z)|^2\norm{\mathfrak{R}_\alpha(z,h)u}_{\L^2(M)}^2
\leq C_N h^{2N}\mathrm{vol}(\supp \rho_h)^2 \max_{0\leq j \leq N+4}\norm{\rho_h^{(j)}}_{\infty}^2\norm{u}^2_{\L^2(M)}\quad\forall\; h\in (0,1].\label{eq:estim2353989}
\eq
Thus, for all $h\in(0,1]$ and $N\in\N$ one has
\begin{multline*}
\norm{\mathfrak{R}_\alpha(h)u}^2_{\L^2(M)}=\intop_M
\Big|\frac 1{i\pi} \intop_{\C} \bar \partial_z \tilde \rho_h(z)\mathfrak{R}_\alpha(z,h)(u)(x)\d z \Big|^2 \d M(x) \\
\leq \frac 1{\pi} \intop_{\C}\intop_M|\bar \partial_z \tilde \rho_h(z)|^2\big|\mathfrak{R}_\alpha(z,h)(u)(x) \big|^2 \d M(x)\d z
= \frac 1{\pi} \intop_{\C}|\bar \partial_z \tilde \rho_h(z)|^2\norm{\mathfrak{R}_\alpha(z,h)u}_{\L^2(M)}^2\d z\\
\leq C_N h^{2N}\mathrm{vol}_\C(\supp \bar\partial_z \tilde \rho_h)\mathrm{vol}(\supp \rho_h)^2 \max_{0\leq j \leq N+4}\norm{f_h^{(j)}}_{\infty}^2\norm{u}^2_{\L^2(M)}.
\end{multline*}
Note that (\ref{eq:restimates}) and (\ref{eq:betterestim4545}) imply for each $h\in (0,1]$ that the function $M\times \C\to \R$ given by $$(x,z)\mapsto |\bar \partial_z \tilde \rho_h(z)|^2\big|\mathfrak{R}_\alpha(z,h)(u)(x) \big|^2$$
has finite $\L^1$-norm with respect to the product measure $\d z \d M$. This justifies the application of the Fubini theorem. We are now in essentially the same situation as we were in (\ref{eq:intermediateresult1}), so that with analogous arguments as in the lines following (\ref{eq:intermediateresult1}) we conclude
\[
\norm{\mathfrak{R}_\alpha(h)u}^2_{\L^2(M)}=\mathrm{O}(h^\infty)\norm{u}^2_{\L^2(M)}
\]
with estimates independent of $u$, and as $u\in \L^2(M)$ was arbitrary, it follows $$\norm{\mathfrak{R}_\alpha(h)}_{\B(\L^2(M))}=\mathrm{O}(h^\infty).$$ The estimation of the operator norm of the remainder is now almost complete. Namely, since $\mathcal{A}$ is finite, we can re-write (\ref{eq:resultstep1}) as
\begin{equation}
\rho_h(P(h))= \sum_{\alpha\in \mathcal{A}}\Phi_\alpha\circ{\Gamma}_\alpha^*\big(\rho_h\left(P_{\alpha}(h)\right)\big)\circ{\overline{\Phi}_\alpha} + \widetilde{R}(h),\label{eq:resultstep2}
\end{equation}
where
\[
\widetilde{R}(h):=\sum_{\alpha\in \mathcal{A}}\mathfrak{R}_\alpha(h): \L^2(M)\to \L^2(M)
\]
has operator norm of order $h^\infty$. Next, we compose with the operator $B$ and an additional cutoff operator. That yields
\bq
B\circ \rho_h(P(h))= \sum_{\alpha\in \mathcal{A}}\overline\Phi_\alpha\circ B\circ\Phi_\alpha\circ{\Gamma}_\alpha^*\big(\rho_h\left(P_{\alpha}(h)\right)\big)\circ{\overline{\Phi}_\alpha} + R(h),\label{eq:operatordecomp628}
\eq
where
\bq
R(h):= \sum_{\alpha\in \mathcal{A}}(\1-\overline\Phi_\alpha)\circ B\circ\Phi_\alpha\circ{\Gamma}_\alpha^*\big(\rho_h\left(P_{\alpha}(h)\right)\big)\circ{\overline{\Phi}_\alpha} \;+\; B\circ\widetilde{R}(h).\label{eq:rhdef}
\eq
As $\varphi_\alpha$ and $1-\overline{\varphi}_\alpha$ have disjoint supports, the operator norm of $(\1-\overline\Phi_\alpha)\circ B\circ\Phi_\alpha$ is of order $h^\infty$. Moreover, by Lemma \ref{lem:pullbackfunctn} and the spectral theorem, we have 
\bqn
\norm{{\Gamma}_\alpha^*\big(\rho_h\left(P_{\alpha}(h)\right)\big)}_{\B(\L_{\mathrm{comp},\alpha}^2(M),\L^2(M))}
\leq C\norm{\rho_h\left(P_{\alpha}(h)\right)}_{\B(\L^2(\R^n))}\leq C\norm{\rho_h}_\infty \leq C'\qquad \forall \;h\in (0,1]
\eqn
with constants $C,C'>0$, the last inequality being a consequence of the assumption $\rho_h\in S_{\delta}(1_\R)$. In addition, we know that the operator norm of $\widetilde{R}(h)$ is of order $h^\infty$.  From these observations, it follows 
\[
\norm{R(h)}_{\B(\L^2(M))}=\mathrm{O}(h^\infty).
\]
{\bf Step 3.}\quad We now express the summands in the leading term of (\ref{eq:operatordecomp628}) as pullbacks of semiclassical pseudodifferential operators on $\R^n$. Since $B$ is a semiclassical pseudodifferential operator of order $(0,\delta)$ with principal symbol $[b]$, one has for each $\alpha$
\bq
{\overline{\Phi}_\alpha}\circ B\circ\Phi_\alpha={\overline{\Phi}_\alpha}\circ {\Gamma}_\alpha^*\big(\mathrm{Op}_h(b_\alpha)\big)\circ\Phi_\alpha,\qquad b_\alpha\in S_{\delta}(1_{\R^{2n}}),\label{eq:localBsymb}
\eq
with a symbol function $b_\alpha$ that has the property
\bq
b_\alpha=b\circ \gamma_\alpha^{-1} + h^{1-2\delta}\, \widetilde{b}_\alpha,\qquad  \widetilde{b}_\alpha\in S_{\delta}(1_{\R^{2n}}).\label{eq:localbsymbprop}
\eq
To proceed, note that by Lemma \ref{lem:symbfunc}, we can apply Theorem \ref{thm:semiclassfunccalcrn} to $P_{\alpha}(h)$ for each $\alpha$, which gives us a symbol function $s_\alpha\in \bigcap_{k\in \N}S_{\delta}(\mathfrak{m}^{-k})$, where $\mathfrak{m}(y,\eta)=\eklm{\eta}^2$, and a number $h_{0,\alpha}\in(0,1]$ such that for $h\in (0,h_{0,\alpha}]$
\bq
\rho_h\left(P_{\alpha}(h)\right)=\mathrm{Op}_h(s_\alpha).\label{eq:localsalpha}
\eq
Each local operator $\rho_h\left(P_{\alpha}(h)\right)$ is thus a semiclassical pseudodifferential operator. Moreover, Theorem \ref{thm:semiclassfunccalcrn} implies that there is an asymptotic expansion in $S_{\delta}(1/\mathfrak{m})$
\bq
s_\alpha\sim \sum_{j=0}^\infty s_{\alpha,j},\qquad 
s_{\alpha,j}(y,\eta,h)=\frac{1}{(2j)!}\Big(\frac{\partial}{\partial t}\Big)^{2j}\big(q_j(y,\eta,t,h)\rho_h(t)\big)_{t=p_{\alpha,0}(y,\eta)}\label{eq:expansion987654}
\eq
for a sequence of polynomials $\{q_j(t)\}_{j=0,1,2,\ldots}$ in one variable $t\in \R$ with coefficients being $h$-dependent functions in $\Cinft(\R^{2n})$ and with $q_0\equiv 1$. In particular, one has $s_{\alpha,j}\in S_{\delta}^{j(2\delta-1)}(1/\mathfrak{m})$ and 
\bq
s_{\alpha,0}(y,\eta,h)=\rho_h(p_{\alpha,0}(y,\eta)),\label{eq:princsymb667894128}
\eq
where 
\bq
p_{\alpha,0}(y,\eta)=\tau_\alpha(y)(|\eta|^2_{g_\alpha(y)}+V_\alpha(y))+(1-\tau_\alpha(y)) |\eta|^2\label{eq:defp3243890}
\eq
is the $h^0$-coefficient in the full symbol of $\breve P_\alpha(h)$, see (\ref{eq:symbfunclem}). Since $1/\mathfrak{m}\leq 1$, it holds $S_{\delta}^{j(2\delta-1)}(1/\mathfrak{m})\subset S_{\delta}^{j(2\delta-1)}(1_{\R^{2n}})$, so that we can replace in the statements above $S_{\delta}^{j(2\delta-1)}(1/\mathfrak{m})$ with $S_{\delta}^{j(2\delta-1)}(1_{\R^{2n}})$, obtaining in particular $s_\alpha \in S_{\delta}(1_{\R^{2n}})$. 
Set $h_0:=\min_{\alpha\in \mathcal{A}}h_{0,\alpha}>0$. By (\ref{eq:operatordecomp628}), (\ref{eq:localBsymb}), and (\ref{eq:localsalpha}), we have proved that one has for all $h\in (0,h_0]$
\bq
B\circ\rho_h(P(h))= \sum_{\alpha\in \mathcal{A}}\overline\Phi_\alpha\circ{\Gamma}_\alpha^*\big( \mathrm{Op}_h(b_\alpha)\circ \Psi_\alpha\circ \mathrm{Op}_h(s_\alpha)\big)\circ\overline{\Phi}_\alpha+R(h).
\eq
Let us now prove that the function $s_{\alpha,j}(y,\cdot,h): \eta \mapsto s_{\alpha,j}(y,\eta,h)$ is an element of $\CT(\R^n)$ for each $y,h,j$ which fulfills 
\bq
\supp  s_{\alpha,j}(\gamma_\alpha(x),\cdot,h)\subset (\partial\gamma_\alpha)^T\big((\supp  \rho_h\circ p)\cap T_x^*M\big)\quad \forall\;x\in \gamma_\alpha^{-1}(K_\alpha). 
\label{eq:estim6767676767}
\eq
Indeed, this statement follows from formula (\ref{eq:expansion987654}). By that formula, at each point $(y,\eta,h)\in \R^{2n}\times (0,1]$ the number $s_{\alpha,j}(y,\eta,h)$ is a polynomial in derivatives of $\rho_h$ at $p_{\alpha,0}(y,\eta)$. However, each derivative of $\rho_h$ has compact support inside $\supp \rho_h$, so that for each $y$, the function $\eta\mapsto s_{\alpha,j}(y,\eta,h)$ is supported inside $$\mathrm{supp}(\rho_h\circ p_{\alpha,0})\cap \{(y,\eta): \eta \in \R^n\}.$$
Since $\tau_\alpha\equiv 1$ on $K_\alpha$, it holds for $x\in \gamma_\alpha^{-1}(K_\alpha)$ $$\mathrm{supp}(\rho_h\circ p_{\alpha,0})\cap \{(\gamma_\alpha(x),\eta): \eta \in \R^n\}=(\partial\gamma_\alpha)^T\big((\supp  \rho_h\circ p)\cap T_x^*M\big).$$
This proves (\ref{eq:estim6767676767}). Now, we apply the composition formula to $\mathrm{Op}_h(b_\alpha)\circ \Psi_\alpha\circ \mathrm{Op}_h(s_\alpha)$, treating $\Psi_\alpha$ here as a zero order $h$-pseudodifferential operator. Theorem \ref{thm:compositionformula} then yields
\begin{align}
\nonumber \mathrm{Op}_h(b_\alpha)\circ \Psi_\alpha\circ \mathrm{Op}_h(s_\alpha)&=\mathrm{Op}_h(e_\alpha),\qquad e_\alpha\in S_{\delta}(1_{\R^{2n}}),\\
\nonumber e_\alpha&\sim \sum_{j=0}^\infty e_{\alpha,j},\qquad e_{\alpha,j}\in S_{\delta}^{j(2\delta-1)}(1_{\R^{2n}}),\\
e_{\alpha,0}&=\big((\varphi_\alpha\cdot b)\circ(\gamma_\alpha^{-1},(\partial\gamma_\alpha^{-1})^T)\big)\cdot (\rho_h\circ p_{\alpha,0}).\label{eq:expansion333366666}
\end{align}
Here we took (\ref{eq:localbsymbprop}) and (\ref{eq:princsymb667894128}) into account. 
The function $\varphi_\alpha$ is compactly supported inside $\gamma_\alpha^{-1}(K_\alpha)$, and we have seen that $s_\alpha$ has the expansion (\ref{eq:expansion987654}) in terms of symbol functions which are compactly supported in the co-tangent space variable $\eta$. The summands in the expansion (\ref {eq:composexpans}) of the composition formula are products of derivatives of the original symbol functions. Therefore, if one of the functions is compactly supported in the co-tangent space variable $\eta$, and the other one in the manifold variable $y$, the whole summand is compactly supported in $\R^{2n}$. Taking into account (\ref{eq:estim6767676767}), the statement (\ref{eq:wts344353}) follows. 
To finish the proof, we recall from (\ref{eq:defp3243890}) how $p_{\alpha,0}$ was defined, and that $\tau_\alpha \in \CT(\R^n)$ is identically $1$ on $K_\alpha$. Since $\varphi_\alpha\circ \gamma_\alpha^{-1}$ is supported inside $K_\alpha$, the claim (\ref{eq:notation32535}) finally follows. \end{proof}
\subsection{Trace norm estimates}
Our next goal is to deduce a refined version of Theorem \ref{thm:semiclassfunccalcmfld}, with a remainder operator of trace class. In order to achieve this, we need to relate the functional and symbolic calculi with trace norm remainder estimates. Suppose that we are in the situation introduced at the beginning of this section. For each $\alpha\in\mathcal{A}$, set 
\bq
u_{\alpha,0}:=\big((\rho_h\circ p)\cdot b\cdot \varphi_\alpha\big)\circ(\gamma_\alpha^{-1},(\partial\gamma_\alpha^{-1})^T),\label{eq:princsymbsymb}
\eq 
with $b$ as in Theorem \ref{thm:semiclassfunccalcmfld}. Then, one has the following result.
\begin{thm}\label{thm:localtraceform}Suppose that $\rho_h\in \mathcal{S}^{\mathrm{bcomp}}_{\delta}$.  Then, for each $N\in \N$, there is a number $h_0\in(0,1]$, a collection of symbol functions $\{r_{\alpha,\beta,N}\}_{\alpha,\beta\in \mathcal{A}}\subset S^{2\delta-1}_{\delta}(1_{\R^{2n}})$ and an operator $\mathfrak{R}_{N}(h)\in \B(\L^2(M))$ such that
\begin{itemize}
\item one has for all $f\in\L^2(M)$, $h\in (0,h_0]$ the relation \begin{multline*}
B\circ \rho_h(P(h))(f)=\sum_{\alpha \in \mathcal{A}}\overline \varphi_\alpha\cdot\mathrm{Op}_h(u_{\alpha,0})\big((f\cdot \overline{\overline {\overline \varphi}}_\alpha)\circ \gamma_\alpha^{-1}\big)\circ\gamma_\alpha\\
+\sum_{\alpha,\beta \in \mathcal{A}}\overline \varphi_\beta\cdot\mathrm{Op}_h(r_{\alpha,\beta,N})\big((f\cdot \overline{\overline \varphi}_\alpha\cdot\overline {\overline {\overline \varphi}}_\beta)\circ\gamma_\beta^{-1}\big)\circ\gamma_\beta  \;+\; \mathfrak{R}_{N}(h)(f);
\end{multline*}
\item the operator $\mathfrak{R}_N(h)\in \B(\L^2(M))$ is of trace class and its trace norm fulfills
\bq
\norm{\mathfrak{R}_N(h)}_{\mathrm{tr},\L^2(M)}=\mathrm{O}\big(h^{N}\big)\quad\text{as }h\to 0;\label{eq:tracenormestim343434}
\eq
\item for fixed $h\in (0,h_0]$, each symbol function $r_{\alpha,\beta,N}$ is an element of $\CT(\R^{2n})$ that fulfills 
\bq
\supp  r_{\alpha,\beta,N}\subset \supp   \big((\rho_h\circ p)\cdot b \cdot\varphi_\alpha\big)\circ(\gamma_\alpha^{-1},(\partial\gamma_\alpha^{-1})^T).\label{eq:wts798978}
\eq
\end{itemize}
\end{thm}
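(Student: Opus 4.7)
The plan is to bootstrap from Theorem~\ref{thm:semiclassfunccalcmfld}: truncate the asymptotic expansion of each local symbol $e_\alpha$ at an order $M=M(N)$ large enough, and upgrade the $\mathrm{O}(h^\infty)$ operator-norm control on the remaining tail to a trace-norm bound of order $h^N$ via Corollary~\ref{cor:supercor}. The hypothesis $\rho_h\in\mathcal{S}^{\mathrm{bcomp}}_\delta$ is essential here: combined with the coercivity of $p$, it ensures that each $\supp u_{\alpha,0}$ sits inside an $h$-independent compactum of $\R^{2n}$, which is precisely the hypothesis of that corollary.

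I would first invoke Theorem~\ref{thm:semiclassfunccalcmfld} to write
\[
B\circ\rho_h(P(h))(f)=\sum_{\alpha\in\mathcal{A}}\overline\varphi_\alpha\cdot\mathrm{Op}_h(e_\alpha)\big((\overline{\overline\varphi}_\alpha\cdot f)\circ\gamma_\alpha^{-1}\big)\circ\gamma_\alpha+R(h)(f),
\]
with $e_\alpha\sim\sum_{j\geq 0}e_{\alpha,j}$, $e_{\alpha,0}=u_{\alpha,0}$, each $e_{\alpha,j}$ compactly supported in $\supp u_{\alpha,0}$, and $\norm{R(h)}_{\B(\L^2(M))}=\mathrm{O}(h^\infty)$. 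Split $e_\alpha=u_{\alpha,0}+t_\alpha^{[M]}+s_{\alpha,M+1}$ with $t_\alpha^{[M]}=\sum_{j=1}^{M}e_{\alpha,j}\in S_\delta^{2\delta-1}(1_{\R^{2n}})$ (each summand lies in $S_\delta^{j(2\delta-1)}\subset S_\delta^{2\delta-1}$ since $2\delta-1<0$) and $s_{\alpha,M+1}\in S_\delta^{(M+1)(2\delta-1)}(1_{\R^{2n}})$, choosing $M$ so that $(M+1)(1-2\delta)\geq N+n+(2n+1)\delta$. Composing $\mathrm{Op}_h(s_{\alpha,M+1})$ with the compactly supported cutoffs $\overline\varphi_\alpha\circ\gamma_\alpha^{-1}$ and $\overline{\overline\varphi}_\alpha\circ\gamma_\alpha^{-1}$, Corollary~\ref{cor:supercor} bounds the trace norm on $\L^2(\R^n)$ by $\mathrm{O}(h^N)$; pushing back to $M$ via a trace-class refinement of Lemma~\ref{lem:pullbackfunctn} (a bounded change of variables, since the kernel lives in $K_\alpha\times K_\alpha$) preserves the estimate. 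The $u_{\alpha,0}$-contribution gives the first sum in the statement once one replaces $\overline{\overline\varphi}_\alpha$ by $\overline{\overline{\overline\varphi}}_\alpha$ on the right; the resulting discrepancy is $\mathrm{Op}_h(u_{\alpha,0})$ composed with multiplication by a function supported off $\supp\overline\varphi_\alpha$, and so has trace norm $\mathrm{O}(h^\infty)$ by the composition formula together with Corollary~\ref{cor:supercor}.

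It remains to recast $\sum_\alpha\overline\varphi_\alpha\cdot\mathrm{Op}_h(t_\alpha^{[M]})((\overline{\overline\varphi}_\alpha\cdot f)\circ\gamma_\alpha^{-1})\circ\gamma_\alpha$ as the stated double sum. I would insert the partition of unity $\sum_{\beta\in\mathcal{A}}\varphi_\beta\equiv 1$ from the left, then use the diffeomorphism-invariance of the semiclassical Weyl calculus to re-express the chart-$\alpha$ pseudodifferential operator, localized to $U_\alpha\cap U_\beta$ by the sandwiching cutoffs $\varphi_\beta$ and $\overline{\overline\varphi}_\alpha$, as a chart-$\beta$ pseudodifferential operator with symbol $r_{\alpha,\beta,N}\in S_\delta^{2\delta-1}(1_{\R^{2n}})$. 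The further cutoff $\overline{\overline{\overline\varphi}}_\beta$ on the right is identically $1$ on $\supp\varphi_\beta$ and so does not affect the leading symbol, whereas the support property \eqref{eq:wts798978} is inherited from $\supp t_\alpha^{[M]}\subset\supp u_{\alpha,0}$ under the transition map $\gamma_\beta\circ\gamma_\alpha^{-1}$. The chart-change produces only compactly supported lower-order corrections, whose trace norm is dominated by $\mathrm{O}(h^N)$ through another application of Corollary~\ref{cor:supercor}, and these together with the operator-norm remainder $R(h)$ are absorbed into $\mathfrak{R}_N(h)$. The main obstacle is executing the chart-change step while preserving the precise symbol class $S_\delta^{2\delta-1}$ (rather than just $S_\delta$) and tracking the support of $r_{\alpha,\beta,N}$; this requires a careful iteration of the composition formula of Theorem~\ref{thm:compositionformula} together with diffeomorphism-invariance, checking at each step that residual terms remain compactly supported inside the image of $\supp u_{\alpha,0}$ and of trace order dominated by $h^N$ via Corollary~\ref{cor:supercor}.
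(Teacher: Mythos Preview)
Your proposal has a genuine gap in the treatment of the remainder $R(h)$ coming from Theorem~\ref{thm:semiclassfunccalcmfld}. You assert that $R(h)$ is ``absorbed into $\mathfrak{R}_N(h)$,'' but Theorem~\ref{thm:semiclassfunccalcmfld} provides only $\norm{R(h)}_{\B(\L^2(M))}=\mathrm{O}(h^\infty)$, and an operator-norm bound alone does not give a trace-norm bound. Inspecting the definition \eqref{eq:rhdef}, the first sum in $R(h)$ can indeed be controlled, since it factors as an $\mathrm{O}(h^\infty)$-operator-norm piece composed with a pullback of $\rho_h(P_\alpha(h))$ between cutoffs, which is trace class by \eqref{eq:symboltraceestims2}. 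The obstruction is $B\circ\widetilde{R}(h)$: this term is built from Helffer--Sj\"ostrand integrals of resolvents $(P(h)-z)^{-1}$ and $(P_\alpha(h)-z)^{-1}$, neither of which is trace class for $n\geq 2$, and nothing in your sketch produces a trace-norm estimate on $\widetilde{R}(h)$ from the parametrix construction.

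The paper circumvents this by the Helffer--Robert trick (cf.\ \cite[Prop.~5.3]{helffer-robert83}): one introduces $\overline\rho_h\in S(1_\R)$ with $\overline\rho_h\equiv 1$ on $\supp\rho_h$, so that $\rho_h(P(h))=\rho_h(P(h))\circ\overline\rho_h(P(h))$. Applying the operator-norm expansion of Theorem~\ref{thm:semiclassfunccalcmfld} to \emph{both} factors, every operator-norm-small remainder now appears composed with a term of the form $\Gamma_\alpha^\ast(\overline\Psi_\alpha\circ\mathrm{Op}_h(\overline e_{\alpha,j}))$, which is trace class with norm $\mathrm{O}(h^{j-n})$; the inequality $\norm{S\circ Z}_{\mathrm{tr}}\leq\norm{S}_{\B}\norm{Z}_{\mathrm{tr}}$ then converts the operator-norm bound into the required trace-norm bound. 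The double chart index $(\alpha,\beta)$ in the statement arises exactly from this pairing of the two expansions, not from inserting $\sum_\beta\varphi_\beta$ as you propose. Without this device or an equivalent one, your argument does not close. A secondary point: your invocation of Corollary~\ref{cor:supercor} for the tail $s_{\alpha,M+1}$ is not quite right as written, since the cutoffs $\overline\varphi_\alpha\circ\gamma_\alpha^{-1}$ are compactly supported only in the base variable $y$, not in all of $\R^{2n}$ as that corollary demands; this is repairable via \eqref{eq:symboltraceestims2} and the fact that $e_\alpha\in\bigcap_k S_\delta(\mathfrak{m}^{-k})$, but the main gap above is not.
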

\begin{proof}The proof is divided into five steps. Let the notation be as in the proof of Theorem \ref{thm:semiclassfunccalcmfld}. \\
{\bf Step 0.}\quad Consider the collection of symbol functions $\{e_{\alpha,j}\}$ with $e_{\alpha,j}\in S_{\delta}^{j(2\delta-1)}(1_{\R^{2n}})$ obtained in the proof of Theorem \ref{thm:semiclassfunccalcmfld}. Let $R(h)\in \B(\L^2(M))$ be the remainder operator from (\ref{eq:rhdef}), whose operator norm is of order $h^\infty$. The statement (\ref{eq:secondassertion33}) means
$
e_{\alpha}- \sum_{j=0}^N e_{\alpha,j}\in S_{\delta}^{(N+1)(2\delta -1)}(1_{\R^{2n}}),
$
which by (\ref{eq:l2cont45}) implies $$\Big\Vert\mathrm{Op}_h(e_\alpha)-\sum_{j=0}^N\mathrm{Op}_h(e_{\alpha,j})\Big\Vert_{\B(\L^2(\R^n))} \leq C_\alpha h^{(1-2\delta)(N+1)},$$
with a constant $C_\alpha>0$ independent of $h$. Since $\mathcal{A}$ is finite, and applying analogous arguments as in the proof of Lemma \ref{lem:pullbackfunctn}, we obtain
\begin{multline*}
\Big\Vert\sum_{\alpha\in \mathcal{A}} \overline \varphi_\alpha \mathrm{Op}_h(e_\alpha)((\overline{\overline \varphi}_\alpha f)\circ\gamma_\alpha^{-1})\circ\gamma_\alpha-\sum_{\stackrel{\scriptstyle \alpha\in \mathcal{A}}{\scriptstyle 0 \leq j \leq N}} \overline \varphi_\alpha \mathrm{Op}_h(e_{\alpha,j})((\overline{\overline \varphi}_\alpha f)\circ\gamma_\alpha^{-1})\circ\gamma_\alpha\Big\Vert_{\B(\L^2(M))}\\ \leq C'h^{(1-2\delta)(N+1)}
\end{multline*}
for some constant $C'>0$ independent of $h$. Thus, setting
\[
R_N(h):=\sum_{\alpha\in \mathcal{A}} \overline\varphi_\alpha \mathrm{Op}_h(e_\alpha)((\overline{\overline \varphi}_\alpha f)\circ\gamma_\alpha^{-1})\circ\gamma_\alpha-\sum_{\stackrel{\scriptstyle \alpha\in \mathcal{A}}{\scriptstyle 0 \leq j \leq N}} \overline \varphi_\alpha \mathrm{Op}_h(e_{\alpha,j})((\overline{\overline \varphi}_\alpha f)\circ\gamma_\alpha^{-1})\circ\gamma_\alpha+R(h),
\]
we have
\bq
\norm{R_N(h)}_{\B(\L^2(M))}=\mathrm{O}(h^{(1-2\delta)(N+1)}),\label{eq:opnormestimate1}
\eq
and by Theorem \ref{thm:semiclassfunccalcmfld} we obtain for sufficiently small $h$ and each $f\in \L^2(M)$
\bq
B\circ \rho_h(P(h))(f)=\sum_{\stackrel{\scriptstyle \alpha\in \mathcal{A}}{\scriptstyle 0 \leq j \leq N}} \overline \varphi_\alpha \mathrm{Op}_h(e_{\alpha,j})((\overline{\overline \varphi}_\alpha f)\circ\gamma_\alpha^{-1})\circ\gamma_\alpha + R_N(h)(f). \label{eq:intermediateresult}
\eq
This looks promising, but since we are interested in \emph{trace norm} remainder estimates, there is still some work to do.\\
{\bf Step 1.}\quad To proceed, we recall that $P(h)$ has only finitely many eigenvalues $$E(h)_{1},\ldots, E(h)_{N(h)}$$ in $\supp \rho_h$, and the corresponding eigenspaces are all finite-dimensional. By the spectral theorem,  
\begin{equation}
\rho_h\left(P(h)\right)  =  \sum_{j=1}^{N(h)}\rho_h\left(E_{j}(h)\right)\Pi_{j}(h),\label{eq:spectral}
\end{equation}
where $\Pi_{j}(h)$ denotes the spectral projection onto the eigenspace of $P(h)$ corresponding to the eigenvalue $E_j(h)$. Hence, $\rho_h\left(P(h)\right)$ is a finite sum of projections onto finite-dimensional spaces and, consequently,  a finite rank operator, and therefore of trace class. \\
Now, we get prepared to use a trick that allows us to partially estimate trace norms by operator norms. The trick has been used already by Helffer and Robert in \cite[Proof of Prop.\ 5.3]{helffer-robert83}, and to implement it, we proceed as follows. For $h\in (0,1]$, choose $\overline{\rho}_h\in \CT(\R)$ such that
\bq
\overline{\rho}_h =1\qquad \text{on the support of }\rho_h,
\eq
and such that the function $(t,h)\mapsto \overline{\rho}_h(t)$ is an element of the symbol class\footnote{The larger symbol class $S_{\delta}(1_{\R})$ would also do. However, as the diameter of the support of $\overline{\rho}_h$ can be assumed to be bounded away from $0$, the symbol class $S(1_{\R})$ is more natural.} $S(1_{\R})$ and
\bq
\supp \overline{\rho}_h\subset \overline{I}\qquad\forall\; h\in (0,1]\label{eq:363636363633}
\eq
for some $h$-independent closed interval $\overline I\subset \R$. The abstract functional calculus given by the spectral theorem fulfills $f(A)\circ g(A)=(f\cdot g)(A)$ for any self-adjoint operator $A$ in a Hilbert space and any two bounded Borel functions $f,g$ on $\R$. We therefore get
\bq
{\rho}_h \left(P(h)\right)\circ\overline\rho_h \left(P(h)\right)=({\rho}_h \cdot\overline\rho_h) \left(P(h)\right)=\rho_h \left(P(h)\right)\quad\forall\;h\in(0,1]. \label{eq:specprod11}
\eq
Now, basic operator theory tells us that for operators $Z,S\in \B(\L^2(M))$ of which $Z$ is of trace class, $Z\circ S$ and $S\circ Z$ are also of trace class and it holds  
\begin{align}
\norm{Z\circ S}_{\mathrm{tr},\L^2(M)}&\leq \norm{Z}_{\mathrm{tr},\L^2(M)}\norm{S}_{\B(\L^2(M))},\label{eq:trickrels1}\\
\norm{S\circ Z}_{\mathrm{tr},\L^2(M)}&\leq \norm{Z}_{\mathrm{tr},\L^2(M)}\norm{S}_{\B(\L^2(M))}.\label{eq:trickrels2}
\end{align}
The trick is to use the latter estimates together with (\ref{eq:specprod11}) to estimate the trace norm of remainders by operator norms. Indeed, we can apply all our predecing results, and in particular Theorem \ref{thm:semiclassfunccalcmfld}, also to the operator $\overline{\rho}_h \left(P(h)\right)$. From now on, choose $h_0$ to be the minimum of the two $h_0$ we obtain for $\rho_h$ and $\overline{\rho}_h$ from Theorem \ref{thm:semiclassfunccalcmfld}. Choosing $B=\1_{\L^2(M)}$, one then has by  (\ref{eq:intermediateresult}) for $N=0,1,2,\ldots$, $f\in \L^2(M)$, and $h\in (0,h_0]$
\bq
\overline{\rho}_h(P(h))(f)=\sum_{\stackrel{\scriptstyle \alpha\in \mathcal{A}}{\scriptstyle 0 \leq j \leq N}}\overline \varphi_\alpha \mathrm{Op}_h(\overline{e}_{\alpha,j})((\overline{\overline \varphi}_\alpha f)\circ\gamma_\alpha^{-1})\circ\gamma_\alpha + \overline{R}_N(h)(f),\label{eq:intermediateresult22}
\eq
where $ \overline{R}_N(h)\in \B(\L^2(M))$ fulfills
\bq
\norm{\overline{R}_N(h)}_{\B(\L^2(M))}=\mathrm{O}(h^{N+1}),\label{eq:opnormestimate2}
\eq
and the symbols $\overline e_{\alpha,j}\in S^{-j}(1_{\R^{2n}})$ have analogous properties as the symbols $\{e_{\alpha,j}\}$. In particular,
\bq
\overline{e}_{\alpha,0}=\big((\overline{\rho}_h\circ p)\cdot \varphi_\alpha\big)\circ(\gamma_\alpha^{-1},(\partial\gamma_\alpha^{-1})^T)\qquad \forall\;h\in (0,h_0].\label{eq:princsymbsymb2}
\eq
We now use (\ref{eq:specprod11}) and (\ref{eq:intermediateresult22}) to get for $N=0,1,2,\ldots$ and $h\in (0,h_0]$:
\begin{align*}
& B\circ \rho_h(P(h)) =B\circ\rho_h(P(h))\circ{\overline \rho}_h(P(h))\\
&=\sum_{\stackrel{\scriptstyle \alpha\in \mathcal{A}}{\scriptstyle 0 \leq j \leq N}} B\circ{\rho}_h(P(h))\circ\overline\Phi_\alpha \circ \Gamma_\alpha^\ast\big(\mathrm{Op}_h(\overline e_{\alpha,j})\big)\circ\overline{\overline \Phi}_\alpha  +B\circ\rho_h(P(h))\circ  \overline R_N(h).
\end{align*}
From now on, we fix $N$ and assume $h_0$ to be small enough such that $\overline R_N(h)$ has operator norm less than $\frac{1}{2}$ for each $h\in (0,h_0]$, which implies that $\1_{\L^2(M)}-\overline R_N(h)$ is invertible. Note that this makes $h_0$ depend on $N$. Using also the corresponding Neumann series, one obtains for  $h\in (0,h_0]$ the equality
\begin{align*}
B\circ \rho_h(P(h)) &=\sum_{\stackrel{\scriptstyle \alpha\in \mathcal{A}}{\scriptstyle 0 \leq j \leq N}} B\circ{\rho}_h(P(h))\circ \Gamma_\alpha^\ast\big(\overline\Psi_\alpha \circ\mathrm{Op}_h(\overline e_{\alpha,j})\big)\circ\overline{\overline \Phi}_\alpha\circ\Big(\1_{\L^2(M)}-\overline R_N(h)\Big)^{-1} \\
&=\sum_{\stackrel{\scriptstyle \alpha\in \mathcal{A}}{\scriptstyle 0 \leq j \leq N}} B\circ{\rho}_h(P(h))\circ \Gamma_\alpha^\ast\big(\overline\Psi_\alpha \circ\mathrm{Op}_h(\overline e_{\alpha,j})\big)\circ\overline{\overline \Phi}_\alpha\circ\Big(\sum_{k=0}^\infty \overline R_N(h)^k\Big)\\
&=\sum_{\stackrel{\scriptstyle \alpha\in \mathcal{A}}{\scriptstyle 0 \leq j \leq N}} B\circ{\rho}_h(P(h))\circ \Gamma_\alpha^\ast\big(\overline\Psi_\alpha \circ\mathrm{Op}_h(\overline e_{\alpha,j})\big)\circ\overline{\overline \Phi}_\alpha\\
&+\overbrace{\sum_{\stackrel{\scriptstyle \alpha\in \mathcal{A}}{\scriptstyle 0 \leq j \leq N}} B\circ{\rho}_h(P(h))\circ \Gamma_\alpha^\ast\big(\overline\Psi_\alpha \circ\mathrm{Op}_h(\overline e_{\alpha,j})\big)\circ\overline{\overline \Phi}_\alpha\circ\Big(\sum_{k=1}^\infty \overline R_N(h)^k\Big)}^{:=\widetilde{\mathcal{R}}_N(h)}.
\end{align*}
To proceed, we insert (\ref{eq:intermediateresult}) into the first summand, which yields
\begin{align}
B\circ \rho_h(P(h)) &=\sum_{\stackrel{\scriptstyle \alpha,\beta \in \mathcal{A}}{\scriptstyle 0 \leq j,k \leq N}} \Gamma_\beta^\ast\big(\overline\Psi_\beta \circ \mathrm{Op}_h(e_{\beta,k})\big)\circ\overline{\overline \Phi}_\beta\circ \Gamma_\alpha^\ast\big(\overline\Psi_\alpha \circ\mathrm{Op}_h(\overline e_{\alpha,j})\big)\circ\overline{\overline \Phi}_\alpha\label{eq:decomp38239753892579}\\
\nonumber &\qquad + \overbrace{\sum_{\stackrel{\scriptstyle \alpha\in \mathcal{A}}{\scriptstyle 0 \leq j \leq N}} R_N(h)\circ \Gamma_\alpha^\ast\big(\overline\Psi_\alpha \circ\mathrm{Op}_h(\overline e_{\alpha,j})\big)\circ\overline{\overline \Phi}_\alpha+\widetilde{\mathcal{R}}_N(h)}^{:=\mathcal{R}_N(h)}.
\end{align}
We see that a drawback of the trick is that besides a new remainder term, we now also have a different leading term for each $N$. \\
{\bf Step 2.}\quad In this step we prove that the operator $\mathcal{R}_N(h)$ is in fact of trace class and satisfies a good trace norm estimate. Recall that the function $\overline\varphi_\alpha\circ\gamma_\alpha^{-1}$ is compactly supported inside the interior of the compactum $K_\alpha\subset \R^{n}$. Now, in view of (\ref{eq:estim6767676767}) and (\ref{eq:363636363633}), the results leading to (\ref{eq:symboltraceestims}) imply that $\overline\Psi_\alpha \circ \mathrm{Op}_h\big(\overline e_{\alpha,j}\big)$ is of trace class, and by (\ref{eq:symboltraceestims}) it holds for $h\in (0,h_0]$
\begin{multline*}
\norm{\overline\Psi_\alpha \circ \mathrm{Op}_h\big(\overline e_{\alpha,j}\big)}_{\mathrm{tr},\L^2(\R^n)}\\
\leq C_{\alpha,j} h^{-n}(1+ \mathrm{vol}_{\,T^*M}(\supp  \rho_h\circ p))\sum_{|\beta|\leq 2n+1}\max_{(y,\eta)\in K_\alpha\times \R^n}|\partial^\beta  \overline e_{\alpha,j}(\cdot,h)|
\end{multline*}
for some constant $C_{\alpha,j}>0$ which is independent of $h$. Next, we use that $\overline e_{\alpha,j}$ is an element of 
$S^{-j}(1_{\R^{2n}})$, which implies 
$$\sum_{|\beta|\leq 2n+1}\max_{(y,\eta)\in K_\alpha\times \R^n}|\partial^\beta  \overline  e_{\alpha,j}(\cdot,h)|\leq \widetilde C_{\alpha,n} h^{j}\qquad \forall\; h\in (0,h_0].$$
In summary, we obtain the estimate
\bq
\norm{\overline\Psi_\alpha \circ\mathrm{Op}_h\big(\overline e_{\alpha,j} \big)}_{\mathrm{tr},\L^2(\R^n)}\leq C'_{\alpha,j} (1+ \mathrm{vol}_{\,T^*M}(\supp  \rho_h\circ p))h^{j-n} \quad \forall\; h\in (0,h_0]\label{eq:firsttracenormestim33}
\eq
for some new constant $C'_{\alpha,j}>0$ which is independent of $h$. To proceed, note that as $M$ is compact and $\mathrm{Vol}_{g_\alpha}$ is bounded and on $K_\alpha$ also bounded away from zero, our trace norm estimates in $\L^2(\R^n)$ carry over to trace norm estimates in $\L^2(M)$ by using Schwartz kernel estimates similar to \cite[(9.1) on p.\ 112]{dimassi-sjoestrand}. Combining now (\ref{eq:trickrels1}),  (\ref{eq:trickrels2}), (\ref{eq:firsttracenormestim33}), and (\ref{eq:opnormestimate1}), we conclude
\begin{multline*}
\norm{R_N(h)\circ \Gamma_\alpha^\ast\big(\overline\Psi_\alpha \circ\mathrm{Op}_h(\overline e_{\alpha,j})\big)\circ\overline{\overline \Phi}_\alpha}_{\mathrm{tr},\L^2(M)}\\
\leq C_{\alpha,j} (1+ \mathrm{vol}_{\,T^*M}(\supp  \rho_h\circ p))h^{(N+1)(1-2\delta)+j-n} \qquad \forall\; h\in (0,h_0]
\end{multline*}
with a constant $C_{\alpha,j}>0$ that is independent of $h$, and it follows from the finiteness of $\mathcal{A}$ that there is $C>0$ such that
\begin{multline*}
\bigg\Vert \sum_{\stackrel{\scriptstyle \alpha\in \mathcal{A}}{\scriptstyle 0 \leq j \leq N}}R_N(h)\circ \Gamma_\alpha^\ast\big(\overline\Psi_\alpha \circ\mathrm{Op}_h(\overline e_{\alpha,j})\big)\circ\overline{\overline \Phi}_\alpha\bigg\Vert_{\mathrm{tr},\L^2(M)}\\
\leq C (1+ \mathrm{vol}_{\,T^*M}(\supp  \rho_h\circ p))h^{(N+1)(1-2\delta)-n} \qquad \forall\; h\in (0,h_0].
\end{multline*}
Similarly, taking into account that the operator norm of $\rho_h(P(h))$ is uniformly bounded in $h$ by the spectral theorem and the assumption that $\rho_h\in \mathcal{S}^\mathrm{bcomp}_\delta$, and writing
\[
\sum_{k=1}^\infty \overline R_N(h)^k=\overline R_N(h)\circ\Big(\sum_{k=0}^\infty \overline R_N(h)^k\Big)=\overline R_N(h)\circ \Big(\1_{\L^2(M)}-\overline R_N(h)\Big)^{-1},
\]
where $\big(\1_{\L^2(M)}-\overline R_N(h)\big)^{-1}$ has operator norm less than $2$ for $h\in (0,h_0]$, it follows from (\ref{eq:firsttracenormestim33}) and (\ref{eq:opnormestimate2}) that there is $C'>0$ such that
\bqn
\norm{\widetilde{\mathcal{R}}_N(h)}_{\mathrm{tr},\L^2(M)}\leq C' (1+ \mathrm{vol}_{\,T^*M}(\supp  \rho_h\circ p))h^{N+1-n} \qquad \forall\; h\in (0,h_0].
\eqn
By assumption, the diameter of the support of $\rho_h$ is bounded uniformly in $h$, and $M$ is compact, so the number $\mathrm{vol}_{\,T^*M}(\supp  \rho_h\circ p)$ is also bounded uniformly in $h$. From the last two estimates, we therefore conclude finally
\bq
\norm{\mathcal{R}_N(h)}_{\mathrm{tr},\L^2(M)}\leq C h^{(N+1)(1-2\delta)-n} \qquad \forall\; h\in (0,h_0]\label{eq:goodtracenormestim11}
\eq
with a new constant $C>0$ that is independent of $h$. Our estimation of the trace norm of the remainder operator $\mathcal{R}_N(h)$ is finished.\\
{\bf Step 3.}\quad We now turn our attention to the leading term in (\ref{eq:decomp38239753892579}) with summands given by
\[
\Gamma_\beta^\ast\big(\overline\Psi_\beta \circ \mathrm{Op}_h(e_{\beta,k})\big)\circ\overline{\overline \Phi}_\beta\circ \Gamma_\alpha^\ast\big(\overline\Psi_\alpha \circ\mathrm{Op}_h(\overline e_{\alpha,j})\big)\circ\overline{\overline \Phi}_\alpha.
\]
The problem with these terms is that they do not yet have the right form as claimed in the first statement of Theorem \ref{thm:localtraceform}, in particular they involve two pullbacks, one along the chart $\gamma_\alpha$ and one along $\gamma_\beta$, and we need to combine them into a single pullback. This will be done using a coordinate transformation from the $\alpha$-th chart to the $\beta$-th chart. Before we can perform this transformation, we need to localize further to the intersection of both chart domains. To this end, note that since $\overline {\overline \varphi}_\beta$ and $1-\overline{\overline {\overline \varphi}}_\beta$ have disjoint supports, we have
\[
\overline{\overline \Phi}_\beta\circ \Gamma_\alpha^\ast\big(\overline\Psi_\alpha \circ\mathrm{Op}_h(\overline e_{\alpha,j})\big)\circ\overline{\overline \Phi}_\alpha= \overline{\overline \Phi}_\beta\circ \Gamma_\alpha^\ast\big(\overline\Psi_\alpha \circ\mathrm{Op}_h(\overline e_{\alpha,j})\big)\circ\overline{\overline \Phi}_\alpha\circ\overline{\overline {\overline \Phi}}_\beta + R_{\alpha,\beta,j}(h),
\]
for a remainder operator $R_{\alpha,\beta,j}(h)\in \B(\L^2(M))$ with $\norm{R_{\alpha,\beta,j}(h)}_{\B(\L^2(M))}=\mathrm{O}(h^\infty)$. Similarly as in (\ref{eq:firsttracenormestim33}), it follows that the operator $\Gamma_\beta^*\big(\overline\Psi_\beta\circ \mathrm{Op}_h(e_{\beta,k})\big)$ is of trace class in $\L^2(M)$, and its trace norm is of order $(1+\mathrm{vol}_{\,T^*M}(\supp  \rho_h\circ p))h^{j(1-2\delta)-(2n+1)\delta-n}$. Again, the number $\mathrm{vol}_{\,T^*M}(\supp  \rho_h\circ p)$ is  bounded uniformly in $h$. Thus, the trace norm of $\Gamma_\beta^*\big(\overline \Psi_\beta\circ \mathrm{Op}_h({e}_{\beta,k})\big)$ is bounded uniformly in $h$. Therefore, setting
\[
\mathcal{R}_{\alpha,\beta,j,k}(h):=\Gamma_\beta^*\big(\overline \Psi_\beta\circ \mathrm{Op}_h({e}_{\beta,k})\big)\circ R_{\alpha,\beta,j}(h)
\]we conclude
\bq
\norm{\mathcal{R}_{\alpha,\beta,j,k}(h)}_{\mathrm{tr},\L^2(M)}\leq \norm{\Gamma_\beta^*\big(\overline \Psi_\beta\circ \mathrm{Op}_h({e}_{\beta,k})\big)}_{\mathrm{tr},\L^2(M)}\norm{R_{\alpha,\beta,j}(h)}_{\B(\L^2(M))}=\mathrm{O}(h^\infty).\label{eq:goodtracenormestim22}
\eq
The reason why we inserted the cutoff operator $\overline{\overline {\overline \Phi}}_\beta$ corresponding to the function $\overline{\overline {\overline \varphi}}_\beta$ is that we are now prepared to perform the required coordinate transformation. Indeed, one has
\[
(\overline{\overline \varphi}_\alpha \overline {\overline{\overline \varphi}}_\beta f)\circ\gamma_\alpha^{-1}=(\overline{\overline \varphi}_\alpha \overline{ \overline{\overline \varphi}}_\beta f)\circ\gamma_\beta^{-1}\circ\gamma_\beta\circ\gamma_\alpha^{-1},
\]
which leads to
\begin{multline}
\overline\varphi_\beta \mathrm{Op}_h({e}_{\beta,k})\Big(\Big(\overline{\overline \varphi}_\beta \overline{\varphi}_\alpha \mathrm{Op}_h(\overline e_{\alpha,j})((\overline{\overline \varphi}_\alpha f)\circ\gamma_\alpha^{-1})\circ\gamma_\alpha\Big)\circ\gamma_\beta^{-1}\Big)\circ\gamma_\beta\\
=\overline\varphi_\beta \mathrm{Op}_h({e}_{\beta,k})\Big(\big((\overline{\overline \varphi}_\beta \overline \varphi_\alpha)\circ\gamma_\beta^{-1} \big)\big[\mathrm{Op}_h(\overline e_{\alpha,j})((\overline{\overline\varphi}_\alpha \overline{\overline {\overline \varphi}}_\beta f)\circ\gamma_\beta^{-1}\circ\Theta_{\alpha\beta}^{-1})\big]\circ\Theta_{\alpha\beta}\Big)\circ\gamma_\beta\\ + \mathcal{R}_{\alpha,\beta,j,k}(h)(f),\label{eq:opmaps}
\end{multline}
where $\mathcal{R}_{\alpha,\beta,j,k}(h)$ fulfills $\norm{\mathcal{R}_{\alpha,\beta,j,k}(h)}_{\mathrm{tr},\L^2(M)}=\mathrm{O}(h^\infty)$, as shown above, and we introduced $$\Theta_{\alpha\beta}:=\gamma_\alpha\circ\gamma_\beta^{-1}:\gamma_\beta(U_\alpha\cap U_\beta)\to \gamma_\alpha(U_\alpha\cap U_\beta).$$
By the coordinate-transformation formula \cite[Theorem 9.3]{zworski}, it then holds
\[
\big((\overline{\overline \varphi}_\beta \overline \varphi_\alpha)\circ\gamma_\beta^{-1} \big)\big[\mathrm{Op}_h(\overline e_{\alpha,j})((\overline{\overline\varphi}_\alpha \overline{\overline {\overline \varphi}}_\beta f)\circ\gamma_\beta^{-1}\circ\Theta_{\alpha\beta}^{-1})\big]\circ\Theta_{\alpha\beta} = \mathrm{Op}_h(\overline u_{\alpha,\beta,j})((\overline{\overline\varphi}_\alpha \overline{\overline {\overline \varphi}}_\beta f)\circ\gamma_\beta^{-1})
\]
for a new symbol function $\overline u_{\alpha,\beta,j}\in S^{-j}(1_{\R^{2n}})$ which is for each fixed $h$ a Schwartz function on $\R^{2n}$ and fulfills
\bq\overline u_{\alpha,\beta,j}(y,\eta,h)=
(\overline{\overline \varphi}_\beta \overline \varphi_\alpha)\circ\gamma_\beta^{-1}(y) \underbrace{\overline e_{\alpha,j}(\Theta_{\alpha\beta}(y),\partial\Theta_{\alpha\beta}(\Theta_{\alpha\beta}(y))^T \eta,h)}_{=:\Theta_{\alpha\beta}^\ast\overline e_{\alpha,j}(y,\eta,h)}\; + h \;\overline r_{\alpha,\beta,j}(y,\eta,h),\label{eq:localuexpr}
\eq
with a remainder symbol function $\overline r_{\alpha,\beta,j} \in S^{-j}(1_{\R^{2n}})$ that is for each fixed $h$ a Schwartz function on $\R^{2n}$, too. We thus obtain for $f\in \L^2(M)$ the equality
\begin{multline}
\Gamma_\beta^*\big(\overline \Psi_\beta\circ \mathrm{Op}_h({e}_{\beta,k})\big)\Big(\overline{\overline { \varphi}}_\beta \overline \varphi_\alpha \mathrm{Op}_h(\overline e_{\alpha,j})((\overline{\overline \varphi}_\alpha f)\circ\gamma_\alpha^{-1})\circ\gamma_\alpha\Big)\\
= \Gamma_\beta^*\Big(\overline \Psi_\beta\circ \mathrm{Op}_h({e}_{\beta,k})\circ \mathrm{Op}_h(\overline u_{\alpha,\beta,j}) \circ \overline {\overline \Psi}_{\alpha\beta} \Big)\circ \overline {\overline {\overline \Phi}}_\beta(f),\label{eq:intermediate2424}
\end{multline}
where $\overline {\overline \Psi}_{\alpha\beta}:\L^2(\R^n)\to \L^2(\R^n)$ is the operator given by pointwise multiplication with the function $\overline {\overline \varphi}_\alpha\circ \gamma_\beta^{-1}$. Finally, we can apply the composition formula, described in Theorem \ref{thm:compositionformula}. It tells us that
\bq
\mathrm{Op}_h({e}_{\beta,k})\circ \mathrm{Op}_h(\overline u_{\alpha,\beta,j})=\mathrm{Op}_h(s_{\alpha,\beta,j,k}),\label{eq:operatorpresent}
\eq
where $s_{\alpha,\beta,j,k}\in S^{(j+k)(2\delta -1)}_{\delta}(1_{\R^{2n}})$. Moreover, Theorem \ref{thm:compositionformula} says that $s_{\alpha,\beta,j,k}$ has an asymptotic expansion in $S^{(j+k)(2\delta -1)}_{\delta}(1_{\R^{2n}})$:
\bq
s_{\alpha,\beta,j,k}\sim \sum_{l=0}^\infty s_{\alpha,\beta,j,k,l},\qquad s_{\alpha,\beta,j,k,l}\in S^{(j+k+l)(2\delta -1)}_{\delta}(1_{\R^{2n}}),\label{eq:expansion4789457839467}
\eq
where 
\bq
s_{\alpha,\beta,j,k,0}={e}_{\beta,k}\cdot ((\overline{\overline \varphi}_\beta \cdot\overline \varphi_\alpha)\circ\gamma_\beta^{-1})\cdot \Theta_{\alpha\beta}^\ast\overline e_{\alpha,j}.\label{eq:productsymb5527}
\eq
Similarly as in our first application of the composition formula after (\ref{eq:expansion333366666}), we conclude from the relations
\begin{align*}
\supp e_{\beta,k}&\subset \supp   \big((\rho_h\circ p)\cdot b \cdot\varphi_\beta\big)\circ(\gamma_\beta^{-1},(\partial\beta_\beta^{-1})^T),\\
 \supp \overline{e}_{\alpha,j}&\subset \supp   \big((\overline \rho_h\circ p)\cdot b \cdot\varphi_\alpha\big)\circ(\gamma_\alpha^{-1},(\partial\gamma_\alpha^{-1})^T)
\end{align*}
that $s_{\alpha,\beta,j,k,l}$ is compactly supported inside $$\supp  \Big(\big((\rho_h\circ p)\cdot b \cdot\varphi_\beta\big)\circ(\gamma_\beta^{-1},(\partial\beta_\beta^{-1})^T)\Big)\cap  \supp  \Big(\big((\overline \rho_h\circ p)\cdot b\cdot\varphi_\alpha\big)\circ(\gamma_\alpha^{-1},(\partial\gamma_\alpha^{-1})^T)\Big)\subset \R^{2n}$$ for each $l$ and each fixed $h\in (0,h_0]$, and consequently its support fulfills
\[
 \text{vol supp }s_{\alpha,\beta,j,k,l}\leq C_{\alpha,\beta,j,k,l} \mathrm{vol}_{\,T^*M}(\supp  \rho_h\circ p)\qquad \forall\; h\in (0,h_0]
\]
with some constant $C_{\alpha,\beta,j,k,l}>0$ that is independent of $h$. It also follows that  $s_{\alpha,\beta,j,k}$ is for each fixed $h\in (0,h_0]$ a Schwartz function on $\R^{2n}$. By (\ref{eq:expansion4789457839467}), we have for each $M\in \N$ that
\[
\mathfrak{R}_{\alpha,\beta,j,k,M}:=s_{\alpha,\beta,j,k}-\sum_{l=0}^M s_{\alpha,\beta,j,k,l}\in S^{(j+k+M+1)(2\delta -1)}_{\delta}(1_{\R^{2n}}),
\]
and Corollary \ref{cor:supercor} says that $\mathrm{Op}_h(\mathfrak{R}_{\alpha,\beta,j,k,M})$ is of trace class, with a trace norm bound for $h\in (0,h_0]$
\bq
\norm{\mathrm{Op}_h(\mathfrak{R}_{\alpha,\beta,j,k,M})}_{\mathrm{tr},\L^2(\R^n)}
\leq C_{\alpha,\beta,j,k,M}h^{(j+k+M+1)(1-2\delta)-(2n+1)\delta-n},\label{eq:goodtracenormestim33}
\eq
where $C_{\alpha,\beta,j,k,M}>0$ is independent of $h$. The fact that we need Corollary \ref{cor:supercor} here, which requires the considered symbol functions to be supported inside an $h$-independent compactum in $\R^{2n}$, is the \emph{only} reason why we need the additional assumption in this theorem that $\rho_h\in \mathcal{S}^\mathrm{bcomp}_\delta$. Collecting everything together, we get from (\ref{eq:decomp38239753892579}-\ref{eq:goodtracenormestim33})  for each $N,M\in \N$:
\begin{multline*}
B\circ \rho_h(P(h))\\
=\sum_{\stackrel{\scriptstyle \alpha,\beta \in \mathcal{A}}{\scriptstyle 0 \leq j,k \leq N}}\bigg[\Gamma_\beta^*\Big(\overline \Psi_\beta\circ \mathrm{Op}_h\Big(\sum_{\scriptstyle 0 \leq l \leq M} s_{\alpha,\beta,j,k,l}+\mathfrak{R}_{\alpha,\beta,j,k,M}\Big)\circ \overline {\overline \Psi}_{\alpha\beta} \Big)\circ \overline {\overline {\overline \Phi}}_\beta
+\mathcal{R}_{\alpha,\beta,j,k}(h)\bigg]+\mathcal{R}_{N}(h).
\end{multline*}
This is the final result of Step 3. We have transformed the leading term of (\ref{eq:decomp38239753892579}) into a more desired form that involves only pullbacks by one chart at a time. \\
{\bf Step 4.}\quad We complete the proof by setting $$\quad u_{\alpha,\beta,0}:=s_{\alpha,\beta,0,0,0},\quad u_{\alpha,\beta,M,N}:=\sum_{\stackrel{\scriptstyle 0 \leq j,k \leq N}{\scriptstyle 0 \leq l \leq M}}s_{\alpha,\beta,j,k,l}-s_{\alpha,\beta,0,0,0},$$
\bqn
\mathfrak{R}_{M,N}(h):=\sum_{\stackrel{\scriptstyle \alpha,\beta \in \mathcal{A}}{\scriptstyle 0 \leq j,k \leq N}}\Big[\Gamma_\beta^*\big(\overline \Psi_\beta\circ\mathrm{Op}_h(\mathfrak{R}_{\alpha,\beta,j,k,M})\circ \overline {\overline \Psi}_{\alpha\beta}\big)\circ \overline {\overline {\overline \Phi}}_\beta+\mathcal{R}_{\alpha,\beta,j,k}(h)\Big]+\mathcal{R}_{N}(h).
\eqn
One then has for each $M,N\in \N$
\bqn
B\circ \rho_h(P(h))=\sum_{\alpha,\beta \in \mathcal{A}}\Gamma_\beta^*\Big(\overline \Psi_\beta\circ\mathrm{Op}_h\big(u_{\alpha,\beta,0} + u_{\alpha,\beta,M,N}\big)\circ \overline {\overline \Psi}_{\alpha\beta}\Big)\circ \overline {\overline {\overline \Phi}}_\beta + \mathfrak{R}_{M,N}(h),
\eqn
where $u_{\alpha,\beta,0}\in S_{\delta}(1_{\R^{2n}})$ and $u_{\alpha,\beta,M,N}\in S^{2\delta -1}_{\delta}(1_{\R^{2n}})$ are elements of $\CT(K_\alpha\cap K_\beta\times \R^n)\subset \CT(\R^{2n})$ for each fixed $h\in (0,h_0]$, and
\bqn
\norm{\mathfrak{R}_{M,N}(h)}_{\mathrm{tr},\L^2(M)}=\mathrm{O}\Big(h^{(\min(N,M)+1)(1-2\delta)-n-(2n+1)\delta} \Big)\quad \text{as }h\to 0.
\eqn
Let $\widetilde{N}\in \N$. Since $1-2\delta>0$, we can find numbers $N(\widetilde{N}),M(\widetilde{N})\in \N$ large enough such that
\bqn
\norm{\mathfrak{R}_{M(\widetilde{N}),N(\widetilde{N})}(h)}_{\mathrm{tr},\L^2(M)}=\mathrm{O}\big(h^{\widetilde{N}}\big)\quad \text{as }h\to 0.
\eqn
Defining
\[
\mathfrak{R}_{\widetilde{N}}(h):=\mathfrak{R}_{M(\widetilde{N}),N(\widetilde{N})}(h),\qquad r_{\alpha,\beta,\widetilde{N}}:=u_{\alpha,\beta,M(\widetilde{N}),N(\widetilde{N})}\in S^{2\delta -1}_{\delta}(1_{\R^{2n}}),
\]
we arrive for arbitrary $\widetilde{N}\in \N$ at the equality
\bqn
B\circ \rho_h(P(h))=\sum_{\alpha,\beta \in \mathcal{A}}\Gamma_\beta^*\Big(\overline \Psi_\beta\circ\mathrm{Op}_h(u_{\alpha,\beta,0}+r_{\alpha,\beta,\widetilde{N}})\circ \overline {\overline \Psi}_{\alpha\beta}\Big)\circ \overline {\overline {\overline \Phi}}_\beta + \mathfrak{R}_{\widetilde{N}}(h).
\eqn
To finish the proof, recall the identities
\bqn
e_{\beta,0}=\big((\rho_h\circ p)\cdot \varphi_\beta\big)\circ(\gamma_\beta^{-1},(\partial\gamma_\beta^{-1})^T),\qquad \overline{e}_{\alpha,0}=\big((\overline{\rho}_h\circ p)\cdot \varphi_\alpha\big)\circ(\gamma_\alpha^{-1},(\partial\gamma_\alpha^{-1})^T).
\eqn
With these identities and the definition of the pullback by the function $\Theta_{\alpha\beta}\equiv \gamma_\alpha\circ\gamma_\beta^{-1}$, one computes
\begin{align*}
&{e}_{\beta,0}\cdot \Theta_{\alpha\beta}^\ast\overline e_{\alpha,0}
={e}_{\beta,0}\cdot\Big(\overline e_{\alpha,0}\circ (\Theta_{\alpha\beta},\partial \Theta_{\alpha\beta}^T)\Big)\\
&=\Big(\big((\rho_h\circ p)\cdot \varphi_\beta\big)\circ\big(\gamma_\beta^{-1},(\partial\gamma_\beta^{-1})^T\big)\Big)
\cdot\Big(\big((\overline{\rho}_h\circ p)\cdot \varphi_\alpha\big)\circ(\gamma_\alpha^{-1},(\partial\gamma_\alpha^{-1})^T)\circ (\Theta_{\alpha\beta},\partial \Theta_{\alpha\beta}^T)\Big)\\
&=\Big(\big((\rho_h\circ p)\cdot \varphi_\beta\big)\circ\big(\gamma_\beta^{-1},(\partial\gamma_\beta^{-1})^T\big)\Big)
\\&\qquad\qquad\cdot\Big(\big((\overline{\rho}_h\circ p)\cdot \varphi_\alpha\big)\circ(\gamma_\alpha^{-1},(\partial\gamma_\alpha^{-1})^T)\circ \big(\gamma_\alpha\circ\gamma_\beta^{-1},(\partial\gamma_\alpha)^T\circ(\partial\gamma_\beta^{-1})^T\big)\Big)\\
&=\big((\rho_h\circ p)\cdot(\overline \rho_h\circ p)\cdot \varphi_\alpha\cdot \varphi_\beta\big)\circ\big(\gamma_\beta^{-1},(\partial\gamma_\beta^{-1})^T\big).
\end{align*}
Taking finally into account that the functions decorated with a bar are identically $1$ on the supports of the corresponding functions without bar, it holds
\bqn
u_{\alpha,\beta,0}\equiv s_{\alpha,\beta,0,0,0}={e}_{\beta,0}\cdot ((\overline{\overline \varphi}_\beta \cdot\overline \varphi_\alpha)\circ\gamma_\beta^{-1})\cdot \Theta_{\alpha\beta}^\ast\overline e_{\alpha,0}
=\big((\rho_h\circ p)\cdot \varphi_\alpha\cdot \varphi_\beta\big)\circ\big(\gamma_\beta^{-1},(\partial\gamma_\beta^{-1})^T\big).
\eqn
In particular, since $\sum_{\alpha\in\mathcal{A}}\varphi_\alpha =1_M$, we can set $$u_{\beta,0}:=\sum_{\alpha\in\mathcal{A}}u_{\alpha,\beta,0}=\big((\rho_h\circ p)\cdot \varphi_\beta\big)\circ\big(\gamma_\beta^{-1},(\partial\gamma_\beta^{-1})^T\big)$$
which finally yields
\[
\sum_{\alpha,\beta \in \mathcal{A}}\Gamma_\beta^*\Big(\overline \Psi_\beta\circ\mathrm{Op}_h(u_{\alpha,\beta,0})\circ \overline {\overline \Psi}_{\alpha\beta}\Big)\circ \overline {\overline {\overline \Phi}}_\beta=\sum_{\beta \in \mathcal{A}}\Gamma_\beta^*\Big(\overline \Psi_\beta\circ\mathrm{Op}_h(u_{\beta,0})\Big)\circ \overline {\overline {\overline \Phi}}_\beta.
\]
\end{proof}
From the previous theorem one immediately deduces
\begin{cor}[Semiclassical trace formula for Schr\"odinger operators]\label{thm:semiclassicaltraceformula}In the situation of the previous theorem, one has in the semiclassical limit $h\to 0$
\begin{multline}
\mathrm{tr}\,_{\L^2(M)}\big[B\circ \rho_h(P(h))\big]\\
=\frac{1}{(2\pi h)^n}\intop_{T^*M}b\cdot(\rho_h\circ p)\d(T^*M)\;+\;\mathrm{O}\Big(h^{1-2\delta-n}\mathrm{vol}_{\,T^*M}\big[\supp \big(b\cdot (\rho_h\circ p)\big)\big]\Big).\label{eq:result474747433}
\end{multline}
\end{cor}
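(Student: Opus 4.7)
The plan is to apply Theorem \ref{thm:localtraceform} (Result 3) with $N$ chosen large enough, take the trace of both sides, and then analyze each summand separately. The leading term will produce the symplectic integral, while the remainder symbols $r_{\alpha,\beta,N}$ will contribute the error of order $h^{1-2\delta-n}\mathrm{vol}_{\,T^*M}[\supp(b\cdot (\rho_h\circ p))]$, and $\mathfrak{R}_N(h)$ will be negligible if $N$ is chosen sufficiently large (for instance $N\geq n+1$ suffices since $\delta<\tfrac12$).

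For the leading term, I would write, for each $\alpha\in\mathcal{A}$,
\bqn
\tr_{\L^2(M)}\Big(\overline\varphi_\alpha\cdot\mathrm{Op}_h(u_{\alpha,0})\big((\cdot\,\overline{\overline{\overline \varphi}}_\alpha)\circ\gamma_\alpha^{-1}\big)\circ\gamma_\alpha\Big)
=\tr_{\L^2(\R^n)}\big(\overline\Psi_\alpha\circ \mathrm{Op}_h(u_{\alpha,0})\circ \overline{\overline{\overline\Psi}}_\alpha\big),
\eqn
where the equality is justified by the standard identification of the operator on $M$ as the pullback of a compactly-supported operator on $\R^n$ and a Schwartz-kernel calculation (with the Riemannian volume density cancelling against its inverse; this is the point that needs to be verified carefully). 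Since $\overline\varphi_\alpha$ and $\overline{\overline{\overline\varphi}}_\alpha$ are identically $1$ on $\supp\varphi_\alpha$, which contains the $y$-projection of $\supp u_{\alpha,0}$, by the cyclic property of the trace together with formula (\ref{eq:trace11111111}) this equals $(2\pi h)^{-n}\int_{\R^{2n}} u_{\alpha,0}(y,\eta,h)\d y\d \eta$. Recognizing that the local symplectic density $dy\wedge d\eta$ on $T^*U_\alpha$ corresponds under the chart $\gamma_\alpha$ to $d(T^*M)$, and using the definition of $u_{\alpha,0}$, this integral equals $(2\pi h)^{-n}\int_{T^*M} (\rho_h\circ p)\cdot b\cdot\varphi_\alpha\d(T^*M)$. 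Summing over $\alpha$ and using $\sum_\alpha \varphi_\alpha \equiv 1$ on $M$ yields the main term of (\ref{eq:result474747433}).

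For the remainder symbols, I would bound each $\tr\mathrm{Op}_h(r_{\alpha,\beta,N})$-type contribution directly via the absolute-value version of (\ref{eq:trace11111111}): since $r_{\alpha,\beta,N}\in S^{2\delta-1}_{\delta}(1_{\R^{2n}})$ satisfies the pointwise bound $|r_{\alpha,\beta,N}(v,h)|\leq C\,h^{1-2\delta}$, and its support is contained in $\supp((\rho_h\circ p)\cdot b\cdot\varphi_\alpha)\circ(\gamma_\alpha^{-1},(\partial\gamma_\alpha^{-1})^T)$ by (\ref{eq:wts798978}), we get
\bqn
\Big|\tr_{\L^2(M)}\Big(\overline\varphi_\beta\cdot \mathrm{Op}_h(r_{\alpha,\beta,N})(\cdots)\circ\gamma_\beta\Big)\Big|\leq \frac{C}{(2\pi h)^n}\,h^{1-2\delta}\,\mathrm{vol}_{\,T^*M}\big[\supp(b\cdot (\rho_h\circ p))\big],
\eqn
after again pulling back the chart-coordinate integral to an invariant one on $T^*M$. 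Finally, $|\tr\mathfrak{R}_N(h)|\leq\|\mathfrak{R}_N(h)\|_{\tr,\L^2(M)}=\mathrm{O}(h^N)$ by (\ref{eq:tracenormestim343434}), which is absorbed into the claimed remainder for $N$ large.

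The main obstacle I anticipate is the bookkeeping in the first paragraph: translating the trace on $\L^2(M)$ of the pullback operator into the flat trace on $\L^2(\R^n)$ while ensuring that (i) the cutoffs $\overline\varphi_\alpha,\overline{\overline{\overline\varphi}}_\alpha$ effectively disappear (they multiply a symbol already supported where they are identically one), and (ii) the Riemannian density factors entering from the chart pullback cancel so that the resulting $\R^{2n}$ Lebesgue integral of $u_{\alpha,0}$ coincides with the symplectic integral over $T^*U_\alpha$. This is a standard invariance check for semiclassical pseudodifferential operators on manifolds, and it is the only delicate point; once it is settled, the rest is direct symbol-calculus estimation.
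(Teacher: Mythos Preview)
Your proposal is correct and follows essentially the same route as the paper: apply Theorem \ref{thm:localtraceform}, take traces termwise, compute the leading term as $(2\pi h)^{-n}\int_{\R^{2n}} u_{\alpha,0}$, sum via the partition of unity, and bound the $r_{\alpha,\beta,N}$ contributions using $\|r_{\alpha,\beta,N}\|_\infty=\mathrm{O}(h^{1-2\delta})$ together with the support inclusion (\ref{eq:wts798978}). The only cosmetic difference is that the paper writes down the Schwartz kernels $K_{A_\alpha}$ and $K_{A_{\alpha,\beta,N}}$ explicitly and integrates them on the diagonal of $M$, whereas you pass to the trace on $\L^2(\R^n)$ via the pullback and invoke (\ref{eq:trace11111111}); both paths lead to the same integral $(2\pi h)^{-n}\int_{\R^{2n}} u_{\alpha,0}\,dy\,d\eta$, and the density-cancellation you flag as the ``main obstacle'' is exactly what the paper handles by observing that the factor $(\mathrm{Vol}_{g_\alpha})^{-1}$ in the kernel cancels against $\mathrm{Vol}_{g_\alpha}\,dy$ when integrating with respect to $dM$.
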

\begin{rem}
If $\mathrm{vol}_{\,T^*M}\big[\supp \big(b\cdot (\rho_h\circ p)\big)\big]\neq 0$, i.e.\ in all non-trivial cases, we can divide both sides of (\ref{eq:result474747433}) by $\mathrm{vol}_{\,T^*M}\big[\supp \big(b\cdot (\rho_h\circ p)\big)\big]$ to obtain the equivalent statement
\[
(2\pi h)^n\frac{\mathrm{tr}\,_{\L^2(M)}\big[B\circ\rho_h(P(h))\big]}{\mathrm{vol}_{\,T^*M}\big[\supp \big(b\cdot (\rho_h\circ p)\big)\big]}\\
=\fintop_{\text{supp }b\cdot(\rho_h\circ p)}b\cdot(\rho_h\circ p)\d(T^*M)\;+\;\mathrm{O}\big(h^{1-2\delta}\big)\qquad\text{as }h\to 0
\]
in which the distinction between the leading term and the remainder term is emphasized more.
\end{rem}
\begin{proof} For convenience of the reader, we give the short proof which involves only standard arguments. By Theorem \ref{thm:localtraceform}, there is a number $h_0\in(0,1]$ and for each $N\in \N$ a collection of symbol functions $\{r_{\alpha,\beta,N}\}_{\alpha,\beta\in \mathcal{A}}\subset S^{2\delta-1}_{\delta}(1_{\R^{2n}})$ and an operator $\mathfrak{R}_{N}(h)\in \B(\L^2(M))$  such that for $h\in (0,h_0]$
\begin{multline*}
B\circ \rho_h(P(h))(f)=\sum_{\alpha \in \mathcal{A}}\overline \varphi_\alpha\cdot\mathrm{Op}_h(u_{\alpha,0})\big((f\cdot \overline{\overline {\overline \varphi}}_\alpha)\circ \gamma_\alpha^{-1}\big)\circ\gamma_\alpha\\
+\sum_{\alpha,\beta \in \mathcal{A}}\overline \varphi_\beta\cdot\mathrm{Op}_h(r_{\alpha,\beta,N})\big((f\cdot \overline{\overline \varphi}_\alpha\cdot\overline {\overline {\overline \varphi}}_\beta)\circ\gamma_\beta^{-1}\big)\circ\gamma_\beta  + \mathfrak{R}_{N}(h)(f)\qquad\forall\; f\in \L^2(M),
\end{multline*}
where $u_{\alpha,0}=\big((\rho_h\circ p)\cdot b\cdot \varphi_\alpha\big)\circ(\gamma_\alpha^{-1},(\partial\gamma_\alpha^{-1})^T)$. Moreover, the operator $\mathfrak{R}_N(h)\in \B(\L^2(M))$ is of trace class and its trace norm is of order $h^{N}$ as $h\to 0$, while for fixed $h\in (0,h_0]$, each symbol function $r_{\alpha,\beta,N}$ is an element of $\CT(\R^{2n})$ that fulfills 
\bq
\text{vol supp } r_{\alpha,\beta,N}\leq C_{\alpha,\beta,N}\, \text{vol supp }\big((\rho_h\circ p)\cdot b\big)\label{eq:newrel243366}
\eq
with a constant $C_{\alpha,\beta,N}>0$ that is independent of $h$. In particular, each of the operators $$A_\alpha:f\mapsto \overline \varphi_\alpha\cdot\mathrm{Op}_h(u_{\alpha,0})\big((f\cdot \overline{\overline {\overline \varphi}}_\alpha)\circ \gamma_\alpha^{-1}\big), \quad A_{\alpha,\beta,N}:f\mapsto \overline \varphi_\beta\cdot\mathrm{Op}_h(r_{\alpha,\beta,N})\big((f\cdot \overline{\overline \varphi}_\alpha\cdot\overline {\overline {\overline \varphi}}_\beta)\circ\gamma_\beta^{-1}\big)\circ\gamma_\beta$$ 
has a smooth, compactly supported Schwartz kernel, given by
\begin{align*}
K_{A_\alpha}(x_1,x_2)&=\frac{1}{(2\pi h)^n} \overline \varphi_\alpha(x_1)\intop_{\R^{n}}e^{\frac i h (y_1-y_2)\cdot\eta}
u_{\alpha,0}\Big(\frac{y_1+y_2}{2},\eta,h\Big)\overline{\overline {\overline \varphi}}_\alpha(\gamma_\alpha^{-1}(y_2))  \d \eta\big(\mathrm{Vol}_{g_\alpha}(y)\big)^{-1}\\
K_{A_{\alpha,\beta,N}}(x_1,x_2)&=\frac{1}{(2\pi h)^n} \overline \varphi_\beta(x_1)\intop_{\R^{n}}e^{\frac i h (y_1-y_2)\cdot\eta}
r_{\alpha,\beta,N}\Big(\frac{y_1+y_2}{2},\eta,h\Big)(\overline{\overline \varphi}_\alpha\cdot\overline {\overline {\overline \varphi}}_\beta)(\gamma_\beta^{-1}(y_2))  \d \eta\big(\mathrm{Vol}_{g_\beta}(y)\big)^{-1},
\end{align*}
where $x_1,x_2\in M$ and $y_i$ denotes $\gamma_\alpha(x_i)$ and $\gamma_\beta(x_i)$ in the first line and the second line, respectively.  We obtain for arbitrary $N\in \N$
\bqn
\mathrm{tr}\,_{\L^2(M)}\big[B\circ \rho_h(P(h))\big] = \sum_{\alpha \in \mathcal{A}}\intop_M K_{A_\alpha}(x,x)\d M(x)
+\sum_{\alpha,\beta \in \mathcal{A}}\intop_M K_{A_{\alpha,\beta,N}}(x,x)\d M(x)   + \mathrm{O}(h^N).
\eqn
Let us consider first the integrals in the second summand. Using (\ref{eq:newrel243366}) we obtain that there is a constant $C_{\alpha,\beta,N}>0$, independent of $h$, such that
\bqn
\bigg|\intop_M K_{A_{\alpha,\beta,N}}(x,x)\d M(x)\bigg| \leq C_{\alpha,\beta,N}\frac{1}{(2\pi h)^n} \norm{\overline \varphi_\beta}_\infty\norm{r_{\alpha,\beta,N}}_\infty \norm{\overline{\overline \varphi}_\alpha\cdot\overline {\overline {\overline \varphi}}_\beta}_\infty \text{vol supp }\big((\rho_h\circ p)\cdot b\big).
\eqn
As $r_{\alpha,\beta,N}$ is an element of $S_{\delta}^{2\delta-1}(1_{\R^{2n}})$, one has $\norm{r_{\alpha,\beta,N}}_\infty=\mathrm{O}(h^{1-2\delta})$, and so we conclude
\[
\bigg|\intop_M K_{A_{\alpha,\beta,N}}(x,x)\d M(x)\bigg| =\mathrm{O}\Big(h^{1-2\delta-n} \text{vol supp }\big((\rho_h\circ p)\cdot b\big)\Big)\quad \text{as }h\to 0.
\]
Since $\mathcal{A}$ is finite, it follows
\begin{align*}
\mathrm{tr}\,_{\L^2(M)}\big[B\circ \rho_h(P(h)) \big] = \sum_{\alpha \in \mathcal{A}}\intop_M K_{A_\alpha}(x,x)\d M(x)
+ \mathrm{O}\big(h^{1-2\delta-n}\text{vol supp }((\rho_h\circ p)\cdot b\big).
\end{align*}
To finish the proof, we calculate the leading term to be given by
\begin{align*}
\sum_{\alpha \in \mathcal{A}}&\intop_M K_{A_\alpha}(x,x)\d M(x)=\sum_{\alpha \in \mathcal{A}}\frac{1}{(2\pi h)^n} \intop_{\R^{2n}}\overline \varphi_\alpha(\gamma_\alpha^{-1}(y))u_{\alpha,0}(y,\eta,h)\overline{\overline {\overline \varphi}}_\alpha(\gamma_\alpha^{-1}(y))   \d \eta\d y\\
&=\frac{1}{(2\pi h)^n}\sum_{\alpha \in \mathcal{A}}\; \intop_{\R^{2n}}\overline \varphi_\alpha(\gamma_\alpha^{-1}(y))\big((\rho_h\circ p)\cdot b\cdot \varphi_\alpha\big)(\gamma_\alpha^{-1}(y),(\partial\gamma_\alpha^{-1})^T\eta,h)\overline{\overline {\overline \varphi}}_\alpha(\gamma_\alpha^{-1}(y))   \d \eta\d y\\
&=\frac{1}{(2\pi h)^n}\sum_{\alpha \in \mathcal{A}}\; \intop_{T^*M}((\rho_h\circ p)\cdot b)(x,\xi)\cdot \varphi_\alpha(x)\d (T^*M)(x,\xi)\\
&=\frac{1}{(2\pi h)^n}\intop_{T^*M}((\rho_h\circ p)\cdot b)(x,\xi)\d (T^*M)(x,\xi).
\end{align*}
\end{proof}

\bibliography{bibliography}
\bibliographystyle{amsplain}

\end{document}